\newtheorem{theorem}{Theorem}[section]
\newtheorem{proposition}[theorem]{Proposition}
\newtheorem{lemma}[theorem]{Lemma}
\theoremstyle{definition}
\theoremstyle{remark}
\newtheorem{remark}[theorem]{Remark}
\numberwithin{equation}{section}
\begin{document}

\title{Unsteady non-Newtonian fluid flows with boundary conditions of friction type: the case of shear thinning fluids.}


\author{Mahdi Boukrouche}
\address{Lyon University, F-42023 Saint-Etienne, Institut Camille Jordan CNRS UMR 5208, 23 Rue Paul Michelon, 42023 Saint-Etienne Cedex 2, France}
\email{mahdi.boukrouche@univ-st-etienne.fr}

\author{Hanene Debbiche}
\address{Lyon University, F-42023 Saint-Etienne, Institut Camille Jordan CNRS UMR 5208, 23 Rue Paul Michelon, 42023 Saint-Etienne Cedex 2, France}
\curraddr{University of Bordj bou Arr\'eridj, Algeria}
\email{hanane.debbiche@gmail.com}

\author{Laetitia Paoli}
\address{Lyon University, F-42023 Saint-Etienne, Institut Camille Jordan CNRS UMR 5208, 23 Rue Paul Michelon, 42023 Saint-Etienne Cedex 2, France}
\email{laetitia.paoli@univ-st-etienne.fr}

\subjclass[2010]{Primary: 76A05, 35Q35; Secondary:  35K87, 76M30}

\keywords{shear thinning fluids, unsteady $p$-Laplacian Stokes system, Tresca's friction boundary conditions, non-linear parabolic variational inequality, vanishing viscosity technique}



\begin{abstract}
Following the previous part of our study on unsteady non-New\-to\-nian fluid flows with boundary conditions of friction type we consider in this paper the case of pseudo-plastic (shear thinning) fluids. The problem is  described by a $p$-Laplacian non-stationary Stokes system with $p<2$ and we assume that the fluid is subjected to mixed boundary conditions, namely non-homogeneous Dirichlet boundary conditions on a part of the boundary and a slip fluid-solid interface law of friction type on another part of the boundary. Hence the fluid velocity should belong to a subspace of $L^p \bigl(0,T; (W^{1,p} (\Omega)^3) \bigr)$, where $\Omega$ is the flow domain and $T>0$, and satisfy a non-linear parabolic variational inequality. In order to solve this problem we introduce first a vanishing viscosity technique which allows us to consider an auxiliary problem formulated in $L^{p'} \bigl(0,T; (W^{1,p'} (\Omega)^3) \bigr)$ with $p' >2$ the conjugate number of $p$ and to use the existence results already established in \cite{BDP2}. Then we apply both compactness arguments and a fixed point method to prove the existence of a solution to our original fluid flow problem.
\end{abstract}

\maketitle



%
%


\section{Description of the problem} \label{description}
\bigskip

Following the previous part of our study on unsteady general incompressible fluid flows with boundary conditions of friction type (\cite{BDP2}), we focus in this paper on the case of pseudo-plastic (or shear thinning) fluids like molten polymers.
More precisely we consider non-Newtonian fluids satisfying the following power law
 \begin{eqnarray} \label{sigma}
 \sigma = 2 \mu \bigl( \theta, \upsilon, \bigl| D(\upsilon) \bigr| \bigr) \bigl|D(\upsilon) \bigr|^{p-2} D(\upsilon) - \pi {\rm Id}_{{\mathbb{R}^3}}
 \end{eqnarray}  
 with $p \in (1,2)$,  where $\mu$ is a given mapping, $\upsilon$ is the fluid velocity, $\pi$ is the pressure, $\theta$ is the temperature,  $\sigma$ is   the stress tensor  and $D(\upsilon)=\bigl(d_{ij}(\upsilon) \bigr)_{1\leq i,j\leq3}$ is the strain rate tensor given by 
\begin{eqnarray*} 
d_{ij}(\upsilon) =\frac{1}{2} \left(\frac{\partial \upsilon_{i}}{\partial x_j} +\frac{\partial \upsilon_{j}}{\partial x_i} \right) \quad 1\leq i,j\leq 3.
\end{eqnarray*}
The problem is then described by the conservation of mass and momentum i.e.
  \begin{eqnarray} \label{stokes}
\qquad \left\{ \begin{array}{ll}
\displaystyle \frac{\partial \upsilon}{\partial t}-2div \bigl( \mu \bigl(\theta , \upsilon, \bigl|D(\upsilon) \bigr| \bigr) \bigl|D(\upsilon) \bigr|^{p-2}D(\upsilon) \bigr) +\nabla \pi=f \quad \mbox{in} \ 
(0,T) \times \Omega  \\
\displaystyle  div(\upsilon)=0\qquad \mbox{in} \   (0,T) \times \Omega 
 \end{array} \right.
\end{eqnarray}
where $f$ represents the vector of external forces,  $(0,T)$ is the time interval ($T>0$) and $\Omega \subset \mathbb{R}^3$ is the fluid flow domain. 
The evolution of the temperature $\theta$ is described by the heat equation which is fully decoupled from the flow problem when the convection term is neglected. Thus $\theta$ appears as  a data in (\ref{stokes}).

\smallskip

Since the 90's several experimental studies (see for instance \cite{H.B, 84, slip, H. HERV, Amir, Soh}) have shown that shear thinning fluids may exhibit complex behaviour at the boundary like non-linear threshold slip phenomena of friction type,  which can be described by 
\begin{eqnarray*}
 \upsilon_n = 0   , \quad \upsilon_{\tau} -  s \in -  \partial  \psi_{\overline{B}_{\mathbb{R}^3}(0,k) } (\sigma_{\tau})
\end{eqnarray*}
where $k$ is a given positive threshold, $\partial \psi_{\overline{B}_{\mathbb{R}^3}(0,k) }$ is the subdifferential 
 of the indicator function of the closed ball $\overline{B}_{\mathbb{R}^3}(0,k) $,  $s$ is the sliding velocity of the wall, 
$\upsilon_n$, $\upsilon_{\tau}$ and $\sigma_{\tau}$ are the normal component of the velocity,  the tangential component of the velocity and the shear  stress respectively.  This kind of boundary condition, reminiscent of Tresca's friction for solids (\cite{h'}), had been introduced by H.Fujita in \cite{F0}, leading to an abundant literature in the case of steady or unsteady Newtonian fluid flows (see for instance \cite{F1, F2, 3, saito1, F5, F6,   Roux2, saito2, Roux1, Roux3, K.T, imane3, imane4}). The case of non-Newtonian fluids satisfying a power law with $p \not=2$ yields a very different mathematical  framework since both the velocity and the pressure are expected to belong to Banach spaces depending on $p$ and a first result is given in \cite{BDP1} for steady flows.

\smallskip


Motivated by lubrication and extrusion / injection applications,  we consider 
\begin{eqnarray*}
 \Omega=\Bigl\{(x' , x_{3}) \in \mathbb{R}^{2}\times \mathbb{R} :  \, x' \in \omega, \,  0< x_{3} < h(x')\Bigr\}
 \end{eqnarray*} 
 where $\omega$ is a non-empty  bounded  domain of $\mathbb{R}^{2}$ with a Lipschitz continuous boundary 
 and $h$ is a Lipschitz continuous function  which is bounded from above and from below by some positive real numbers. 
 
  \smallskip
 
 We denote by $u \cdot v$ the Euclidean inner product of two vectors $u$ and $v$ in $\mathbb{R}^{3}$. We decompose the boundary of $\Omega$ as $\partial \Omega = \Gamma_0 \cup \Gamma_L \cup \Gamma_1$ with 
  \begin{eqnarray*}
 \Gamma_{0} =\Bigl\{(x' , x_{3}) \in \overline{\Omega} :  \, x_{3} =0 \Bigr\}, \qquad 
  \Gamma_{1}=\Bigl\{(x' , x_{3}) \in \overline{\Omega} :  \, x_{3} = h(x') \Bigr\}
 \end{eqnarray*} 
and   $\Gamma_{L}$  the lateral part of $\partial \Omega$. 
  Let $n=(n_{1},n_{2},n_{3}) $  be the unit outward normal vector to   $\partial \Omega$ and 
 $g: \partial \Omega \to \mathbb{R}^{3}$  such that
\begin{eqnarray*}\label{mol}
\begin{array}{ll}
\displaystyle{\int_{\partial\Omega} g\cdot n\,dY= 0,\quad g=0\ \mbox{on}\  \Gamma_{1}, \quad g\neq0 \  \mbox{on}\  \Gamma_{L},\quad g\cdot n=0\  \mbox{on}\  \Gamma_0}.
\end{array}
\end{eqnarray*}
 We define the normal and tangential velocities on $\partial \Omega$ as $\upsilon_{n}=  \upsilon \cdot n$ and  $\upsilon_{\tau} = \upsilon- \upsilon_{n} n $ 
 and the  normal and tangential components  of the stress vector  are given by  
\begin{eqnarray*}
\displaystyle \sigma_n = \sum_{i,j=1}^3 \sigma_{ij} n_{j}n_{i}, \quad
\displaystyle \sigma_{\tau} =\left( \sum_{j=1}^3 \sigma_{ij} n_{j} -\sigma_{n}n_{i} \right)_{1 \le i \le 3}.
\end{eqnarray*} 

 We assume that the fluid is subjected to non-homogeneous   Dirichlet boundary conditions on $\Gamma_{1}\cup\Gamma_{L} $ and  friction boundary conditions on $\Gamma_0$, i.e.   
\begin{eqnarray}\label{7}
\upsilon=0 \ \mbox{on}\  (0,T) \times \Gamma_{1} ,\quad
\upsilon=g\xi \ \mbox{on}\  (0,T) \times \Gamma_{L} 
\end{eqnarray}
where $\xi$ is a function depending only on the time variable such that
\begin{eqnarray}\label{xi}
\xi(0)=1
 \end{eqnarray}
and 
 \begin{eqnarray}\label{8}
\upsilon_n=0\qquad \mbox{on}\  (0,T) \times \Gamma_0  \quad \hbox{\rm (slip condition)}
\end{eqnarray}
while $\upsilon_{\tau}$ is unknown on $\Gamma_0$ and satisfies Tresca's friction law, i.e.
\begin{eqnarray}\label{9}
\left .\begin{array}{l}
|\sigma_{\tau}|=k\Rightarrow\exists\lambda\geq0\quad \upsilon_{\tau}=s-\lambda \sigma_{\tau} \\
|\sigma_{\tau}|<k\Rightarrow \upsilon_{\tau}=s \\ 
\end{array}\right\}\qquad \mbox{on} \  (0,T) \times \Gamma_0 
\end{eqnarray} 
where $s$ is the sliding velocity of the lower part of the boundary and $k$ is the positive friction threshold. 

\smallskip

We complete the description of the problem with the initial condition
\begin{equation}\label{10}
\upsilon(0)=\upsilon_{0}\ \mbox{in}\  \Omega
\end{equation}
and we assume that  $\upsilon_{0}\in \bigl(W^{1,p}(\Omega) \bigr)^3$ such that
\begin{eqnarray}\label{coco}
\begin{array}{ll}
div(\upsilon_{0})=0\ \mbox{in}\ \Omega,\quad \upsilon_{0}=0\ \mbox{on}\ \Gamma_{1},\quad  \upsilon_{0}=g \ \mbox{on}\ \Gamma_{L}, \quad \upsilon_{0}\cdot n=0\ \mbox{on}\  \Gamma_0.
\end{array}
\end{eqnarray}

\bigskip

At a first glance this problem seems very similar to the case $p \ge 2$ already treated in \cite{BDP2}. Unfortunately, when $p\in (1,2)$, we have to deal with additional mathematical difficulties. Indeed we expect  
the fluid velocity to take its values in $\bigl( W^{1,p} (\Omega) \bigr)^3$. Since we consider an evolution problem we need a Gelfand triplet with some pivot space between $\bigl( W^{1,p} (\Omega) \bigr)^3$ and its dual. 
Moreover the proof strategy in \cite{BDP2} relies on a finite difference approximation of the evolution term and monotonicity methods to solve the corresponding  elliptic inequalities in a functional framework of the form ${\mathcal K}^p \subset {\mathcal H} = {\mathcal H}  ' \subset ({\mathcal K}^p)'$ where ${\mathcal K}^p $ and ${\mathcal H}$ are two Hilbert spaces such that the unknown velocity ${\overline{\upsilon}}= \upsilon - \upsilon_0 \xi $ belongs to $ {\mathcal K}^p$ and $ {\mathcal K}^p$ is a subset of $ L^p \bigl( 0,T; ( W^{1,p} (\Omega) )^3 \bigr)$. Of course when $p \in (1,2)$ we can not obtain such embeddings. In order to overcome this  difficulty we will apply a vanishing viscosity technique, namely we will consider a sequence of  approximate problems  where the stress tensor is now given by 
\begin{eqnarray} \label{sigma-epsilon}
 \sigma^{\varepsilon} = \sigma + 2 \varepsilon \bigl|D(\overline{\upsilon}) \bigr|^{p'-2} D(\overline{\upsilon}) 
 \end{eqnarray}  
where $0< \varepsilon <<1$ and $\displaystyle p' = \frac{p}{p-1} >2$ is the conjugate number of $p$.


\bigskip

The paper is organized as follows. In Section \ref{framework} we describe   the mathematical framework and we derive the variational formulation of the problem. In Section \ref{approximate-problems} we introduce an auxiliary flow problem where the two first arguments of the mapping $\mu$ are given data and 
we consider 
the modified constitutive law given by (\ref{sigma-epsilon}).
For any $\varepsilon >0$ the existence results obtained in \cite{BDP2} can be applied leading to a sequence of approximate solutions 
 and we establish a priori estimates.
Then we pass to the limit as $\varepsilon$ tends to zero by using monotonicity properties and we obtain the existence and uniqueness of a solution to the auxiliary flow problem. Finally in Section \ref{fixed_point} we apply a fixed point technique to prove the existence of a solution to our original problem.


\bigskip

 
 \section{Mathematical framework} \label{framework}

 \bigskip
 
 We adopt the same notations as in \cite{BDP2}. 
 Throughout this paper we will denote by  $ \textbf{X} $  the functional space $ X^3 $. 
  For all $p>1$ we introduce  the following  subspaces   of $\mathbf{W}^{1,p}(\Omega)$: 
\begin{eqnarray*}
V^p_{0}=\bigl\{\varphi\in \mathbf{W}^{1,p}(\Omega);\ \  \varphi=0\ \mbox{on}\  \Gamma_{1}\cup\Gamma_{L}\ \mbox{and}\ \varphi\cdot n=0\ \mbox{on}\ \Gamma_0 \bigr\}
\end{eqnarray*}
and 
\begin{eqnarray*}
V^p_{0.div}=\bigl\{\varphi\in V^p_{0} ; \ \ div(\varphi)=0\  \mbox{in}\ \Omega \bigr\}, \quad
V^p_{\Gamma_{1}}=\bigl\{\varphi\in \mathbf{W}^{1,p}(\Omega);\ \ \varphi=0\ \mbox{on}\ \Gamma_{1}\bigr\}
\end{eqnarray*}
endowed with the norm
 \begin{eqnarray*} 
  \Vert\upsilon\Vert_{1.p}=\Big(\int_{\Omega}|\nabla\upsilon|^{p}\,dx\Big)^{\frac{1}{p}}.
  \end{eqnarray*}
  We observe that the mapping $z \mapsto z^p$ is convex on $\mathbb{R}^+_*$, thus
  \begin{eqnarray} \label{kornbis}
  \left( \int_{\Omega} \bigl| D(u) \bigr|^p \, dx \right)^{1/p} = \Vert D(u) \Vert_{(L^p(\Omega))^{3 \times 3}}  \le \Vert u \Vert_{1.p} \quad \forall u \in \mathbf{W}^{1,p}(\Omega).
  \end{eqnarray}
and, with Korn's inequality (\cite{Korn's}), there exists $C_{Korn, p}>0$ such that
\begin{eqnarray} \label{korn}
  \left( \int_{\Omega} \bigl| D(u) \bigr|^p \, dx \right)^{1/p} = \Vert D(u) \Vert_{(L^p(\Omega))^{3 \times 3}} \ge C_{Korn, p} \Vert u \Vert_{1.p} \quad \forall u \in V^p_{\Gamma_{1}}.
  \end{eqnarray}
 
 Let ${\mathcal Y} = \bigl\{ \psi \in {\bf L}^2(\Omega); \ div(\psi) \in L^2(\Omega) \bigr\}$ endowed with its canonical norm
 \begin{eqnarray*}
 \Vert \psi \Vert_{\mathcal Y} = \Bigl( \Vert \psi \Vert^2_{{\bf L}^2(\Omega)} + \Vert div(\psi) \Vert^2_{{ L}^2(\Omega)} \Bigr)^{1/2} \qquad \forall \psi \in {\mathcal Y} 
 \end{eqnarray*}
 and let $H$ its subspace given by 
 $H= \bigl\{ \psi \in {\bf L}^2(\Omega); \ div(\psi) = 0 \ {\rm in } \ \Omega, \ \psi \cdot n = 0 \ {\rm on } \ \partial \Omega \bigr\}$.

\smallskip

Starting from the constitutive power law  (\ref{sigma})  we introduce the mapping 
$\mathcal{F}:  \mathbb{R}\times\mathbb{R}^{3}\times\mathbb{R}^{3\times3}\rightarrow \mathbb{R}^{3\times3}$  
given by 
\begin{eqnarray*}\label{F}
\left\{ \begin{array}{ll}
\displaystyle \mathcal{F}(\lambda_{0}, \lambda_{1},\lambda_{2})= 2\mu(\lambda_{0} ,\lambda_{1},|\lambda_{2}
|)|\lambda_{2}|^{p-2}\lambda_{2} \quad \hbox{\rm if }  \lambda_2 \not = 0_{\mathbb{R}^{3\times3}},  
\\
\displaystyle 
 \mathcal{F}(\lambda_{0}, \lambda_{1},\lambda_{2})= 0_{\mathbb{R}^{3\times3}} \quad {\rm otherwise}
\end{array} \right.
\end{eqnarray*} 
and we assume that 
 \begin{eqnarray}\label{rop}
  (o,e,d)\mapsto\mu (o,e,d)\quad \mbox{is continuous on } 
  \mathbb{R}\times\mathbb{R}^{3}\times \mathbb{R}_{+},
\end{eqnarray}
 \begin{eqnarray}\label{m5}
 d\mapsto \mu(.,.,d) \quad \mbox{is monotone  increasing   on } 
 \mathbb{R_{+}},
\end{eqnarray} 
\begin{eqnarray}\label{mlo}
\begin{array}{ll}
\displaystyle \hbox{\rm there exists  $(\mu_0, \mu_1) \in {\mathbb R}^2$ such that } \ 
\\
\displaystyle  0<\mu_{0}\leq\mu(o,e,d)\leq\mu_{1}
\quad \hbox{\rm for all  $(o,e,d)\in \mathbb{R}\times\mathbb{R}^{3}\times\mathbb{R}_{+}$}.
\end{array}
\end{eqnarray}
 With  (\ref{mlo}) we have immediately 
\begin{eqnarray}\label{F1}
  \bigl| \mathcal{F}(\lambda_{0}, \lambda_{1},\lambda_{2}) \bigr| \leq 2\mu_{1}|\lambda_{2}|^{p-1}  
  \quad \forall (\lambda_{0}, \lambda_{1},\lambda_{2})\in 
  \mathbb{R}\times\mathbb{R}^{3}\times\mathbb{R}^{3\times3}.
\end{eqnarray}
Let $q>1$ such that $q-p+1>0$. Then for any $\theta \in L^{\tilde q} \bigl( 0,T; L^{\tilde p} (\Omega) \bigr)$ with $\tilde q \ge 1$ and  $\tilde p \ge 1$, and for any $u \in L^q \bigl( 0,T; \mathbf{W}^{1,p}(\Omega) \bigr)$ we have
\begin{eqnarray*}
\mathcal{F} \bigl( \theta, u + \upsilon_0 \xi, D( u + \upsilon_0 \xi) \bigr) \in L^{\frac{q}{p-1}}\bigl( 0,T; \bigl(L^{p'}(\Omega)\bigr)^{3 \times 3} \bigr)
\end{eqnarray*}
where $\displaystyle p' = \frac{p}{p-1}$ is the conjugate number of $p$ and  we may define the integral term $\displaystyle \int_0^T \int_{\Omega} \mathcal{F} \bigl( \theta, u + \upsilon_0 \xi, D( u + \upsilon_0 \xi) \bigr) : D( \overline{\varphi}) \, dx dt  $ for all $\overline{\varphi} \in L^{\frac{q}{q-p+1}} \bigl( 0,T;  {\bf W}^{1,p} (\Omega)  \bigr)$. 

\smallskip

Since $L^q \bigl( 0,T;  V^p_{0} \bigr) \subset L^{\frac{q}{q-p+1}} \bigl( 0,T;  V^p_{0} \bigr)$ if and only  if $q \ge p$, we consider the operator $\mathcal{A}: L^{q} \bigl(0,T;V^p_{0} \bigr)\rightarrow  \bigl( L^{q} \bigl(0,T;V^p_{0}\bigr) \bigr)'$  defined by
\begin{eqnarray*}
\bigl[\mathcal{A} (u) ,\overline{\varphi} \bigr]
= \int_0^T \int_{\Omega} \mathcal{F} \bigl( \theta, u + \upsilon_0 \xi, D( u + \upsilon_0 \xi) \bigr) : D( \overline{\varphi}) \, dx dt 
\qquad \forall \overline{\varphi}\in L^{q} \bigl(0,T;V^p_{0} \bigr) 
\end{eqnarray*}
with $q \ge p$, where   $[.,.]$ denotes  the duality product
between  $L^{q } \bigl(0,T;V^p_{0} \bigr)$ and  its dual $ \bigl( L^{q} \bigl(0,T;V^p_{0})  \bigr)'$.  

\smallskip

With (\ref{F1})  we have 
\begin{eqnarray} \label{eq17ante}
\Vert \mathcal{A} (u)  \Vert_{ \bigl( L^{q} (0,T;V^p_{0})  \bigr)'} \le 2 \mu_1  \Vert u + \upsilon_0 \xi \Vert_{  L^{q} (0,T;V^p_{\Gamma_1}) }^{p-1}  \quad \forall u \in L^{q } \bigl(0,T;V^p_{0} \bigr)
\end{eqnarray}
and thus ${\mathcal A}$ is a bounded operator.
Furthermore
\begin{eqnarray}\label{3.2.1}
\begin{array}{ll}
\displaystyle \bigl[\mathcal{A}( u),u - \hat u   \bigr]  & \geq   \displaystyle 2(C_{Korn, p})^p\mu_{0}\Big|\Vert u\Vert_{L^p(0,T;V_{0}^p)} - \Vert\upsilon_{0}\xi \Vert_{L^p(0,T;V_{\Gamma_{1}}^p) }\Big|^p  \\
& \displaystyle  - 2\mu_{1} \Bigl( \Vert u\Vert_{L^q(0,T;V_{0}^p)} + \Vert\upsilon_{0}\xi \Vert_{L^q(0,T;V_{\Gamma_{1}}^p) } \Bigr)^{p-1}  
\Vert \upsilon_{0}\xi + \hat u \Vert_{L^{\frac{q}{q-p+1}} (0,T;V_{\Gamma_{1}}^p)}
\end{array}
\end{eqnarray}
for any $(u, \hat  u)  \in  \bigl( L^{q} \bigl(0,T;V_{0}^p \bigr) \bigr)^2$ and we can not infer  that ${\mathcal A}$ is coercive unless $p=q$.  In such a case (i.e. $p=q$) we may expect $\displaystyle \frac{\partial \overline{\upsilon}}{\partial t} \in L^{p'} \bigl(0,T; (V_{0.div}^p)' \bigr)$. Moreover the embedding of $V_{0.div}^p $ into $H$ is continuous and dense if and only if  $\displaystyle p \ge \frac{6}{5}$ and the functional space 
\begin{eqnarray*}
\Bigl\{ {\overline{\varphi}} \in L^p \bigl(0,T;V_{0.div}^p \bigr) ; \  \frac{\partial \overline{\varphi}}{\partial t} \in L^{p'} \bigl(0,T; (V_{0.div}^p)'  \bigr) \Bigr\}
\end{eqnarray*}
is continuously embedded into $C^0 \bigl([0,T]; H \bigr)$.
Hence we will consider from now on $\displaystyle q=p \ge \frac{6}{5}$.  

\bigskip

In order to deal with homogeneous boundary conditions on $(0,T) \times (\Gamma_{1}\cup\Gamma_{L}) $, we set $\overline{\upsilon}=\upsilon-\upsilon_{0}\xi$.
The variational formulation  of the problem (\ref{sigma})-(\ref{stokes}) with the boundary conditions (\ref{7})-(\ref{9}) and the initial condition (\ref{10})   is given by

\bigskip

\noindent {\bf{Problem (P)}} 
Let     $f\in  L^ {p'} \bigl(0,T;\textbf{L}^2(\Omega) \bigr)$, $k \in L^{p'} \bigl(0,T;L_{+}^{p'}(\Gamma_0) \bigr)$, $\mu$ satisfying (\ref{rop})-(\ref{mlo}), 
 $\theta \in  L^{\tilde q}\bigl(0,T;L^{\tilde p}(\Omega)\bigr)$ with $\tilde q \ge 1$ and $\tilde p \ge 1$, $s\in L^{ p} \bigl(0,T;\textbf{L}^p(\Gamma_0)\bigr)$, $\xi\in W^{1,p'}(0,T)$ satisfying (\ref{xi})
and  $\upsilon_{0}\in \textbf{W}^{1,p}(\Omega)$ satisfying  (\ref{coco}). 
 Find  $\overline{\upsilon}\in C \bigl([0,T];\mathbf{L}^2(\Omega) \bigr)\cap L^{ p} \bigl(0,T;V^p_{0.div} \bigr)$  with  $\displaystyle \frac{\partial \overline{\upsilon}}{\partial t} \in L^{p'} \bigl(0,T; (V^p_{0.div})' \bigr)$  and  $\pi \in H^{-1}\bigl(0,T;L_{0}^{p'}(\Omega) \bigr)$ satisfying the following parabolic variational inequality
\begin{eqnarray*}\label{3.14.0}
\begin{array}{ll}
\displaystyle{ \Big<\frac{\partial}{\partial t}(\overline{\upsilon} , \tilde\vartheta )_{\bf{L}^2(\Omega)},\zeta\Big>_{\mathcal{D}'(0,T),\mathcal{D}(0,T)}
+ \int_{0}^{T} \int_{\Omega} \mathcal{F} \bigl( \theta, \overline{\upsilon} + \upsilon_0 \xi, D( \overline{\upsilon} + \upsilon_0 \xi) \bigr) : D( \tilde\vartheta) \zeta \, dx dt}
\\
\displaystyle{ 
-  \Big< \int_{\Omega} \pi \ div(\tilde \vartheta ), \zeta\Big>_{\mathcal{D}'(0,T),\mathcal{D}(0,T)} 
+ J( \overline{\upsilon}+\tilde\vartheta \zeta)-J(\overline{\upsilon})
} 
\medskip\\
\displaystyle{
\geq\,\, \int_{0}^{T} \left( f + \frac{\partial \xi}{\partial t} \upsilon_0, \tilde\vartheta \right)_{{\bf L}^2(\Omega)}\zeta\,dt} 
\qquad \quad  \forall \tilde\vartheta\in V^p_{0},  \  \forall \zeta\in \mathcal{D}(0,T)  
 \end{array}
 \end{eqnarray*}
and the initial condition
 \begin{eqnarray*}\label{3.15}
\overline{\upsilon}(0)=\upsilon_{0}-\upsilon_{0}\xi(0)=0 \qquad  \mbox{in}\  \Omega
\end{eqnarray*}
where
\begin{eqnarray*}
J :\left \{
\begin{array}{ll}
L^p ( 0,T; V^p_{0} )&\rightarrow \mathbb{R}\\
\qquad \qquad \overline{\varphi}& \mapsto \displaystyle{\int_0^T \int _{\Gamma_0}k|\overline{\varphi} -\tilde s|\,dx' dt }, \qquad \tilde s=s-(\upsilon_{0})_{\tau}\xi
\end{array} \right.
\end{eqnarray*}
and  $\big<.,.\big>_{\mathcal{D}'(0,T),\mathcal{D}(0,T)}$ and $(.,.)_{{\bf L}^2(\Omega) }$  denote respectively  the duality product between $\mathcal{D}(0,T)$ and $\mathcal{D}'(0,T)$
and     the inner product in $ {\bf L}^2(\Omega) $.

\bigskip


\section{Auxiliary flow problem } \label{approximate-problems}

\bigskip

We consider in this section an auxiliary  problem where the mapping $\mu$ depends on the modulus of the strain tensor while its two other arguments are given data. More precisely let 
${\bf u} \in L^p \bigl( 0,T; {\bf L}^p (\Omega) \bigr)$ be given. We consider the following auxiliary flow problem

\bigskip

\noindent {\bf{Problem (P${}_{\bf u}$)}} 
Let     $f\in  L^ {p'} \bigl(0,T;\textbf{L}^2(\Omega) \bigr)$, $k \in L^{p'} \bigl(0,T;L_{+}^{p'}(\Gamma_0) \bigr)$, $\mu$ satisfying (\ref{rop})-(\ref{mlo}), 
 $\theta \in  L^{\tilde q}\bigl(0,T;L^{\tilde p}(\Omega)\bigr)$ with $\tilde q \ge 1$ and $\tilde p \ge 1$, $s\in L^{ p} \bigl(0,T;\textbf{L}^p(\Gamma_0)\bigr)$, $\xi\in W^{1,p'}(0,T)$ satisfying (\ref{xi})
and  $\upsilon_{0}\in \textbf{W}^{1,p}(\Omega)$ satisfying  (\ref{coco}). 
 Find  $\overline{\upsilon}\in C \bigl([0,T];\mathbf{L}^2(\Omega) \bigr)\cap L^{ p} \bigl(0,T;V^p_{0.div} \bigr)$  with  $\displaystyle \frac{\partial \overline{\upsilon}}{\partial t} \in L^{p'} \bigl(0,T; (V^p_{0.div})' \bigr)$  and  $\pi \in H^{-1}\bigl(0,T;L_{0}^{p'}(\Omega) \bigr)$ satisfying the following parabolic variational inequality
\begin{eqnarray*}\label{3.14.00}
\begin{array}{ll}
\displaystyle{ \Big<\frac{\partial}{\partial t}(\overline{\upsilon} , \tilde\vartheta )_{\bf{L}^2(\Omega)},\zeta\Big>_{\mathcal{D}'(0,T),\mathcal{D}(0,T)}
+ \int_{0}^{T} \int_{\Omega} \mathcal{F} \bigl( \theta, {\bf u} + \upsilon_0 \xi, D( \overline{\upsilon} + \upsilon_0 \xi) \bigr) : D( \tilde\vartheta) \zeta \, dx dt}
\\
\displaystyle{
-  \Big< \int_{\Omega} \pi \ div(\tilde \vartheta ), \zeta\Big>_{\mathcal{D}'(0,T),\mathcal{D}(0,T)} 
+ J( \overline{\upsilon}+\tilde\vartheta \zeta)-J(\overline{\upsilon})
} 
\medskip\\
\displaystyle{
\geq\,\, \int_{0}^{T} \left( f + \frac{\partial \xi}{\partial t} \upsilon_0, \tilde\vartheta \right)_{{\bf L}^2(\Omega)}\zeta\,dt} 
\qquad \quad  \forall \tilde\vartheta\in V^p_{0},  \  \forall \zeta\in \mathcal{D}(0,T)  
 \end{array}
 \end{eqnarray*}
and the initial condition
 \begin{eqnarray*}\label{3.15}
\overline{\upsilon}(0)=\upsilon_{0}-\upsilon_{0}\xi(0)=0 \qquad  \mbox{in}\  \Omega .
\end{eqnarray*}

 By Lemma 1 in \cite{BDP1} we know that the mapping $\lambda_2 \mapsto {\mathcal F} (\cdot, \cdot, \lambda_2)$ is monotone in ${\mathbb R}^{3 \times 3}$ for any $p>1$. So the mathematical framework seems the same as in Section 3 in \cite{BDP2} where we consider a similar problem in  the case $p \ge 2$. Unfortunately  the space 
$L^p \bigl( 0,T;  V^p_{0.div} \bigr)$  is not embedded into its dual when $\displaystyle p \in \left[6/5, 2 \right)$ and 
some key properties  of the semi-group of contractions $\bigl( S(h) \bigr)_{h \ge 0}$ used in \cite{BDP2} (see Proposition 3.1 in \cite{BDP2}), are not any more satisfied. So 
we can not reproduce the same proof strategy as in \cite{BDP2}.

\bigskip

In order to overcome this difficulty we will apply a vanishing viscosity technique and we introduce a pertubed constitutive law given by 
 \begin{eqnarray*} \label{sigma_epsilon}
 \begin{array}{ll}
\displaystyle  \sigma^{\varepsilon} = 2 \mu \bigl( \theta,  {\bf u} + \upsilon_0 \xi , \bigl| D( \overline{\upsilon} + \upsilon_0 \xi) \bigr| \bigr) \bigl|D( \overline{\upsilon} + \upsilon_0 \xi) \bigr|^{p-2} D( \overline{\upsilon} + \upsilon_0 \xi)
  \\
  \displaystyle
  + 2 \varepsilon \bigl|D( \overline{\upsilon} ) \bigr|^{p'-2} D( \overline{\upsilon} )
 - \pi {\rm Id}_{{\mathbb{R}^3}}
 \end{array}
 \end{eqnarray*}
%
 where we recall that $\displaystyle p'= \frac{p}{p-1} >2$ is the conjugate number of $p$. Hence we consider  the following  approximate  variational inequality
 \begin{eqnarray*}\label{3.14.000}
\begin{array}{ll}
\displaystyle{ \Big<\frac{\partial}{\partial t}(\overline{\upsilon}_{\varepsilon} , \tilde\vartheta )_{\bf{L}^2(\Omega)},\zeta\Big>_{\mathcal{D}'(0,T),\mathcal{D}(0,T)}
+ \int_{0}^{T} \int_{\Omega} \mathcal{F} \bigl( \theta, {\bf u} + \upsilon_0 \xi, D( \overline{\upsilon}_{\varepsilon} + \upsilon_0 \xi) \bigr) : D( \tilde\vartheta) \zeta \, dx dt } 
\medskip \\
\displaystyle{
+ 2 \varepsilon \int_0^T \int_{\Omega} \bigl| D(\overline{\upsilon}_{\varepsilon} )  \bigr|^{p'-2} D(\overline{\upsilon}_{\varepsilon} )  : D( \tilde\vartheta) \zeta \, dx dt
-  \Big< \int_{\Omega} \pi_{\varepsilon} \ div(\tilde \vartheta ) \, dx , \zeta\Big>_{\mathcal{D}'(0,T),\mathcal{D}(0,T)} } 
\medskip\\
\displaystyle{
+ J( \overline{\upsilon}_{\varepsilon} +\tilde\vartheta \zeta)-J(\overline{\upsilon}_{\varepsilon} )
\geq\,\, \int_{0}^{T} \left( f + \frac{\partial \xi}{\partial t} \upsilon_0, \tilde\vartheta \right)_{{\bf L}^2(\Omega)}\zeta\,dt} 
\qquad \quad  \forall \tilde\vartheta\in V^{p'}_{0},  \  \forall \zeta\in \mathcal{D}(0,T).  
 \end{array}
 \end{eqnarray*}


\bigskip

Let  $ u \in L^{p'} ( 0,T;  V^{p'}_{0.div} )$. 
For the sake of notational simplicity let us define $\mathcal{A}_{{\bf u}}^{\varepsilon} (u)$  by 
\begin{eqnarray*}
\begin{array}{ll}
\displaystyle \bigl[ \bigl[\mathcal{A}_{{\bf u}}^{\varepsilon} (u) ,\overline{\varphi} \bigr]\bigr]
= \int_0^T \int_{\Omega} \mathcal{F} \bigl( \theta, {\bf u} + \upsilon_0 \xi, D( u + \upsilon_0 \xi) \bigr) : D( \overline{\varphi}) \, dx dt  \\
\displaystyle 
+  2 \varepsilon \int_0^T \int_{\Omega}  \bigl|D(u ) \bigr|^{p'-2} D(u  ) : D( \overline{\varphi}) \, dx dt 
\qquad \forall \overline{\varphi}\in L^{p'}(0,T;V^{p'}_{0.div}) 
\end{array}
\end{eqnarray*}
where   $[[.,.]]$ denotes  the duality product
between  $L^{p' }(0,T;V^{p'}_{0.div})$ and  its dual $ \bigl( L^{p'}(0,T;V^{p'}_{0.div}) \bigr)'$.  By observing that 
$L^{p' }(0,T;V^{p'}_{0.div}) \subset L^{p }(0,T;V^{p}_{0.div})$ we obtain immediately with  (\ref{eq17ante}) and (\ref{3.2.1})  that $\mathcal{A}_{{\bf u}}^{\varepsilon}$ is bounded and coercive on $L^{p' }(0,T;V^{p'}_{0.div}) $. Moreover (\ref{rop}) implies that $\mathcal{A}_{{\bf u}}^{\varepsilon}$ is hemicontinuous and using (\ref{rop})-(\ref{m5}) and Lemma 1 in \cite{BDP1} we obtain that $\mathcal{A}_{{\bf u}}^{\varepsilon}$ is monotone.
Furthermore the mapping $\displaystyle {\overline \varphi} \mapsto \int_0^T \int_{\Gamma_0} k | {\overline \varphi} - \tilde s | \, dx' dt$ is convex and Lipschitz continuous on $L^{p'} ( 0,T; V_0^{p'} )$. 
It follows that we can apply the existence result obtained in \cite{BDP2} and we have

\begin{proposition} \label{existence-epsilon}
Let     $f\in  L^ {p'} \bigl(0,T;\textbf{L}^2(\Omega) \bigr)$, $k \in L^{p'} \bigl(0,T;L_{+}^{p'}(\Gamma_0) \bigr)$, $\mu$ satisfying (\ref{rop})-(\ref{mlo}), 
 $\theta \in  L^{\tilde q}\bigl(0,T;L^{\tilde p}(\Omega)\bigr)$ with $\tilde q \ge 1$ and $\tilde p \ge 1$, $s\in L^{ p} \bigl(0,T;\textbf{L}^{p}(\Gamma_0)\bigr)$, $\xi\in W^{1,p'}(0,T)$ satisfying (\ref{xi})
and  $\upsilon_{0}\in \textbf{W}^{1,p}(\Omega)$ satisfying  (\ref{coco}). 
 For any $\varepsilon >0$ there exists $\overline{\upsilon}_{\varepsilon}\in C \bigl([0,T];\mathbf{L}^2(\Omega) \bigr)\cap L^{ p'}\bigl(0,T; V^{p'}_{0.div} \bigr)$  with  $\displaystyle \frac{\partial \overline{\upsilon}_{\varepsilon}}{\partial t} \in L^{p} \bigl(0,T; (V^{p'}_{0.div})' \bigr)$  and  $\pi_{\varepsilon} \in H^{-1}\bigl(0,T;L_{0}^{p}(\Omega) \bigr)$ satisfying the following parabolic variational inequality
\begin{eqnarray}\label{3.14}
\begin{array}{ll}
\displaystyle{ \Big<\frac{\partial}{\partial t}(\overline{\upsilon}_{\varepsilon} , \tilde\vartheta )_{\bf{L}^2(\Omega)},\zeta\Big>_{\mathcal{D}'(0,T),\mathcal{D}(0,T)}
+ \int_{0}^{T} \int_{\Omega} \mathcal{F} \bigl( \theta, {\bf u} + \upsilon_0 \xi, D( \overline{\upsilon}_{\varepsilon} + \upsilon_0 \xi) \bigr) : D( \tilde\vartheta) \zeta \, dx dt } 
\medskip \\
\displaystyle{
+ 2 \varepsilon \int_0^T \int_{\Omega} \bigl| D(\overline{\upsilon}_{\varepsilon} )  \bigr|^{p'-2} D(\overline{\upsilon}_{\varepsilon} )  : D( \tilde\vartheta) \zeta \, dx dt
-  \Big< \int_{\Omega} \pi_{\varepsilon} \ div(\tilde \vartheta ) \, dx , \zeta\Big>_{\mathcal{D}'(0,T),\mathcal{D}(0,T)} } 
\medskip\\
\displaystyle{
+ J( \overline{\upsilon}_{\varepsilon} +\tilde\vartheta \zeta)-J(\overline{\upsilon}_{\varepsilon} )
\geq\,\, \int_{0}^{T} \left( f + \frac{\partial \xi}{\partial t} \upsilon_0, \tilde\vartheta \right)_{{\bf L}^2(\Omega)}\zeta\,dt} 
\qquad \quad  \forall \tilde\vartheta\in V^{p'}_{0},  \  \forall \zeta\in \mathcal{D}(0,T)  
 \end{array}
 \end{eqnarray}
and the initial condition
 \begin{eqnarray}\label{3.15}
\overline{\upsilon}_{\varepsilon}(0)=\upsilon_{0}-\upsilon_{0}\xi(0)=0 \qquad  \mbox{in} \  \Omega.
\end{eqnarray}
\end{proposition}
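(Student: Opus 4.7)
The plan is to verify that Problem (P${}_{\bf u}^{\varepsilon}$) (the approximate inequality with the added $\varepsilon$-viscous term) fits exactly into the framework of the existence theorem in \cite{BDP2}, with the role of the "base exponent" now played by $p'>2$ rather than $p$. Since everything in \cite{BDP2} is stated for exponents $\geq 2$, the strategy is to work on the Gelfand triplet
\begin{eqnarray*}
V^{p'}_{0.div} \subset H \subset (V^{p'}_{0.div})'
\end{eqnarray*}
and apply the theorem there.

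First I would check the functional setting: since $p'>2 \geq 6/5$, the embedding $V^{p'}_{0.div} \hookrightarrow H$ is continuous and dense, so the ambient Bochner space $\{\overline{\varphi}\in L^{p'}(0,T; V^{p'}_{0.div}) : \partial_t \overline{\varphi} \in L^{p}(0,T;(V^{p'}_{0.div})')\}$ embeds continuously into $C^0([0,T]; H)$ and the initial condition $\overline{\upsilon}_\varepsilon(0)=0$ (in $H$) makes sense, compatibly with $\xi(0)=1$. Next, I would verify the four properties required of the operator $\mathcal{A}_{\bf u}^{\varepsilon}$ on $L^{p'}(0,T; V^{p'}_{0.div})$: (i) boundedness, using (\ref{eq17ante}) together with $L^{p'}(0,T; V^{p'}_{0.div}) \hookrightarrow L^{p}(0,T; V^{p}_{0.div})$ for the $\mu$-part, and the standard $p'-1$ growth of $\lambda\mapsto|\lambda|^{p'-2}\lambda$ for the viscous part; (ii) hemicontinuity, from the continuity hypothesis (\ref{rop}) on $\mu$ and of $\lambda\mapsto|\lambda|^{p'-2}\lambda$; (iii) monotonicity, applying Lemma 1 of \cite{BDP1} to each of the two terms (the second one with $p$ replaced by $p'$, valid since this lemma holds for any exponent $>1$); (iv) coercivity on $L^{p'}(0,T; V^{p'}_{0.div})$.

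The coercivity step is the main obstacle, since it is exactly there that the failure of the direct $p$-analysis forced the introduction of the $\varepsilon$-term. For any $u\in L^{p'}(0,T; V^{p'}_{0.div})$, the viscous part yields, via Korn's inequality (\ref{korn}) applied with exponent $p'$,
\begin{eqnarray*}
2\varepsilon \int_0^T\!\!\int_\Omega |D(u)|^{p'}\,dxdt \;\geq\; 2\varepsilon (C_{Korn,p'})^{p'}\, \Vert u\Vert_{L^{p'}(0,T; V^{p'}_{0})}^{p'},
\end{eqnarray*}
while the $\mu$-part is controlled by (\ref{F1}), Hölder's inequality in time and space, and the continuous embedding $L^{p'}\hookrightarrow L^{p}$ (since $T<\infty$) by a quantity of the form $C(1+\Vert u\Vert_{L^{p'}(0,T;V^{p'}_{0})})^{p-1}\cdot \Vert u\Vert_{L^{p'}(0,T; V^{p'}_{0})}$, i.e.\ growing at most like $\Vert u\Vert_{L^{p'}(0,T; V^{p'}_{0})}^{p}$. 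Since $p'>p$, the viscous term dominates as $\Vert u\Vert_{L^{p'}(0,T; V^{p'}_{0})}\to\infty$, which delivers the required coercivity.

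Finally, the functional $\overline{\varphi}\mapsto \int_0^T\!\!\int_{\Gamma_0} k|\overline{\varphi}-\tilde s|\,dx'dt$ is convex and Lipschitz on $L^{p'}(0,T;V^{p'}_0)$ (the trace inequality is available since $p'>2$, and $k\in L^{p'}(0,T; L^{p'}_+(\Gamma_0))$). All assumptions of the existence theorem of \cite{BDP2} being verified, we obtain a solution $\overline{\upsilon}_\varepsilon\in C([0,T];\mathbf{L}^2(\Omega))\cap L^{p'}(0,T; V^{p'}_{0.div})$ with $\partial_t \overline{\upsilon}_\varepsilon \in L^{p}(0,T;(V^{p'}_{0.div})')$ of the divergence-free variational inequality, and then the pressure $\pi_\varepsilon\in H^{-1}(0,T; L^{p}_0(\Omega))$ is recovered by the same De Rham-type argument as in \cite{BDP2}, yielding (\ref{3.14})--(\ref{3.15}).
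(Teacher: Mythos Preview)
Your proposal is correct and follows essentially the same approach as the paper: verify that $\mathcal{A}_{\bf u}^{\varepsilon}$ is bounded, hemicontinuous, monotone and coercive on $L^{p'}(0,T;V^{p'}_{0.div})$ (using the embedding $L^{p'}\hookrightarrow L^{p}$, (\ref{eq17ante})--(\ref{3.2.1}), assumption (\ref{rop}), and Lemma~1 of \cite{BDP1}), that $J$ is convex and Lipschitz on $L^{p'}(0,T;V^{p'}_0)$, and then invoke the existence result of \cite{BDP2}. Your discussion of coercivity is slightly more explicit than the paper's (which simply cites (\ref{3.2.1}) together with the $\varepsilon$-term), but the argument is the same.
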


\bigskip

Let us observe that for any ${\overline{\varphi}}= \tilde\vartheta \zeta$ with $\tilde\vartheta \in V^{p'}_{0.div}$ and $\zeta\in \mathcal{D}(0,T) $ we have 
\begin{eqnarray}\label{3.14bis}
\begin{array}{ll}
\displaystyle 
\underbrace{ \Big<\frac{\partial}{\partial t}(\overline{\upsilon}_{\varepsilon} , \tilde\vartheta )_{\bf{L}^2(\Omega)},\zeta\Big>_{\mathcal{D}'(0,T),\mathcal{D}(0,T)}}_{\displaystyle = \int_0^T \Big< \frac{\partial \overline{\upsilon}_{\varepsilon}}{\partial t}, {\overline{\varphi}}  \Big>_{ (V^{p'}_{0.div})', V^{p'}_{0.div} }    \, dt }
+ \bigl[ \bigl[\mathcal{A}_{{\bf u}}^{\varepsilon} (\overline{\upsilon}_{\varepsilon} ) , {\overline{\varphi}}  \bigr]\bigr]
\medskip\\
\displaystyle{
+ J( \overline{\upsilon}_{\varepsilon} + {\overline{\varphi}} )-J(\overline{\upsilon}_{\varepsilon} )
\geq\,\, \int_{0}^{T} \left( f + \frac{\partial \xi}{\partial t} \upsilon_0, {\overline{\varphi}}  \right)_{{\bf L}^2(\Omega)} \,dt} 
 \end{array}
 \end{eqnarray}
By density of $\mathcal{D}(0,T) \otimes V^{p'}_{0.div}$ into $L^{p'} \bigl(0,T; V^{p'}_{0.div} \bigr)$ the same inequality is true for any ${\overline{\varphi}} \in L^{p'} \bigl(0,T; V^{p'}_{0.div} \bigr)$.

\bigskip

\subsection{A priori estimates}

\bigskip


Let us establish now some a priori estimates for the sequence $(\overline{\upsilon}_{\varepsilon} , \pi_{\varepsilon})_{\varepsilon>0}$.


\begin{proposition} \label{apriori-estimates1}
Let ${\bf u} \in L^ {p} \bigl(0,T; {\bf L}^p (\Omega) \bigr)$. Let     $f\in  L^ {p'} \bigl(0,T; {\bf L}^2(\Omega) \bigr)$, $k \in L^{p'} \bigl(0,T;L_{+}^{p'}(\Gamma_0) \bigr)$, $\mu$ satisfying (\ref{rop})-(\ref{mlo}), 
 $\theta \in  L^{\tilde q}\bigl(0,T;L^{\tilde p}(\Omega)\bigr)$ with $\tilde q \ge 1$ and $\tilde p \ge 1$, $s\in L^{ p} \bigl(0,T; {\bf L}^{p}(\Gamma_0)\bigr)$, $\xi\in W^{1,p'}(0,T)$ satisfying (\ref{xi})
and  $\upsilon_{0}\in \textbf{W}^{1,p}(\Omega)$ satisfying  (\ref{coco}). Then there exists a constant $C$, independant of $\varepsilon$ such that, for all $\varepsilon \in (0, 1]$, we have
\begin{eqnarray}\label{203}
\Vert \overline{\upsilon}_{\varepsilon}\Vert_{L^{p}(0,T; V_{0.div}^p )}\leq C
\end{eqnarray}
 \begin{eqnarray}\label{es_epsilon1}
 \varepsilon^{1/p'} \Vert \overline{\upsilon}_{\varepsilon} \Vert_{L^{p'}(0,T;V_{0.div}^{p'})}\leq C
 \end{eqnarray}
 and
\begin{eqnarray}\label{es_epsilon2}
 \Vert \overline{\upsilon}_{\varepsilon} \Vert_{L^{\infty}(0,T; {\bf L}^2(\Omega))} 
 \leq C.
\end{eqnarray}
\end{proposition}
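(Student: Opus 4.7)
The strategy is to insert the truncated test function $\overline{\varphi}_t = -\chi_{(0,t)}\overline{\upsilon}_{\varepsilon}$ into the extended variational inequality (\ref{3.14bis}); this is admissible because $\overline{\upsilon}_{\varepsilon} \in L^{p'}(0,T;V^{p'}_{0.div})$ and multiplication by $\chi_{(0,t)}$ preserves this space. Since $\overline{\upsilon}_{\varepsilon}$ lies in the parabolic space $\{w \in L^{p'}(0,T;V^{p'}_{0.div}) : \partial_t w \in L^p(0,T;(V^{p'}_{0.div})')\}$ and this space embeds continuously into $C([0,T];\mathbf{L}^2(\Omega))$, the initial condition (\ref{3.15}) together with integration by parts in time yields the contribution $-\frac{1}{2}\|\overline{\upsilon}_{\varepsilon}(t)\|^2_{\mathbf{L}^2(\Omega)}$ for the evolution term.

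For the operator term, I would split $D(\overline{\upsilon}_{\varepsilon}) = D(\overline{\upsilon}_{\varepsilon}+\upsilon_0\xi) - D(\upsilon_0\xi)$, apply the lower bound $\mu \geq \mu_0$ from (\ref{mlo}) to the coercive piece and the upper bound (\ref{F1}) to the cross piece, and use Young's inequality at conjugate exponents $(p/(p-1),p)$ to absorb the cross term into a fraction of the coercive part. Combining the elementary inequality $|a+b|^p \geq 2^{1-p}|a|^p - |b|^p$ with Korn's inequality (\ref{korn}) then produces a lower bound of the form $c_1 \|\overline{\upsilon}_{\varepsilon}\|^p_{L^p(0,t;V^p_{0.div})}$ minus a data-dependent constant, while the viscous perturbation contributes directly $2\varepsilon (C_{Korn,p'})^{p'}\|\overline{\upsilon}_{\varepsilon}\|^{p'}_{L^{p'}(0,t;V^{p'}_{0.div})}$.

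For the friction functional, elementary triangle-inequality estimates give
\[
J(\overline{\upsilon}_{\varepsilon}\chi_{(t,T)}) - J(\overline{\upsilon}_{\varepsilon}) \leq \int_0^t \int_{\Gamma_0} k\,|\overline{\upsilon}_{\varepsilon}|\,dx'\,ds,
\]
which is controlled by Hölder's inequality and the continuous trace embedding $V^p_{\Gamma_1} \hookrightarrow L^p(\Gamma_0)$. The source term is handled by Hölder in time combined with the Sobolev embedding $V^p_{0.div} \hookrightarrow \mathbf{L}^2(\Omega)$, valid precisely because $p \geq 6/5$. A final Young's inequality with a small parameter absorbs all the subcritical powers of $\|\overline{\upsilon}_{\varepsilon}\|_{L^p(0,t;V^p_{0.div})}$ appearing on the right-hand side, leading to the master estimate
\[
\frac{1}{2}\|\overline{\upsilon}_{\varepsilon}(t)\|_{\mathbf{L}^2(\Omega)}^2 + \frac{c_1}{2}\|\overline{\upsilon}_{\varepsilon}\|^p_{L^p(0,t;V^p_{0.div})} + 2\varepsilon (C_{Korn,p'})^{p'}\|\overline{\upsilon}_{\varepsilon}\|^{p'}_{L^{p'}(0,t;V^{p'}_{0.div})} \leq C
\]
uniformly in $t \in [0,T]$ and $\varepsilon \in (0,1]$, with $C$ depending only on the data. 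Specializing to $t=T$ delivers (\ref{203}) and (\ref{es_epsilon1}), while taking the supremum over $t$ yields (\ref{es_epsilon2}). The main technical point to verify is the admissibility of the non-smooth test function $-\chi_{(0,t)}\overline{\upsilon}_{\varepsilon}$ together with the time integration-by-parts formula, which rests on the density of $\mathcal{D}(0,T)\otimes V^{p'}_{0.div}$ in $L^{p'}(0,T;V^{p'}_{0.div})$ (already used to establish (\ref{3.14bis})) and on the non-symmetric Gelfand triplet structure $V^{p'}_{0.div} \hookrightarrow \mathbf{L}^2(\Omega) \hookrightarrow (V^{p'}_{0.div})'$ available for $p' \geq 2$.
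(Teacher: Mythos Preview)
Your proof is correct and follows essentially the same route as the paper: test with $-\overline{\upsilon}_{\varepsilon}\mathbf{1}_{[0,t]}$, extract the $L^2$-energy from the time derivative, use the coercivity bound for $\mathcal{F}$ together with Korn's inequality, and keep the $\varepsilon$-term as a nonnegative contribution. Two minor tactical differences are worth noting: the paper bounds the friction contribution directly by the data constant $\int_0^t\!\int_{\Gamma_0} k|\tilde s|\,dx'\,d\tilde t$ (since $J(\overline{\upsilon}_{\varepsilon}\mathbf{1}_{(t,T]})-J(\overline{\upsilon}_{\varepsilon})\le \int_0^t\!\int_{\Gamma_0}k|\tilde s|$), which avoids the trace estimate and absorption step you use; and instead of Young's inequality to close the estimate, the paper divides through by $\|\overline{\upsilon}_{\varepsilon}\|_{L^p(0,T;V^p_{0.div})}^p$ and argues that the resulting expression tends to $2(C_{Korn,p})^p\mu_0>0$ as this norm tends to infinity, hence it must stay bounded.
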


\begin{proof} 
Let  $t\in(0,T]$ and $\overline{\varphi} = - \overline{\upsilon}_{\varepsilon} {\bf 1}_{[0,t]} \in L^{p'} \bigl(0,T; V^{p'}_{0.div} \bigr)$ 
where ${\bf 1}_{[0,t]}$ is the indicatrix function of the time interval $[0,t]$.
With (\ref{3.14bis}) we obtain
 \begin{eqnarray*}\label{1.219 }
\begin{array}{ll}
 \displaystyle{ \int_{0}^{t} \Big< \frac{ \partial \overline{\upsilon}_{\varepsilon}}{\partial t} ,\overline{\upsilon}_{\varepsilon} \Big>_{(V_{0.div}^{p'})',V_{0.div}^{p'}} \,d\tilde t
 +\int_{0}^{t} \int_{\Omega} {\mathcal F}  \bigl(\theta, {\bf u} + \upsilon_0 \xi , D( \overline{\upsilon}_{\varepsilon} + \upsilon_0 \xi) \bigr) : D( \overline{\upsilon}_{\varepsilon}) \,dx d \tilde t} \\
  \displaystyle + 2 \varepsilon  \int_{0}^{t} \int_{\Omega} \bigl|  D( \overline{\upsilon}_{\varepsilon} ) \bigr|^{p'-2}  D( \overline{\upsilon}_{\varepsilon} ) : D( \overline{\upsilon}_{\varepsilon}) \,dx d\tilde t
 \\
   \displaystyle{\  \leq \int_{0}^{t} \left( f + \frac{ \partial \xi}{\partial t} \upsilon_0 , \overline{\upsilon}_{\varepsilon}  \right)_{{\bf L}^2(\Omega) } \,d \tilde t
+  \int_{0}^{t} \int_{\Gamma_0} k | \tilde s| \,dx' d \tilde t}.
\end{array}
\end{eqnarray*} 
From (\ref{mlo}) and (\ref{kornbis})-(\ref{korn}) we have 
\begin{eqnarray*}\label{1.220 }
\begin{array}{ll}
\displaystyle{\int_{0}^{t} \int_{\Omega} {\mathcal F}  \bigl(\theta, {\bf u}  + \upsilon_0 \xi , D( \overline{\upsilon}_{\varepsilon} + \upsilon_0 \xi) \bigr) : D( \overline{\upsilon}_{\varepsilon}) \,dx d \tilde t }
\\
\displaystyle \geq \displaystyle{2(C_{Korn, p})^{p} \mu_{0}\int_{0}^{t} \Vert \overline{\upsilon}_{\varepsilon}+\upsilon_{0}\xi\Vert _{1.p}^{p}\,d\tilde t}
- \displaystyle{2\mu_{1} \int_{0}^{t} \Vert  \upsilon_{0} \xi \Vert_{1.p} \Vert\overline{\upsilon}_{\varepsilon}+\upsilon_{0}\xi\Vert_{1.p}^{p-1}\,d\tilde t} 
\end{array}
\end{eqnarray*}
and with Young's inequality
\begin{eqnarray*}\label{1.220 }
\begin{array}{ll}
\displaystyle{\int_{0}^{t} \int_{\Omega} {\mathcal F}  \bigl(\theta, {\bf u}  + \upsilon_0 \xi , D( \overline{\upsilon}_{\varepsilon} + \upsilon_0 \xi) \bigr) : D( \overline{\upsilon}_{\varepsilon}) \,dx d \tilde t }
\\
 \geq \displaystyle{2(C_{Korn, p})^{p}  \mu_0
 \Bigl|  \Vert \overline{\upsilon}_{\varepsilon} \Vert_{L^{p} (0,t; V_{0.div}^{p})} 
 - \Vert \upsilon_{0}\xi\Vert_{L^{p} (0,t; V^{p}_{\Gamma_1})} \Bigr|^{p} }
\medskip\\
- \displaystyle{2\mu_1
 \Bigl( \Vert\overline{\upsilon}_{\varepsilon} \Vert_{L^{p} (0,t; V_{0.div}^{p})} 
 + \Vert \upsilon_{0}\xi\Vert_{L^{p} (0,t; V_{\Gamma_1}^{p}) } \Bigr)^{p -1} 
 \Vert  \upsilon_{0} \xi \Vert_{L^{p} (0,t; V_{\Gamma_1}^{p})} } .
\end{array}
\end{eqnarray*}
Similarly
\begin{eqnarray*}\label{1.220}
\begin{array}{ll}
\displaystyle 2 \varepsilon  \int_{0}^{t} \int_{\Omega} \bigl|  D( \overline{\upsilon}_{\varepsilon} ) \bigr|^{p'-2}  
D( \overline{\upsilon}_{\varepsilon}) : D( \overline{\upsilon}_{\varepsilon}) \,dx d \tilde t 
= 2 \varepsilon  \int_{0}^{t} \int_{\Omega} \bigl|  D( \overline{\upsilon}_{\varepsilon} ) \bigr|^{p'}  \,dx d \tilde t 
\\
\displaystyle 
 \geq \displaystyle 2 \varepsilon (C_{Korn, p'})^{p'}  
   \Vert \overline{\upsilon}_{\varepsilon} \Vert_{L^{p'} (0,t; V_{0.div}^{p'})}^{p'} .
\end{array}
\end{eqnarray*}

 For the sake of notational simplicity let us define $\displaystyle \overline{f} = f + \upsilon_0 \frac{ \partial \xi}{\partial t} \in  L^{p'} \bigl(0,T; {\bf L}^2(\Omega) \bigr) $. Then we obtain  
\begin{eqnarray} \label{norme_infini_b1}
\begin{array}{ll}
 &\displaystyle{\frac{1}{2}\Vert\overline{\upsilon}_{\varepsilon}(t)\Vert_{\mathbf{L}^2(\Omega)}^2} 
 \displaystyle{+ 2(C_{Korn, p})^{p}  \mu_0
 \Bigl|  \Vert \overline{\upsilon}_{\varepsilon} \Vert_{L^{p} (0,t; V_{0.div}^{p})} 
 - \Vert \upsilon_{0}\xi\Vert_{L^{p} (0,t; V^{p}_{\Gamma_1})} \Bigr|^{p} }
  \\
& 
+ \displaystyle{ 2 \varepsilon (C_{Korn, p'})^{p'}  \Vert \overline{\upsilon}_{\varepsilon} \Vert_{L^{p'} (0,t; V_{0.div}^{p'})}^{p'} }  \\
  &
+ \displaystyle{ \leq \displaystyle{\widetilde C \Vert \overline{f}\Vert _{L^{p'} (0,t; {\bf L}^2(\Omega))}
  \Vert \overline{\upsilon}_{\varepsilon}\Vert_{L^p(0,t; V_{0.div}^p)} 
   + \int_{0}^{t} \int_{\Gamma_0} k |\tilde s| \, dx' \,d\tilde t } } \\
 &
+ \displaystyle{2\mu_1
 \Bigl( \Vert\overline{\upsilon}_{\varepsilon} \Vert_{L^{p} (0,t; V_{0.div}^{p})} 
 + \Vert \upsilon_{0}\xi\Vert_{L^{p} (0,t; V_{\Gamma_1}^{p}) } \Bigr)^{p -1} 
 \Vert  \upsilon_{0} \xi \Vert_{L^{p} (0,t; V_{\Gamma_1}^{p})} }
\end{array}
\end{eqnarray}
where $\widetilde C$ denotes the norm of the continuous injection of $V^p_{0}$ into ${\bf L}^2 (\Omega)$.

\smallskip

Let us consider first $t=T$ and assume that $\varepsilon \in (0,1]$. We get
\begin{eqnarray*}\label{norme_infini_b}
\begin{array}{ll}
 & \displaystyle{ 2(C_{Korn, p})^{p}  \mu_0
 \Bigl|  \Vert \overline{\upsilon}_{\varepsilon} \Vert_{L^{p} (0,T; V_{0.div}^{p})} 
 - \Vert \upsilon_{0}\xi\Vert_{L^{p} (0,T; V^{p}_{\Gamma_1})} \Bigr|^{p} }
  \\
 & \leq \displaystyle{\widetilde C \Vert \overline{f}\Vert _{L^{p'} (0,T; {\bf L}^2(\Omega))}
  \Vert \overline{\upsilon}_{\varepsilon}\Vert_{L^p(0,T; V_{0.div})}  + J(0) } \\
 &
+ \displaystyle{2\mu_1
 \Bigl( \Vert\overline{\upsilon}_{\varepsilon} \Vert_{L^{p} (0,T; V_{0.div}^{p})} 
 + \Vert \upsilon_{0}\xi\Vert_{L^{p} (0,T; V_{\Gamma_1}^{p}) } \Bigr)^{p -1} 
 \Vert  \upsilon_{0} \xi \Vert_{L^{p} (0,T; V_{\Gamma_1}^{p})} } .
\end{array}
\end{eqnarray*}
If $\Vert \overline{\upsilon}_{\varepsilon} \Vert_{L^{p} (0,T; V_{0.div}^{p})} \not=0$ it follows that
\begin{eqnarray*}\label{norme_infini_b}
\begin{array}{ll}
 &
 \displaystyle{ 2(C_{Korn, p})^{p}  \mu_0
 \left|  1 - 
  \frac{ \Vert \upsilon_{0}\xi\Vert_{L^{p} (0,T; V^{p}_{\Gamma_1})} }{\Vert \overline{\upsilon}_{\varepsilon} \Vert_{L^{p} (0,T; V_{0.div}^{p})} } \right|^{p} }
  \\
&  \leq \displaystyle{\widetilde C \Vert \overline{f}\Vert _{L^{p'} (0,T; {\bf L}^2(\Omega))}
  \Vert \overline{\upsilon}_{\varepsilon}\Vert_{L^p(0,T; V_{0.div}^p)}^{1-p}  
  + \frac{J(0)}{\Vert \overline{\upsilon}_{\varepsilon} \Vert_{L^{p} (0,T; V_{0.div}^{p})}^p}  } \\
 &
+ \displaystyle{2\mu_1
 \left( 1  
 + \frac{\Vert \upsilon_{0}\xi\Vert_{L^{p} (0,T; V_{\Gamma_1}^{p}) }}{\Vert \overline{\upsilon}_{\varepsilon} \Vert_{L^{p} (0,T; V_{0.div}^{p})} } \right)^{p -1} 
 \frac{\Vert  \upsilon_{0} \xi \Vert_{L^{p} (0,T; V_{\Gamma_1}^{p})} }{\Vert \overline{\upsilon}_{\varepsilon} \Vert_{L^{p} (0,T; V_{0.div}^{p})} }} .
\end{array}
\end{eqnarray*}
By observing that the mapping 
\begin{eqnarray*}
\begin{array}{ll}
\displaystyle 
z \mapsto 
& 
\displaystyle {2(C_{Korn, p})^{p}  \mu_0
 \left|  1 - 
  \frac{\Vert \upsilon_{0}\xi\Vert_{L^{p} (0,T; V_{\Gamma_1}^{p}) }}{z }
  \right|^{p} }
  -  \displaystyle{\widetilde C \frac{\Vert \overline{f}\Vert _{L^{p'} (0,T; {\bf L}^2(\Omega))}}{z^{p-1}}  
  - \frac{J(0)}{z^p}  } 
  \\
 &
- \displaystyle{2\mu_1
 \left( 1  
 + \frac{\Vert \upsilon_{0}\xi\Vert_{L^{p} (0,T; V_{\Gamma_1}^{p}) }}{z } \right)^{p -1} 
 \frac{\Vert  \upsilon_{0} \xi \Vert_{L^{p} (0,T; V_{\Gamma_1}^{p})} }{z }}
 \end{array}
 \end{eqnarray*}
 admits $2(C_{Korn, p})^{p}  \mu_0>0$ as limit when $z$ tends to $+ \infty$, we infer that there exists a real number $C>0$, independent of $\varepsilon$, such that 
 \begin{eqnarray*}
 \Vert \overline{\upsilon}_{\varepsilon} \Vert_{L^{p} (0,T; V_{0.div}^{p})} \le C \qquad \forall \varepsilon \in (0,1]
 \end{eqnarray*}
 which yields (\ref{203}). Going back to (\ref{norme_infini_b1}) we obtain immediately (\ref{es_epsilon1}) and (\ref{es_epsilon2}).
 \end{proof}
 
 \begin{remark}
 Let us emphasize that the constant $C>0$ is independent of ${\bf u}$.
 \end{remark}


\begin{proposition} \label{apriori-estimates2}
Under the same assumptions as in Proposition \ref{apriori-estimates1} there exists a constant $C' >0$, independant of $\varepsilon$ such that, for all $\varepsilon \in (0, 1]$, we have
\begin{eqnarray} \label{es_derivee_epsilon}
\left\Vert \frac{ \partial \overline{\upsilon}_{\varepsilon}}{\partial t} \right\Vert_{L^p(0,T;(V_{0.div}^{p'} )')} \leq C'
\end{eqnarray}
and
 \begin{eqnarray}\label{es_pression_epsilon}
\Vert \pi_{\varepsilon}\Vert _{H^{-1} (0,T;L^{p}(\Omega))}\leq   C' .
\end{eqnarray}
\end{proposition}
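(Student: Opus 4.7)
The strategy for the derivative estimate is to use the variational inequality (\ref{3.14bis}) with both $\pm \overline{\varphi}$ for an arbitrary $\overline{\varphi} \in L^{p'}\bigl(0,T;V^{p'}_{0.div}\bigr)$ and to control the non-smooth part using the Lipschitz property $\bigl|J(\overline{\upsilon}_{\varepsilon}\pm\overline{\varphi})-J(\overline{\upsilon}_{\varepsilon})\bigr| \le \int_0^T\!\!\int_{\Gamma_0} k|\overline{\varphi}|\,dx'dt$. Combining the two inequalities yields
\begin{eqnarray*}
\left| \int_0^T \Big\langle \frac{\partial \overline{\upsilon}_{\varepsilon}}{\partial t},\overline{\varphi}\Big\rangle_{(V^{p'}_{0.div})',V^{p'}_{0.div}}\!dt \right|
\le \bigl| [[\mathcal{A}^{\varepsilon}_{\mathbf u}(\overline{\upsilon}_{\varepsilon}),\overline{\varphi}]] \bigr| + \int_0^T\!\!\int_{\Gamma_0} k|\overline{\varphi}|\,dx' dt + \left|\int_0^T \left(f+\frac{\partial \xi}{\partial t}\upsilon_0,\overline{\varphi}\right)_{\mathbf{L}^2(\Omega)}dt\right|.
\end{eqnarray*}
Each term on the right must then be bounded by a constant (independent of $\varepsilon \in (0,1]$) times $\Vert \overline{\varphi}\Vert_{L^{p'}(0,T;V^{p'}_{0.div})}$.

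Splitting $\mathcal{A}^{\varepsilon}_{\mathbf u}$ into the viscous part and the $\varepsilon$-regularization, the viscous part is handled by (\ref{F1}) together with H\"older in space (conjugate exponents $p'$ and $p$, so $|\mathcal F|^{p'} \in L^1$), H\"older in time (exponents $p'$ and $p$), Korn's inequality (\ref{korn}), and the bound (\ref{203}); the resulting factor $\Vert \overline{\varphi}\Vert_{L^p(0,T;V^p_{0.div})}$ is controlled by $\Vert \overline{\varphi}\Vert_{L^{p'}(0,T;V^{p'}_{0.div})}$ via the continuous embedding induced by the boundedness of $\Omega$ and $(0,T)$. The $\varepsilon$-regularization contributes $2\varepsilon \Vert \overline{\upsilon}_{\varepsilon}\Vert_{L^{p'}(0,T;V^{p'}_{0.div})}^{p'-1} \Vert \overline{\varphi}\Vert_{L^{p'}(0,T;V^{p'}_{0.div})}$ after H\"older; writing $\varepsilon = \varepsilon^{1/p'}\cdot \varepsilon^{(p'-1)/p'}$ and grouping produces the factor $\varepsilon^{1/p'}\bigl(\varepsilon^{1/p'}\Vert\overline{\upsilon}_{\varepsilon}\Vert_{L^{p'}(0,T;V^{p'}_{0.div})}\bigr)^{p'-1}$, which is uniformly bounded by (\ref{es_epsilon1}) and $\varepsilon \le 1$. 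The friction and forcing terms are treated by the continuous trace $V^p_0 \hookrightarrow \mathbf{L}^p(\Gamma_0)$ and the injection $V^p_0 \hookrightarrow \mathbf{L}^2(\Omega)$, combined with (\ref{203}) and the regularities of $k$, $f$, $\upsilon_0$, $\xi$. This yields (\ref{es_derivee_epsilon}).

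For the pressure estimate, we restrict to test functions $\tilde{\vartheta} \in \bigl(W^{1,p'}_0(\Omega)\bigr)^3 \subset V^{p'}_0$ vanishing on the entire boundary; for such $\tilde{\vartheta}$ and any $\zeta \in \mathcal{D}(0,T)$, the friction functional satisfies $J(\overline{\upsilon}_{\varepsilon}+\tilde{\vartheta}\zeta) = J(\overline{\upsilon}_{\varepsilon})$ and taking $\pm \tilde{\vartheta}\zeta$ in (\ref{3.14}) produces an equality. Integrating this equality in time from $0$ to $t$ (using $\overline{\upsilon}_{\varepsilon}(0)=0$) and introducing the primitive $\Pi_{\varepsilon}(t) = \int_0^t \pi_{\varepsilon}(s)\,ds$ one obtains
\begin{eqnarray*}
\int_{\Omega} \Pi_{\varepsilon}(t)\,{\rm div}(\tilde{\vartheta})\,dx = G_{\varepsilon}(t)[\tilde{\vartheta}] \qquad \forall \tilde{\vartheta} \in \bigl(W^{1,p'}_0(\Omega)\bigr)^3,
\end{eqnarray*}
where $G_{\varepsilon}(t)$ collects the momentum, viscous, regularization and forcing contributions on $(0,t)$. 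The De Rham/Ne\v{c}as theorem on the bounded Lipschitz domain $\Omega$ then provides $\Pi_{\varepsilon}(t) \in L^p_0(\Omega)$ uniquely with $\Vert \Pi_{\varepsilon}(t)\Vert_{L^p_0(\Omega)} \le C_{\Omega} \Vert G_{\varepsilon}(t)\Vert_{W^{-1,p}(\Omega)^3}$. Estimating $G_{\varepsilon}(t)$ term by term with the same H\"older/Korn arguments as in the derivative estimate and invoking (\ref{203})--(\ref{es_epsilon2}) shows $\Vert \Pi_{\varepsilon}(t)\Vert_{L^p_0(\Omega)} \le C$ uniformly for $t \in [0,T]$ and $\varepsilon \in (0,1]$. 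Hence $\Pi_{\varepsilon} \in L^{\infty}\bigl(0,T;L^p_0(\Omega)\bigr) \subset L^2\bigl(0,T;L^p_0(\Omega)\bigr)$ with $\Pi_{\varepsilon}(0) = 0$, and differentiating in time gives (\ref{es_pression_epsilon}).

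The main obstacle is the $\varepsilon$-regularization term in the derivative estimate: the naive H\"older bound involves $\Vert \overline{\upsilon}_{\varepsilon}\Vert^{p'-1}_{L^{p'}(0,T;V^{p'}_{0.div})}$, which may blow up as $\varepsilon \to 0$. The key algebraic observation that saves the argument is the factorization $\varepsilon = \varepsilon^{1/p'} \cdot \bigl(\varepsilon^{1/p'}\bigr)^{p'-1}$, which is precisely calibrated to the a priori scaling (\ref{es_epsilon1}) and turns the dangerous factor into the uniformly bounded quantity $\bigl(\varepsilon^{1/p'}\Vert \overline{\upsilon}_{\varepsilon}\Vert_{L^{p'}(0,T;V^{p'}_{0.div})}\bigr)^{p'-1}$ multiplied by the harmless prefactor $\varepsilon^{1/p'} \le 1$. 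A secondary technical point is ensuring that the De Rham construction for the pressure is compatible with the friction inequality, which is why the reduction to test functions vanishing on all of $\partial\Omega$ is essential.
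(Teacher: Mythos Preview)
Your plan for the derivative estimate (\ref{es_derivee_epsilon}) is essentially identical to the paper's: the same $\pm\overline{\varphi}$ trick in (\ref{3.14bis}), the same Lipschitz bound on $J$, and crucially the same factorization $\varepsilon = \varepsilon^{1/p'}\cdot(\varepsilon^{1/p'})^{p'-1}$ calibrated to (\ref{es_epsilon1}).

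For the pressure estimate your route differs from the paper's. The paper works directly at the $H^{-1}$ level: from the equality (\ref{toto_epsilon}) it bounds $\bigl|\langle\int_\Omega\pi_\varepsilon\,{\rm div}(\tilde\vartheta)\,dx,\zeta\rangle\bigr|$ by a constant times $\|\tilde\vartheta\zeta\|_{H^1_0(0,T;V^{p'}_0)}$ (the $H^1_0$ regularity in time is forced by the momentum term $-\int_0^T(\overline{\upsilon}_\varepsilon,\tilde\vartheta)\zeta'\,dt$, controlled via (\ref{es_epsilon2})), then composes with the right inverse $P_{p'}:L^{p'}_0(\Omega)\to\mathbf{W}^{1,p'}_0(\Omega)$ of the divergence and concludes by density of $\mathcal{D}(0,T)\otimes L^{p'}(\Omega)$ in $H^1_0(0,T;L^{p'}(\Omega))$. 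Your approach instead time-integrates and applies De~Rham/Ne\v{c}as pointwise in $t$ to produce a primitive $\Pi_\varepsilon$ bounded in $L^\infty(0,T;L^p_0(\Omega))$, recovering $\pi_\varepsilon=\partial_t\Pi_\varepsilon\in H^{-1}$. Both routes are valid and standard; the paper's is slightly more economical since it avoids constructing the primitive, while yours makes the structure of the bound (continuity of a time-primitive) more transparent and is in fact the route the paper itself uses later in Theorem~\ref{main_theo} to construct the limit pressure. One caveat in your write-up: the formula $\Pi_\varepsilon(t)=\int_0^t\pi_\varepsilon(s)\,ds$ is only formal, since $\pi_\varepsilon$ is a~priori known only in $H^{-1}$ in time. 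The clean way to execute your strategy is to first check that $G_\varepsilon(t)$ annihilates divergence-free $\mathbf{W}^{1,p'}_0$-fields for each $t$ (this follows from the variational inequality with $\overline{\varphi}=\pm\tilde\vartheta\,\mathbf{1}_{[0,t]}$), define $\Pi_\varepsilon(t)$ via De~Rham from $G_\varepsilon(t)$, estimate it, and only then identify $\pi_\varepsilon=\partial_t\Pi_\varepsilon$ by uniqueness of the pressure.
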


\begin{proof}
Let us choose   $\overline{\varphi}=\pm\tilde  \vartheta \zeta$  with  $\tilde  \vartheta \in V^{p'}_{0.div} $
 and  $\zeta \in \mathcal{D}(0,T)$  in (\ref{3.14bis}) we obtain
\begin{eqnarray*}\label{inega_1}
\begin{array}{ll}
\displaystyle{ \int_0^T \left\langle \frac{\partial \overline{\upsilon}_{\varepsilon} }{\partial t} , \pm \tilde \vartheta \zeta \right\rangle_{(V_{0.div}^{p'})', V_{0.div}^{p'}}  \, dt 
+
\bigl[\bigl[ \mathcal{A}_{{\bf u}}^{\varepsilon} (\overline{\upsilon}_{\varepsilon}) ,\pm \tilde \vartheta \zeta \bigr]\bigr] }
 \\
 \displaystyle{ +J(\overline{\upsilon}_{\varepsilon} \pm \tilde \vartheta \zeta)
-J(\overline{\upsilon}_{\varepsilon} )
\geq  \int_0^T ( \overline{f},\pm\tilde \vartheta \zeta)_{{\bf L}^2(\Omega)} \, dt}.
\end{array}
\end{eqnarray*}
But
\begin{eqnarray*}
\displaystyle \bigl| J(\overline{\upsilon}_{\varepsilon} \pm \tilde \vartheta \zeta)
-J(\overline{\upsilon}_{\varepsilon} ) \bigr| 
\le \int_0^T \int_{\Gamma_0} k | \tilde \vartheta \zeta| \, dx'  dt 
\end{eqnarray*}
and recalling that $k \in L^{p'} \bigl( 0,T; L^{p'}_+(\Gamma_0) \bigr) \subset L^{p} \bigl( 0,T; L^{p}_+(\Gamma_0) \bigr)$ we get
\begin{eqnarray*}
\displaystyle \bigl| J(\overline{\upsilon}_{\varepsilon} \pm \tilde \vartheta \zeta)
-J(\overline{\upsilon}_{\varepsilon} ) \bigr| 
\le \| \gamma_{p'} \|_{{\mathcal L} ({\bf W}^{1,p'} (\Omega) , {\bf L}^{p'} (\partial \Omega))} \| k\|_{L^p(0,T; L^p(\Gamma_0))} \| \tilde \vartheta \zeta \|_{L^{p'} (0,T; V_{0.div}^{p'})}
\end{eqnarray*}
where $\gamma_{p'}$ denotes the trace operator from ${\bf W}^{1,p'} (\Omega)$ into ${\bf L}^{p'} (\partial \Omega)$. Since $\overline{f} \in L^{p'} \bigl( 0,T; {\bf L}^2(\Omega) \bigr) \subset L^{p} \bigl( 0,T; {\bf L}^2(\Omega) \bigr)$ we obtain 
\begin{eqnarray}\label{inega_1}
\begin{array}{ll}
& \displaystyle{ \left| \int_0^T  \left\langle \frac{\partial \overline{\upsilon}_{\varepsilon} }{\partial t} ,  \tilde \vartheta \zeta \right\rangle_{(V_{0.div}^{p'})', V_{0.div}^{p'}} \, dt \right| } 
\\
& 
\le 
\| \gamma_{p'} \|_{{\mathcal L} ({\bf W}^{1,p'} (\Omega) , {\bf L}^{p'} (\partial \Omega))} \| k\|_{L^p(0,T; L^p(\Gamma_0))} \| \tilde \vartheta \zeta \|_{L^{p'} (0,T; V_{0.div}^{p'})} 
\\
& 
\displaystyle 
+ \widetilde C' \| \overline{f} \|_{L^{p} (0,T; {\bf L}^2(\Omega))} \| \tilde \vartheta \zeta \|_{L^{p'} (0,T; V_{0.div}^{p'})}
+ 
\Bigl|\bigl[\bigl[ \mathcal{A}_{{\bf u}}^{\varepsilon} (\overline{\upsilon}_{\varepsilon}) , \tilde \vartheta \zeta \bigr]\bigr] \Bigr| 
\end{array}
\end{eqnarray}
where $\widetilde C'$ denotes the norm of the continuous injection of $V_{0}^{p'}$ into ${\bf L}^2(\Omega)$.

On the other hand
\begin{eqnarray*}
\begin{array}{ll}
& \displaystyle \Bigl|\bigl[\bigl[ \mathcal{A}_{{\bf u}}^{\varepsilon} (\overline{\upsilon}_{\varepsilon}) , \tilde \vartheta \zeta \bigr]\bigr] \Bigr|
\le 
2 \mu_1 \Vert D( \overline{\upsilon}_{\varepsilon} + \upsilon_0 \xi ) \Vert_{L^p(0,T; ( L^p (\Omega))^{3 \times 3} )}^{p-1} \Vert D( \tilde \vartheta \zeta) \Vert_{L^p(0,T; ( L^p (\Omega))^{3 \times 3} )} 
\\
& \displaystyle 
+ 2 \varepsilon \Vert D( \overline{\upsilon}_{\varepsilon} ) \Vert_{L^{p'}(0,T; ( L^{p'} (\Omega) )^{3\times 3} )}^{p'-1} \Vert D( \tilde \vartheta \zeta) \Vert_{L^{p'}(0,T; ( L^{p'} (\Omega) )^{3\times 3} )}
\\
& \displaystyle 
\le 
2 \mu_1 \bigl( {\rm meas}( \Omega) T \bigr)^{\frac{2-p}{p}} 
\Vert D( \overline{\upsilon}_{\varepsilon} + \upsilon_0 \xi ) \Vert_{L^p(0,T; ( L^p (\Omega))^{3 \times 3} )}^{p-1} 
\Vert D( \tilde \vartheta \zeta) \Vert_{L^{p'}(0,T; ( L^{p'} (\Omega))^{3 \times 3} )} 
\\
& \displaystyle 
+ 2 \varepsilon \Vert D( \overline{\upsilon}_{\varepsilon} ) \Vert_{L^{p'}(0,T; ( L^{p'} (\Omega) )^{3\times 3} )}^{p'-1} \Vert D( \tilde \vartheta \zeta) \Vert_{L^{p'}(0,T; ( L^{p'} (\Omega) )^{3\times 3} )} . 
\end{array}
\end{eqnarray*}
Thus
\begin{eqnarray*}
\begin{array}{ll}
& \displaystyle \Bigl|\bigl[\bigl[ \mathcal{A}_{{\bf u}}^{\varepsilon} (\overline{\upsilon}_{\varepsilon}) , \tilde \vartheta \zeta \bigr]\bigr] \Bigr|
\\
&
\le 
2 \mu_1 \bigl( {\rm meas}( \Omega) T \bigr)^{\frac{2-p}{p}} 
\Bigl( \Vert  \overline{\upsilon}_{\varepsilon} \Vert_{L^p(0,T; V_{0.div}^p)}  
+ \Vert \upsilon_0 \xi  \Vert_{L^p(0,T; V^p_{\Gamma_1})} \Bigr)^{p-1} 
\Vert \tilde \vartheta \zeta \Vert_{L^{p'}(0,T; V_{0.div}^{p'})} 
\\
& \displaystyle 
+ 2 \varepsilon \Vert  \overline{\upsilon}_{\varepsilon} \Vert_{L^{p'}(0,T; V_{0.div}^{p'})}^{p'-1} 
\Vert  \tilde \vartheta \zeta \Vert_{L^{p'}(0,T; V_{0.div}^{p'})}
\\
& \displaystyle 
\le 
2 \mu_1 \bigl( {\rm meas}( \Omega) T \bigr)^{\frac{2-p}{p}} 
\Bigl( \Vert  \overline{\upsilon}_{\varepsilon} \Vert_{L^p(0,T; V_{0.div}^p)}  
+ \Vert \upsilon_0 \xi  \Vert_{L^p(0,T; V^p_{\Gamma_1})} \Bigr)^{p-1} 
\Vert \tilde \vartheta \zeta \Vert_{L^{p'}(0,T; V_{0.div}^{p'})} 
\\
& \displaystyle 
+  \varepsilon^{1/p'}  \bigl( \varepsilon^{1/p'} \Vert  \overline{\upsilon}_{\varepsilon} \Vert_{L^{p'}(0,T; V_{0.div}^{p'})} \bigr)^{p'-1} 
\Vert  \tilde \vartheta \zeta \Vert_{L^{p'}(0,T; V_{0.div}^{p'})}.
%
%
\end{array}
\end{eqnarray*}
By using (\ref{203}) and (\ref{es_epsilon1}) we obtain 
\begin{eqnarray*}\label{inega_2}
\begin{array}{ll}
& 
\displaystyle \Bigl|\bigl[\bigl[ \mathcal{A}_{{\bf u}}^{\varepsilon} (\overline{\upsilon}_{\varepsilon}) , \tilde \vartheta \zeta \bigr]\bigr] \Bigr|
\le 
\Bigl( 
 2 \mu_1 \bigl( C + \Vert \upsilon_0 \xi  \Vert_{L^p(0,T; V^p_{\Gamma_1})} \bigr)^{p-1} 
\bigl({\rm meas}(\Omega) T \bigr)^{\frac{2-p}{p}}
\\
& \displaystyle 
+  \varepsilon^{1/p' }  C^{p'-1} 
\Bigr)
\Vert  \tilde \vartheta \zeta \Vert_{L^{p'}(0,T; V_{0.div}^{p'})}.
\end{array}
\end{eqnarray*}
Going back to (\ref{inega_1}) we obtain (\ref{es_derivee_epsilon}).

\bigskip


 Let us prove now  (\ref{es_pression_epsilon}).  We choose  $\overline{\varphi} = \pm\tilde \vartheta \zeta $ with $ \tilde \vartheta \in {\bf W}^{1, p'}_0(\Omega) $ and $\zeta \in \mathcal{D}(0,T)$ in (\ref{3.14}). We obtain 
\begin{eqnarray}\label{toto_epsilon}
 \begin{array}{ll}
\displaystyle \left\langle \int_{\Omega} \pi_{\varepsilon} div(\tilde \vartheta) \, dx, \zeta \right\rangle_{{\mathcal D}'(0,T), {\mathcal D} (0,T)} 
= -\int_{0}^{T} (\overline{\upsilon}_{\varepsilon},\tilde \vartheta )_{_{{\bf L}^2(\Omega)}} \zeta' \,dt
\\
\displaystyle + \int_{0}^{T} \int_{\Omega} \mathcal{F} \bigl( \theta, {\bf u} + \upsilon_0 \xi, D( \overline{\upsilon}_{\varepsilon} + \upsilon_0 \xi) \bigr) : D( \tilde\vartheta) \zeta \, dx dt 
 \\
\displaystyle
+ 2 \varepsilon \int_0^T \int_{\Omega} \bigl| D(\overline{\upsilon}_{\varepsilon} )  \bigr|^{p'-2} D(\overline{\upsilon}_{\varepsilon} )  : D( \tilde\vartheta) \zeta \, dx dt
- \int_{0}^{T}(\overline{f},\tilde \vartheta)_{{\bf L}^2(\Omega)}\zeta\,dt  .
\end{array}
 \end{eqnarray} 
We estimate the right-hand side with the same kind of computations as previously, i.e.
\begin{eqnarray*}\label{Smai_2epsilon}
 \begin{array}{ll}
\displaystyle \left| \left\langle \int_{\Omega} \pi_{\varepsilon} div(\tilde \vartheta) \, dx, \zeta \right\rangle_{{\mathcal D}'(0,T), {\mathcal D} (0,T)} \right|
\le \sqrt{T} \Vert \overline{\upsilon}_{\varepsilon} \Vert_{L^{\infty} (0,T; {\bf L}^2(\Omega))} 
\Vert \tilde \vartheta \zeta' \Vert_{L^{2} (0,T; {\bf L}^2(\Omega))}
\\
\displaystyle
+  2 \mu_1 \bigl( C + \Vert \upsilon_0 \xi  \Vert_{L^p(0,T; V^p_{\Gamma_1})} \bigr)^{p-1} 
\bigl({\rm meas}(\Omega))T \bigr)^{\frac{2-p}{p}}
\Vert  \tilde \vartheta \zeta \Vert_{L^{p'}(0,T; V_{0}^{p'})}
+  \varepsilon^{1/p'} 
C^{p'-1} 
\Vert  \tilde \vartheta \zeta \Vert_{L^{p'}(0,T; V_{0}^{p'})}
\\
\displaystyle 
 +  \| \overline{f} \|_{L^{p'} (0,T; {\bf L}^2(\Omega))} \| \tilde \vartheta \zeta \|_{L^{p} (0,T; {\bf L}^2(\Omega))}
\end{array}
\end{eqnarray*}
and with (\ref{es_epsilon2}) we get
\begin{eqnarray*}\label{Smai_2epsilon}
\begin{array}{ll}
& \displaystyle 
\left| \left\langle \int_{\Omega} \pi_{\varepsilon} div(\tilde \vartheta) \, dx, \zeta \right\rangle_{{\mathcal D}'(0,T), {\mathcal D} (0,T)} \right|
\\
& 
\displaystyle \le \Bigl( 
\sqrt{T} \widetilde C' C
+  2 \mu_1 C_{\infty} T^{1/p'}  \bigl( C + \Vert \upsilon_0 \xi  \Vert_{L^p(0,T; V^p_{\Gamma_1})} \bigr)^{p-1} 
\bigl({\rm meas}(\Omega) T \bigr)^{\frac{2-p}{p}}
\\
& \displaystyle 
+  \varepsilon^{1/p'} C_{\infty} T^{1/p'}  
C^{p'-1} 
 + \widetilde C'  C_{\infty} T^{1/p}  \Vert \overline{f} \Vert_{L^{p'} (0,T; {\bf L}^2(\Omega))} 
  \Bigr)
  \Vert \tilde \vartheta \zeta \Vert_{H^1_0 (0,T; V_{0}^{p'})}
\end{array}
\end{eqnarray*}
where $C_{\infty}$ is the norm of the continuous injection of $H^1(0,T; \mathbb{R})$ into $L^{\infty}(0,T; \mathbb{R})$.

Moreover, for any $p'>1$,  there exists a linear and continuous operator $P_{p'}: L^{p'}_0(\Omega) \to {\bf W}^{1,p'}_0(\Omega)$ such that 
\begin{eqnarray*}
div \bigl( P_{p'} (\varpi) \bigr) = \varpi \qquad \forall \varpi \in L^{p'}_0(\Omega) 
\end{eqnarray*}
(see Corollary 3.1 in \cite{1'}).
 It follows that for any $\varpi \in L^{p'}_0(\Omega) $ and $\zeta \in {\mathcal D}(0,T)$ we have
\begin{eqnarray*}\label{Smai_2epsilon}
 \begin{array}{ll}
& \displaystyle \left| \left\langle \int_{\Omega} \pi_{\varepsilon} \varpi \, dx, \zeta \right\rangle_{{\mathcal D}'(0,T), {\mathcal D} (0,T)} \right|
\\
& 
\displaystyle \le \Vert P_{p'} \Vert_{{\mathcal L}(L^{p'}_0(\Omega), {\bf W}^{1,p'}_0(\Omega))}
\Bigl( 
\sqrt{T} \widetilde C' C
+  2 \mu_1 C_{\infty} T^{1/p'} \bigl( C + \Vert \upsilon_0 \xi  \Vert_{L^p(0,T; V^p_{\Gamma_1})} \bigr)^{p-1} 
\bigl({\rm meas}(\Omega) T \bigr)^{\frac{2-p}{p}}
\\
& \displaystyle
+  \varepsilon^{1/p'} C_{\infty} T^{1/p'} 
 C^{p'-1} 
 + \widetilde C'  C_{\infty} T^{1/p}  \| \overline{f} \|_{L^{p'} (0,T; {\bf L}^2(\Omega))} 
 \Bigr)
  \| \varpi \zeta \|_{H^1_0 (0,T; L^{p'} (\Omega))}.
\end{array}
\end{eqnarray*}
Hence there exists a real number $C'>0$, independent of $\varepsilon$, such that for all $\varepsilon \in (0,1]$  we have 
\begin{eqnarray*}
\displaystyle \left| \left\langle \int_{\Omega} \pi_{\varepsilon} \varpi \, dx, \zeta \right\rangle_{{\mathcal D}'(0,T), {\mathcal D} (0,T)} \right|
\le C' \| \varpi \zeta \|_{H^1_0 (0,T; L^{p'} (\Omega))} \quad \forall \varpi \in L^{p'}_0(\Omega), \  \forall \zeta \in {\mathcal D}(0,T).
\end{eqnarray*}

Furthermore, for any $\varpi^* \in L^{p'}(\Omega)$, we may define $\varpi \in L^{p'}_0(\Omega)$ by 
\begin{eqnarray*}
\varpi=\varpi^* -\frac{1}{{\rm meas}( \Omega) }\int_{\Omega} \varpi^* \,dx.
\end{eqnarray*}
We have $\Vert \varpi \Vert_{L^{p'}(\Omega)} \le 2 \Vert \varpi^* \Vert_{L^{p'}(\Omega)}$ and since $\pi_{\varepsilon} \in H^{-1} \bigl(0,T; L^p_0(\Omega) \bigr)$ we have
\begin{eqnarray*}
&& \displaystyle \left\langle \int_{\Omega} \pi_{\varepsilon} \left(\varpi^*-\frac{1}{{\rm meas}( \Omega)} \int_{\Omega}\varpi^* \,dx  \right) \, dx , \zeta \right\rangle_{{\mathcal D}'(0,T), {\mathcal D}(0,T)} \\
&& 
\displaystyle = \left\langle \int_{\Omega} \pi_{\varepsilon} \varpi^*   \, dx , \zeta \right\rangle_{{\mathcal D}'(0,T), {\mathcal D}(0,T)}
- \frac{1}{{\rm meas}( \Omega)} \left( \int_{\Omega}\varpi^* \,dx ) \right)  \left\langle \int_{\Omega} \pi_{\varepsilon}  \, dx , \zeta \right\rangle_{{\mathcal D}'(0,T), {\mathcal D}(0,T)}
\\
&& \displaystyle = \left\langle \int_{\Omega} \pi_{\varepsilon} \varpi^*   \, dx , \zeta \right\rangle_{{\mathcal D}'(0,T), {\mathcal D}(0,T)}.
\end{eqnarray*}
It follows that 
\begin{eqnarray*}
\begin{array}{ll}
\displaystyle \left| \left\langle \int_{\Omega} \pi_{\varepsilon} \varpi^* \, dx, \zeta \right\rangle_{{\mathcal D}'(0,T), {\mathcal D} (0,T)} \right|
= \displaystyle \left| \left\langle \int_{\Omega} \pi_{\varepsilon} \varpi \, dx, \zeta \right\rangle_{{\mathcal D}'(0,T), {\mathcal D} (0,T)} \right|
\\
\displaystyle \le 2 C' \| \varpi^* \zeta \|_{H^1_0 (0,T; L^{p'} (\Omega))} \quad \forall \varpi^* \in L^{p'} (\Omega), \  \forall \zeta \in {\mathcal D}(0,T).
\end{array}
\end{eqnarray*}
Finally we may conclude by using   the density of  $\mathcal{D}(0,T)\otimes L^{p'}(\Omega)$ into  $H_{0}^{1} \bigl(0,T;L^{p'}(\Omega) \bigr)$.    
\end{proof}

\begin{remark}
Let us emphasize once again that these estimates are independent of ${\bf u}$.
\end{remark}


\subsection{Existence and uniqueness result for $(P_{{\bf u}})$ } \label{convergence}

\bigskip




With the previous estimates  we infer that, by possibly extracting 
  a subsequence  still denoted  $(\overline{\upsilon}_{\varepsilon}, \pi_{\varepsilon})_{\varepsilon >0}$, there exists $\overline{\upsilon} \in L^{p}(0,T;V^p_{0.div}) \cap L^{\infty} \bigl(0,T;{\bf L}^2(\Omega) \bigr)$ such that $\displaystyle \frac{\partial \overline{\upsilon}}{\partial t} \in L^p \bigl( 0,T; (V_{0.div}^{p'})'  \bigr)$, and $ \pi \in H^{-1} \bigl( 0,T; L^{p}_0(\Omega) \bigr)$ such that 
 \begin{eqnarray}\label{conv_epsilon1}
  \overline{\upsilon}_{\varepsilon}\rightharpoonup  \overline{\upsilon} \quad \mbox{\rm weakly in $ L^{p}(0,T;V^p_{0.div})$} 
  \end{eqnarray}
   \begin{eqnarray}\label{conv_epsilon1bis}
  \overline{\upsilon}_{\varepsilon}\rightharpoonup  \overline{\upsilon} \quad \mbox{\rm weakly in $ L^{r} \bigl(0,T; {\bf L}^2 (\Omega) \bigr)$ for any $r>1$ and weakly$*$ in $L^{\infty} \bigl(0,T;{\bf L}^2(\Omega) \bigr)$}
  \end{eqnarray}
%
  \begin{eqnarray}\label{conv_epsilon4}
  \displaystyle \frac{\partial \overline{\upsilon}_{\varepsilon}}{\partial t} \rightharpoonup  \frac{\partial \overline{\upsilon}}{\partial t}  \quad \mbox{\rm weakly in $L^{p} \bigl(0,T;(V^{p'}_{0.div})' \bigr)$}
  \end{eqnarray}
  and 
  \begin{eqnarray}\label{conv_pression}
\pi _{\varepsilon}\rightharpoonup  \pi\quad \mbox{\rm weakly$*$ in  $H^{-1} \bigl( 0,T;L_{0}^{p}(\Omega) \bigr)$.}
\end{eqnarray}

Owing that $V_{0.div}^{r} \subset {\bf L}^2(\Omega) \subset (V_{0.div}^{r})'$ for any $r \ge 6/5$ with a compact  injection of $V_{0.div}^{r}$ into ${\bf L}^2(\Omega)$ whenever $r>6/5$,  we infer that $V_{0.div}^{p}$ is compactly embedded into $\bigl( V_{0.div}^{p'} \bigr)' $. Thus 
we may apply Aubin's lemma and, by possibly extracting 
  another subsequence  still denoted  $(\overline{\upsilon}_{\varepsilon}, \pi_{\varepsilon})_{\varepsilon >0}$, we have 
   \begin{eqnarray}\label{conv_epsilon10}
  \overline{\upsilon}_{\varepsilon} \longrightarrow \overline{\upsilon}  \quad \mbox{\rm strongly in $ L^{p} \bigl(0,T; (V_{0.div}^{p'})' \bigr)$.}
  \end{eqnarray}
   With  Simon's lemma we have also
  \begin{eqnarray}\label{conv_epsilon8}
  \overline{\upsilon}_{\varepsilon} \longrightarrow  \overline{\upsilon}  \quad \mbox{strongly in  $C \bigl([0,T]; \widetilde{H} \bigr)$}
  \end{eqnarray}
  where  $\widetilde{H}$ is a Banach space such that    $ {\bf L}^2(\Omega)\subset \widetilde{H}\subset( V_{0.div}^{p'})'$ and the embedding of  $ {\bf L}^2(\Omega)$ in  $\widetilde{H}$   is compact. 
It follows that 
\begin{eqnarray*}
\overline{\upsilon}_{\varepsilon} (t=0, \cdot) \longrightarrow \overline{\upsilon} (t=0, \cdot) = 0.
\end{eqnarray*}
Moreover the sequence $\bigl( \overline{\upsilon}_{\varepsilon} (T) \bigr)_{\varepsilon >0}$ is bounded in ${\bf L}^2(\Omega)$. The compact embedding of ${\bf L}^2(\Omega)$ into $\widetilde{H}$ combined with (\ref{conv_epsilon8}) implies that,
  by possibly extracting another subsequence  still denoted  $(\overline{\upsilon}_{\varepsilon}, \pi_{\varepsilon})_{\varepsilon >0}$, we have 
  \begin{eqnarray} \label{conv_epsilon8bis}
  \overline{\upsilon}_{\varepsilon} (T) \rightharpoonup \overline{\upsilon} (T) \quad \hbox{\rm weakly in ${\bf L}^2(\Omega)$ and strongly in $\widetilde H$.}
  \end{eqnarray}



Furthermore

\begin{lemma} \label{trace}
We have 
\begin{eqnarray} \label{conv_trace}
\gamma_p ( \overline{\upsilon}_{\varepsilon}) \longrightarrow \gamma_p ( \overline{\upsilon} ) \quad \mbox{\rm strongly in $L^p \bigl( 0,T; {\bf L}^p (\partial \Omega) \bigr)$}
\end{eqnarray}
where $\gamma_p$ is the trace operator form ${\bf W}^{1,p}(\Omega)$ into ${\bf L}^p (\partial \Omega) $.
\end{lemma}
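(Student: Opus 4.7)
The plan is to obtain strong convergence of $(\overline{\upsilon}_{\varepsilon})_{\varepsilon>0}$ in an intermediate fractional Sobolev space $L^{p}\bigl(0,T; \mathbf{W}^{s,p}(\Omega)\bigr)$ with $s \in (1/p, 1)$ via the Aubin-Lions lemma, and then to transfer this to the desired boundary convergence through the continuity of the trace operator on $\mathbf{W}^{s,p}(\Omega)$.

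For the first step, I would verify the embedding chain $V_{0.div}^{p} \hookrightarrow \mathbf{W}^{s,p}(\Omega) \hookrightarrow (V_{0.div}^{p'})'$, where the first injection is compact. Compactness of $\mathbf{W}^{1,p}(\Omega) \hookrightarrow \mathbf{W}^{s,p}(\Omega)$ on the bounded Lipschitz domain $\Omega$ follows from Rellich-Kondrachov for any $s \in (0,1)$, which yields compactness of $V_{0.div}^{p} \hookrightarrow \mathbf{W}^{s,p}(\Omega)$. For the second inclusion, $\mathbf{W}^{s,p}(\Omega) \hookrightarrow \mathbf{L}^{p}(\Omega)$, and identifying $\mathbf{L}^{p}(\Omega)$ with a subspace of $(V_{0.div}^{p'})'$ through the duality pairing (using $V_{0.div}^{p'} \hookrightarrow \mathbf{L}^{p'}(\Omega)$), the continuous embedding is obtained. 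I would then invoke the Aubin-Lions lemma: the uniform bound (\ref{203}) on $(\overline{\upsilon}_{\varepsilon})_{\varepsilon>0}$ in $L^{p}\bigl(0,T; V_{0.div}^{p}\bigr)$ together with the bound (\ref{es_derivee_epsilon}) on $(\partial_{t}\overline{\upsilon}_{\varepsilon})_{\varepsilon>0}$ in $L^{p}\bigl(0,T; (V_{0.div}^{p'})'\bigr)$ yield, up to extraction, strong convergence in $L^{p}\bigl(0,T; \mathbf{W}^{s,p}(\Omega)\bigr)$; uniqueness of the weak limit from (\ref{conv_epsilon1}) identifies this limit as $\overline{\upsilon}$, so the entire subsequence already fixed for the preceding convergences converges strongly.

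For the second step, I would fix $s$ in the interval $(1/p, 1)$, which is non-empty since $p>1$. By the classical fractional trace theorem, $\gamma_{p}$ extends to a continuous linear operator $\mathbf{W}^{s,p}(\Omega) \to \mathbf{W}^{s - 1/p, p}(\partial\Omega) \hookrightarrow \mathbf{L}^{p}(\partial\Omega)$; composing with the strong convergence delivered in the first step yields (\ref{conv_trace}). The main obstacle is the joint constraint on $s$: the Aubin-Lions step demands $s < 1$ for compactness of the Sobolev injection, whereas the trace step demands $s > 1/p$ so that the trace into $\mathbf{L}^{p}(\partial\Omega)$ remains bounded. The window $(1/p, 1)$ is non-empty exactly because $p>1$, which is why the vanishing viscosity approach suffices to reach the boundary convergence needed later for passing to the limit in the frictional functional $J(\overline{\upsilon}_{\varepsilon})$.
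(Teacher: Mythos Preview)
Your route via the intermediate space $\mathbf{W}^{s,p}(\Omega)$ with $s\in(1/p,1)$ is sound in spirit, but one step fails as written: the natural map $\mathbf{L}^{p}(\Omega)\to (V_{0.div}^{p'})'$ induced by the $L^p$--$L^{p'}$ pairing is \emph{not} injective (any gradient $\nabla q\in\mathbf{L}^{p}(\Omega)$ annihilates every $v\in V_{0.div}^{p'}$), so $\mathbf{W}^{s,p}(\Omega)$ does not embed into $(V_{0.div}^{p'})'$ and the Aubin--Lions triple $V_{0.div}^{p}\hookrightarrow \mathbf{W}^{s,p}(\Omega)\hookrightarrow (V_{0.div}^{p'})'$ is not a chain of injections. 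The repair is to replace $\mathbf{W}^{s,p}(\Omega)$ by the closure of $V_{0.div}^{p}$ in the $\mathbf{W}^{s,p}$-norm; on this divergence-free subspace the map to $(V_{0.div}^{p'})'$ is injective (via the Gelfand triple through $H$, using $p\ge 6/5$), and the rest of your argument goes through unchanged.

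The paper proceeds differently and more economically: rather than a second Aubin--Lions step in a fractional space, it proves directly an Ehrling-type inequality
\[
\|\gamma_p(v)\|_{\mathbf{L}^{p}(\partial\Omega)}\le \eta\,\|v\|_{V_{0.div}^{p}}+c_{\eta}\,\|v\|_{(V_{0.div}^{p'})'}\qquad(v\in V_{0.div}^{p})
\]
by a short contradiction argument using only the compactness of the trace $\gamma_p:\mathbf{W}^{1,p}(\Omega)\to\mathbf{L}^{p}(\partial\Omega)$. Applying this to $\overline{\upsilon}_{\varepsilon}-\overline{\upsilon}$, together with the uniform bound (\ref{203}) and the already established strong convergence (\ref{conv_epsilon10}) in $L^{p}\bigl(0,T;(V_{0.div}^{p'})'\bigr)$, yields (\ref{conv_trace}) at once. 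Your approach, once patched, trades this Ehrling step for fractional-Sobolev and fractional-trace machinery; the paper's argument is shorter and uses less, though yours would give the bonus of strong convergence in $L^{p}\bigl(0,T;\mathbf{W}^{s,p}(\Omega)\bigr)$.
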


\begin{proof}
Let us prove that for any  $ \eta>0$, there exists $c_{ \eta}>0$ such that
  \begin{eqnarray}\label{AuTema}
  \Vert \gamma_p(v) \Vert_{{\bf L}^{p}(\partial\Omega)}\leq  \eta  \Vert   v  \Vert_{V_{0.div}^p} 
  +c_{ \eta}\Vert  v \Vert_{(V_{0.div}^{p'})' } \quad \forall v \in V_{0.div}^p.
  \end{eqnarray}
Indeed if (\ref{AuTema}) is not true, there exists $\eta >0$ and a sequence $(v_m)_{m \ge 1}$ of $V_{0.div}^p$ such that
\begin{eqnarray*}
 \Vert \gamma_p (v_m) \Vert_{{\bf L}^{p}(\partial\Omega)} >  \eta  \Vert   v_m  \Vert_{V_{0.div}^p} 
  + m \, \Vert  v_m \Vert_{(V_{0.div}^{p'})' } \quad \forall m \ge 1.
  \end{eqnarray*}
It follows that $\Vert \gamma_p (v_m) \Vert_{{\bf L}^{p}(\partial\Omega)} >0$ for all $m \ge 1$. Since $\gamma_p \in {\mathcal L}_c( {\bf W}^{1,p}(\Omega), {\bf L}^p (\partial \Omega))$ and $V_{0.div}^p$ is a subspace of ${\bf W}^{1,p}(\Omega)$ we infer that $\Vert   v_m  \Vert_{V_{0.div}^p}  \not=0$ for all $m \ge 1$ and we may define $\displaystyle w_m = \frac{1}{\Vert   v_m  \Vert_{V_{0.div}^p} } v_m$ for all $m \ge 1$.

Thus $\Vert   w_m  \Vert_{V_{0.div}^p} =1$ for all $m \ge 1$ and the sequence $\bigl( \Vert \gamma_p(w_m) \Vert_{{\bf L}^p(\partial \Omega)} \bigr)_{m \ge 1}$ is bounded. Owing that  
\begin{eqnarray}  \label{AuTemabis}
 \Vert \gamma_p (w_m) \Vert_{{\bf L}^{p}(\partial\Omega)} >  \eta   
  + m \Vert  w_m \Vert_{(V_{0.div}^{p'})' } \ge \eta >0 \quad \forall m \ge 1
  \end{eqnarray}
  we infer  that 
  \begin{eqnarray}  \label{AuTemater}
  w_m \longrightarrow 0 \quad \hbox{\rm strongly in $(V_{0.div}^{p'})'$.}
  \end{eqnarray}
Moreover by possibly extracting a subsequence still denoted $( w_m)_{m \ge 1}$, we obtain that there exists $w_* \in V_{0.div}^p$ such that
\begin{eqnarray*}
w_m \rightharpoonup w_* \quad \hbox{\rm weakly in $V_{0.div}^p$}
\end{eqnarray*}
and with (\ref{AuTemater}) we get $w_*=0$. 
 Recalling that the trace operator $\gamma_p$ is compact from ${\bf W}^{1,p} (\Omega)$ into ${\bf L}^p(\partial \Omega)$ 
 it follows  that
 \begin{eqnarray*}
\gamma_p (w_m) \longrightarrow \gamma_p (w_*) =0 \quad \hbox{\rm strongly in $L^p(\partial \Omega)$.}
\end{eqnarray*}
which gives a contradiction with (\ref{AuTemabis}).





  By using (\ref{AuTema}) and (\ref{203}) we obtain that, for any $\eta >0$ there exists $c_{\eta}>0$ such that
   \begin{eqnarray*}
   \begin{array}{ll}
\displaystyle   \Vert \gamma_p (\overline{\upsilon}_{\varepsilon}) - \gamma_p (\overline{\upsilon}) \Vert_{L^p(0,T; {\bf L}^{p}(\partial\Omega)) }
  =  \Vert \gamma_p (\overline{\upsilon}_{\varepsilon} - \overline{\upsilon}) \Vert_{L^p( 0,T; {\bf L}^{p}(\partial\Omega))}
  \\
  \displaystyle 
  \leq  \eta  \Vert   \overline{\upsilon}_{\varepsilon} - \overline{\upsilon}  \Vert_{L^p(0,T; V_{0.div}^p)} 
  +c_{ \eta}\Vert  \overline{\upsilon}_{\varepsilon} - \overline{\upsilon}  \Vert_{L^p(0,T; (V_{0.div}^{p'})') } 
  \\
  \displaystyle 
  \leq  2 \eta C +  c_{\eta} \Vert  \overline{\upsilon}_{\varepsilon} - \overline{\upsilon}  \Vert_{L^p(0,T; (V_{0.div}^{p'})') }  \quad \forall \varepsilon >0
  \end{array}
  \end{eqnarray*}
 and (\ref{conv_epsilon10}) allows us to conclude.  
\end{proof}


\bigskip

For the sake of notational simplicity  we will identify the functions and their traces on $\partial \Omega$ in the rest of the paper.


Next we can prove that the limit pressure  $\pi$ and  $\displaystyle \frac{ \partial \overline{\upsilon}}{\partial t}$ satisfy better regularity properties.

 \begin{proposition} \label{coro 1.1}
 Let ${\bf u} \in L^ {p} \bigl(0,T; {\bf L}^p (\Omega) \bigr)$. Let     $f\in  L^ {p'} \bigl(0,T; {\bf L}^2(\Omega) \bigr)$, $k \in L^{p'} \bigl(0,T;L_{+}^{p'}(\Gamma_0) \bigr)$, $\mu$ satisfying (\ref{rop})-(\ref{mlo}), 
 $\theta \in  L^{\tilde q}\bigl(0,T;L^{\tilde p}(\Omega)\bigr)$ with $\tilde q \ge 1$ and $\tilde p \ge 1$, $s\in L^{ p} \bigl(0,T; {\bf L}^{p}(\Gamma_0)\bigr)$, $\xi\in W^{1,p'}(0,T)$ satisfying (\ref{xi})
and  $\upsilon_{0}\in \textbf{W}^{1,p }(\Omega)$ satisfying  (\ref{coco}). 
Then      $ \pi\in H^{-1} \bigl( 0,T;L_{0}^{p'}(\Omega) \bigr)$ and  $\displaystyle \frac{ \partial \overline{\upsilon}}{\partial t}\in   L^{p'} \bigl( 0,T;(V_{0.div}^p)' \bigr)$. 
 \end{proposition}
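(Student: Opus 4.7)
The plan is to bound the limits $\pi$ and $\partial \overline{\upsilon}/\partial t$ by reproducing, in the limit variational inequality and limit pressure identity obtained from (\ref{3.14}) and (\ref{toto_epsilon}) by letting $\varepsilon\to 0$, the estimates performed in Proposition \ref{apriori-estimates2}. Two improvements make the upgrade of regularity possible. First, the vanishing viscosity term disappears in the limit: its contribution against a test function $\tilde\vartheta\zeta \in L^{p'}(0,T;V_{0.div}^{p'})$ is bounded by $\varepsilon^{1/p'}\bigl(\varepsilon^{1/p'}\|\overline{\upsilon}_\varepsilon\|_{L^{p'}(0,T;V^{p'}_{0.div})}\bigr)^{p'-1}\|\tilde\vartheta\zeta\|_{L^{p'}(0,T;V^{p'}_{0.div})}$, which vanishes by (\ref{es_epsilon1}). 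Second, by (\ref{F1}) and $\overline{\upsilon}+\upsilon_0\xi \in L^p(0,T;V^p_{\Gamma_1})$, the nonlinear term satisfies
\begin{eqnarray*}
\mathcal{F}\bigl(\theta,{\bf u}+\upsilon_0\xi,D(\overline{\upsilon}+\upsilon_0\xi)\bigr) \in L^{p'}\bigl(0,T;(L^{p'}(\Omega))^{3\times 3}\bigr),
\end{eqnarray*}
which is the sharp integrability needed to pair it with test functions in $\mathbf{W}^{1,p'}_0(\Omega)$.

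The first step is to pass to the limit in (\ref{3.14bis}) for test functions $\overline{\varphi}=\tilde\vartheta\zeta$ with $\tilde\vartheta \in V^{p'}_{0.div}$ and $\zeta \in \mathcal{D}(0,T)$, and in (\ref{toto_epsilon}) with $\tilde\vartheta \in \mathbf{W}^{1,p'}_0(\Omega)$. The linear terms converge via (\ref{conv_epsilon1})--(\ref{conv_pression}); the friction term via Lemma \ref{trace}; the viscosity term vanishes as above; and the nonlinear $\mathcal{F}$-term is identified with $\mathcal{F}(\theta,{\bf u}+\upsilon_0\xi,D(\overline{\upsilon}+\upsilon_0\xi))$ by a Minty--Browder monotonicity argument that combines (\ref{rop})--(\ref{m5}), Lemma 1 in \cite{BDP1}, and the energy identity at $t=T$ obtained from (\ref{conv_epsilon8})--(\ref{conv_epsilon8bis}).

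Second, to establish (\ref{es_derivee_epsilon}) for the limit, I test the limit of (\ref{3.14bis}) with $\pm\tilde\vartheta\zeta$, $\tilde\vartheta \in V^{p'}_{0.div}$, $\zeta \in \mathcal{D}(0,T)$: the friction term is controlled by $\|\gamma_p\|_{\mathcal{L}(\mathbf{W}^{1,p}(\Omega),\mathbf{L}^p(\partial\Omega))}\|k\|_{L^{p'}(0,T;L^{p'}(\Gamma_0))}\|\tilde\vartheta\zeta\|_{L^p(0,T;V^p_{0.div})}$; the $\mathcal{F}$-term by Hölder combined with (\ref{F1}), yielding $2\mu_1\|\overline{\upsilon}+\upsilon_0\xi\|_{L^p(0,T;V^p_{\Gamma_1})}^{p-1}\|\tilde\vartheta\zeta\|_{L^p(0,T;V^p_{0.div})}$; and the force term through the continuous injection $V^p_0 \hookrightarrow \mathbf{L}^2(\Omega)$. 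Since every bound is linear in $\|\tilde\vartheta\zeta\|_{L^p(0,T;V^p_{0.div})}$, density of $\mathcal{D}(0,T)\otimes V^{p'}_{0.div}$ in $L^p(0,T;V^p_{0.div})$ (via smooth divergence-free vector fields) yields $\partial \overline{\upsilon}/\partial t \in L^{p'}(0,T;(V^p_{0.div})')$.

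Third, for the pressure, given $\varpi \in L^{p'}_0(\Omega)$ I set $\tilde\vartheta = P_{p'}(\varpi) \in \mathbf{W}^{1,p'}_0(\Omega)$ using the Nečas lifting of Corollary 3.1 in \cite{1'}, and insert it into the limit pressure identity. Each right-hand side term is then bounded by $\|\varpi\zeta\|_{H^1_0(0,T;L^{p'}(\Omega))}$: the time-derivative contribution uses $\overline{\upsilon} \in L^\infty(0,T;\mathbf{L}^2(\Omega))$ together with the Sobolev embedding $\mathbf{W}^{1,p'}(\Omega) \hookrightarrow \mathbf{L}^2(\Omega)$ (recall $p' > 2$); the nonlinear term uses the improved integrability above combined with $L^{p'}(0,T) \hookrightarrow L^p(0,T)$ and $H^1(0,T;\mathbb{R}) \hookrightarrow L^\infty(0,T;\mathbb{R})$; and the force term similarly. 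The extension from $L^{p'}_0(\Omega)$ to $L^{p'}(\Omega)$ by subtracting the mean value, and the density of $\mathcal{D}(0,T)\otimes L^{p'}(\Omega)$ in $H^1_0(0,T;L^{p'}(\Omega))$, carried out exactly as in the last paragraph of the proof of Proposition \ref{apriori-estimates2}, yield $\pi \in H^{-1}(0,T;L^{p'}_0(\Omega))$. The main obstacle is the Minty--Browder step in the first part, since only weak convergence of $D(\overline{\upsilon}_\varepsilon)$ in $(L^p)^{3\times 3}$ is available, and identifying the weak limit of $\mathcal{F}(\theta,{\bf u}+\upsilon_0\xi,D(\overline{\upsilon}_\varepsilon+\upsilon_0\xi))$ requires carefully exploiting the limit energy identity; without this identification the limit pressure identity has no meaning and the pressure estimate cannot even be formulated.
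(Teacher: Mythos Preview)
Your proposal contains a genuine circularity. You want to identify the weak limit of $\mathcal{F}\bigl(\theta,{\bf u}+\upsilon_0\xi,D(\overline{\upsilon}_\varepsilon+\upsilon_0\xi)\bigr)$ via a Minty--Browder argument \emph{before} establishing the regularity of $\partial_t\overline{\upsilon}$ and $\pi$. But the Minty step (as carried out in the paper's proof of Theorem~\ref{existence_Pu}) hinges on the energy identity $\int_0^T\langle\partial_t\overline{\upsilon},\overline{\upsilon}\rangle_{(V^p_{0.div})',V^p_{0.div}}\,dt=\tfrac12\Vert\overline{\upsilon}(T)\Vert_{{\bf L}^2(\Omega)}^2$, and for this pairing even to make sense one needs $\partial_t\overline{\upsilon}\in L^{p'}\bigl(0,T;(V^p_{0.div})'\bigr)$, which is precisely the conclusion of the proposition. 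Without it you only have $\partial_t\overline{\upsilon}\in L^{p}\bigl(0,T;(V^{p'}_{0.div})'\bigr)$ from (\ref{conv_epsilon4}), and the diagonal argument (choosing $\overline{\varphi}=\overline{\upsilon}_{\varepsilon'}$ and letting $\varepsilon'\to0$) fails because $\overline{\upsilon}_{\varepsilon'}\rightharpoonup\overline{\upsilon}$ only weakly in $L^p(0,T;V^p_{0.div})$, not in $L^{p'}(0,T;V^{p'}_{0.div})$. Your claim that ``without this identification the limit pressure identity has no meaning'' is in fact wrong: the paper passes to the limit in the \emph{estimates} themselves, not in the variational formulation. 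The $\mathcal{F}$-contribution in (\ref{toto_epsilon}) is bounded by $2\mu_1(C+\Vert\upsilon_0\xi\Vert)^{p-1}\Vert\tilde\vartheta\zeta\Vert_{H^1_0(0,T;V^p_0)}$ uniformly in $\varepsilon$, the $\varepsilon$-term vanishes, and the left-hand side converges by (\ref{conv_pression}); no identification of the nonlinear limit is needed.

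There is also a secondary gap in your Step~2: you invoke density of $\mathcal{D}(0,T)\otimes V^{p'}_{0.div}$ in $L^p(0,T;V^p_{0.div})$ ``via smooth divergence-free vector fields'', but density of $V^{p'}_{0.div}$ in $V^p_{0.div}$ with these mixed boundary conditions is not obvious and you give no argument. The paper avoids this by working with non-divergence-free test functions $\tilde\vartheta\in V^{p'}_0$, proving the density of $V^{p'}_0$ in $V^p_0$ via an explicit reflection argument adapted to the domain, and carrying the pressure term $\langle\int_\Omega\pi\,div(\tilde\vartheta),\zeta\rangle$ along; this term is then bounded using the pressure regularity $\pi\in H^{-1}(0,T;L^{p'}_0(\Omega))$, which is why the paper establishes the pressure estimate \emph{first} and the time-derivative estimate second---the opposite of your order.
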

 
 \begin{proof}
Let us prove that   $ \pi\in H^{-1} \bigl( 0,T;L_{0}^{p'}(\Omega) \bigr)$. Indeed, for all $\varepsilon \in (0,1]$, for all $\tilde \vartheta \in {\bf W}^{1, p'}_0(\Omega)$ and for all $\zeta \in {\mathcal D}(0,T)$ we have (\ref{toto_epsilon}) and thus
\begin{eqnarray*}\label{Smai_2epsilon}
\begin{array}{ll}
& \displaystyle 
\left| \left\langle \int_{\Omega} \pi_{\varepsilon} div(\tilde \vartheta) \, dx, \zeta \right\rangle_{{\mathcal D}'(0,T), {\mathcal D} (0,T)} \right|
\le \Bigl( 
\sqrt{T} C \widetilde C
+  2 \mu_1 C_{\infty} T^{1/p'} \bigl( C + \Vert \upsilon_0 \xi  \Vert_{L^p(0,T; V^p_{\Gamma_1})} \bigr)^{p-1} 
\\
& \displaystyle 
+ \widetilde C \Vert \overline{f} \Vert_{L^{p'} (0,T; {\bf L}^2(\Omega))} 
 C_{\infty} T^{1/p}  \Bigr)
  \Vert \tilde \vartheta \zeta \Vert_{H^1_0 (0,T; V_{0}^{p})}
+   \varepsilon^{1/p'} C_{\infty} T^{1/p'} 
C^{p'-1} 
  \Vert \tilde \vartheta \zeta \Vert_{H^1_0 (0,T; V_{0}^{p'})}
\end{array}
\end{eqnarray*}
where $C>0$ is  the constant introduced  in Proposition \ref{apriori-estimates1} and $\widetilde C$ and $C_{\infty}$ are the norms of the continuous injections of $V_0^p$ into ${\bf L}^2(\Omega)$ and $H^1(0,T; \mathbb{R})$ into $L^{\infty} (0,T; \mathbb{R})$ respectively.
We may pass to the limit as $\varepsilon$ tends to zero in the previous inequality and we get
\begin{eqnarray*}
\begin{array}{ll}
& \displaystyle 
\left| \left\langle \int_{\Omega}  \pi div(\tilde \vartheta) \, dx, \zeta \right\rangle_{{\mathcal D}'(0,T), {\mathcal D} (0,T)} \right|
\le \Bigl( 
\sqrt{T} \widetilde C C
+  2 \mu_1 C_{\infty} T^{1/p'}  \bigl( C + \Vert \upsilon_0 \xi  \Vert_{L^p(0,T; V^p_{\Gamma_1})} \bigr)^{p-1} 
\\
& \displaystyle 
 + \widetilde C \Vert \overline{f} \Vert_{L^{p'} (0,T; {\bf L}^2(\Omega))} 
 C_{\infty} T^{1/p}  \Bigr)
  \Vert \tilde \vartheta \zeta \Vert_{H^1_0 (0,T; V_{0}^{p})}.
\end{array}
\end{eqnarray*}
By using Green's formula we infer that 
\begin{eqnarray*}
\begin{array}{ll}
& \displaystyle 
\left| \left\langle \int_{\Omega} \bigl( \nabla   \pi , \tilde \vartheta \bigr) \, dx, \zeta \right\rangle_{{\mathcal D}'(0,T), {\mathcal D} (0,T)} \right|
\le \Bigl( 
\sqrt{T} \widetilde C C
+  2 \mu_1 C_{\infty} T^{1/p'}  \bigl( C + \Vert \upsilon_0 \xi  \Vert_{L^p(0,T; V^p_{\Gamma_1})} \bigr)^{p-1} 
\\
& \displaystyle 
 + \widetilde C \Vert \overline{f} \Vert_{L^{p'} (0,T; {\bf L}^2(\Omega))} 
 C_{\infty} T^{1/p}  \Bigr)
  \Vert \tilde \vartheta \zeta \Vert_{H^1_0 (0,T; {\bf W}_{0}^{1,p} (\Omega) )}
\end{array}
\end{eqnarray*}
for all $\tilde \vartheta \in \bigl({\mathcal D}(\Omega) \bigr)^3$ and for all $\zeta \in {\mathcal D}(0,T)$, where $(\cdot, \cdot)$ denotes the Euclidean inner product of $\mathbb{R}^3$.

  With the density of  $\mathcal{D}(0,T)\otimes {\bf W}_{0}^{1,p}(\Omega)$ into  $H_{0}^{1} \bigl(0,T;{\bf W}_{0}^{1,p}(\Omega) \bigr)$,     we conclude that  $\nabla \pi \in H^{-1}\bigl(0,T;{\bf W} ^{-1,p'}(\Omega)  \bigr)$. Then    the properties of the gradient operator   (\cite{1'})   imply that  $ \pi \in H^{-1} \bigl(0,T; L^{p'}(\Omega) \bigr)$.  Owing that  $ \pi \in H^{-1} \bigl(0,T; L^{p}_0(\Omega) \bigr)$ we obtain finally that $ \pi \in H^{-1} \bigl(0,T; L^{p'}_0(\Omega) \bigr)$.

\bigskip

Let us  prove now that $\displaystyle  \frac{\partial \overline{\upsilon}}{\partial t} \in L^{ p'} \bigl( 0,T;(V_{0.div}^p)' \bigr)$. We choose  $\overline{\varphi} = \pm \tilde\vartheta \zeta$   with  $\tilde\vartheta\in V^{p'}_{0}$ and $\zeta\in \mathcal{D}(0,T)$ in (\ref{3.14}) and  we obtain
\begin{eqnarray*}
\begin{array}{ll}
\displaystyle{ \left\langle \frac{\partial}{\partial t} (\overline{\upsilon}_{\varepsilon} , \pm \tilde \vartheta )_{{\bf L}^2(\Omega)} ,  \zeta \right\rangle_{ {\mathcal D}'(0,T), {\mathcal D}(0,T)} 
+ \int_{0}^{T} \int_{\Omega} \mathcal{F} \bigl( \theta, {\bf u} + \upsilon_0 \xi, D( \overline{\upsilon}_{\varepsilon} + \upsilon_0 \xi) \bigr) : D( \pm \tilde\vartheta) \zeta \, dx dt } 
 \\
\displaystyle{
+ 2 \varepsilon \int_0^T \int_{\Omega} \bigl| D(\overline{\upsilon}_{\varepsilon} )  \bigr|^{p'-2} D(\overline{\upsilon}_{\varepsilon} )  : D(\pm  \tilde\vartheta) \zeta \, dx dt
- \left\langle \int_{\Omega} \pi_{\varepsilon} div(\pm \tilde \vartheta) \, dx, \zeta \right\rangle_{{\mathcal D}'(0,T), {\mathcal D}(0,T)} }
\\
\displaystyle 
+J(\overline{\upsilon}_{\varepsilon} \pm \tilde \vartheta \zeta)
-J(\overline{\upsilon}_{\varepsilon} )
\geq  \int_0^T ( \overline{f},\pm\tilde \vartheta \zeta)_{{\bf L}^2(\Omega)} \, dt .
\end{array}
\end{eqnarray*}
By recalling that $\overline{f} \in L^{p'} \bigl(0,T; {\bf L}^2(\Omega) \bigr) $ and $k \in L^{p'} \bigl(0,T; { L}^{p'}(\Gamma_0) \bigr) $, we perform  the same kind of computations as in Proposition \ref{apriori-estimates2} and we get
\begin{eqnarray*}
\begin{array}{ll}
& \displaystyle{ \left| \int_0^T ( \overline{\upsilon}_{\varepsilon}  , \tilde \vartheta)_{{\bf L}^2(\Omega)} \zeta' \, dt \right| } 
 \le 
\| \gamma_{p} \|_{{\mathcal L} ({\bf W}^{1,p} (\Omega) , {\bf L}^{p} (\partial \Omega))} \| k\|_{L^{p'}(0,T; L^{p'}(\Gamma_0))} \| \tilde \vartheta \zeta \|_{L^{p} (0,T; V_{0}^{p})} 
\\
& \displaystyle 
+ \widetilde C \| \overline{f} \|_{L^{p'} (0,T; {\bf L}^2(\Omega))} \| \tilde \vartheta \zeta \|_{L^{p} (0,T; V_{0}^{p})}
+ \left|  \int_{0}^{T} \int_{\Omega} \mathcal{F} \bigl( \theta, {\bf u} + \upsilon_0 \xi, D( \overline{\upsilon}_{\varepsilon} + \upsilon_0 \xi) \bigr) : D(  \tilde\vartheta) \zeta \, dx dt \right| 
\\
&
\displaystyle 
+ 
2 \varepsilon \left|  \int_0^T \int_{\Omega} \bigl| D(\overline{\upsilon}_{\varepsilon} )  \bigr|^{p'-2} D(\overline{\upsilon}_{\varepsilon} )  : D(  \tilde\vartheta) \zeta \, dx dt \right|
+ \,  \left|  \left\langle \int_{\Omega} \pi_{\varepsilon} div( \tilde \vartheta) \, dx, \zeta \right\rangle_{{\mathcal D}'(0,T), {\mathcal D}(0,T)}  \right|
\end{array}
\end{eqnarray*}
where $\widetilde C$ denotes the norm of the continuous injection of $V_{0}^{p}$ into ${\bf L}^2(\Omega)$ and 
\begin{eqnarray*}
\begin{array}{ll}
& \displaystyle
\left|  \int_{0}^{T} \int_{\Omega} \mathcal{F} \bigl( \theta, {\bf u} + \upsilon_0 \xi, D( \overline{\upsilon}_{\varepsilon} + \upsilon_0 \xi) \bigr) : D(  \tilde\vartheta) \zeta \, dx dt \right|
+ 
2 \varepsilon \left|  \int_0^T \int_{\Omega} \bigl| D(\overline{\upsilon}_{\varepsilon} )  \bigr|^{p'-2} 
D(\overline{\upsilon}_{\varepsilon})  : D(  \tilde\vartheta) \zeta \, dx dt \right|
\\
& 
\displaystyle 
\le 
2 \mu_1 \Bigl( \Vert  \overline{\upsilon}_{\varepsilon} \Vert_{L^p(0,T; V_{0.div}^p)}  
+ \Vert \upsilon_0 \xi  \Vert_{L^p(0,T; V^p_{\Gamma_1})} \Bigr)^{p-1} 
\Vert \tilde \vartheta \zeta \Vert_{L^{p}(0,T; V_{0}^{p})} 
\\
& \displaystyle 
+ 
\varepsilon^{1/p'} 
\bigl( \varepsilon^{1/p'} \Vert  \overline{\upsilon}_{\varepsilon} \Vert_{L^{p'}(0,T; V_{0.div}^{p'})} \bigr)^{p'-1} 
\Vert  \tilde \vartheta \zeta \Vert_{L^{p'}(0,T; V_{0}^{p'})}
\\
& \displaystyle \le 
2 \mu_1 \Bigl( C 
+ \Vert \upsilon_0 \xi  \Vert_{L^p(0,T; V^p_{\Gamma_1})} \Bigr)^{p-1} 
\Vert \tilde \vartheta \zeta \Vert_{L^{p}(0,T; V_{0}^{p})} 
+ 
\varepsilon^{1/p' }
 C^{p'-1} 
\Vert  \tilde \vartheta \zeta \Vert_{L^{p'}(0,T; V_{0}^{p'})}.
\end{array}
\end{eqnarray*}
We pass to the limit as $\varepsilon$ tends to zero. We obtain
\begin{eqnarray*}
\begin{array}{ll}
& \displaystyle{ \left| \int_0^T ( \overline{\upsilon}  , \tilde \vartheta)_{{\bf L}^2(\Omega)} \zeta' \, dt \right| } 
 \le 
\| \gamma_{p} \|_{{\mathcal L} ({\bf W}^{1,p} (\Omega) , {\bf L}^{p} (\partial \Omega))} \| k\|_{L^{p'}(0,T; L^{p'}(\Gamma_0))} \| \tilde \vartheta \zeta \|_{L^{p} (0,T; V_{0}^{p})} 
\\
& 
\displaystyle
+ \widetilde C \| \overline{f} \|_{L^{p'} (0,T; {\bf L}^2(\Omega))} \| \tilde \vartheta \zeta \|_{L^{p} (0,T; V_{0}^{p})}
+ 
2 \mu_1 \Bigl( C
+ \Vert \upsilon_0 \xi  \Vert_{L^p(0,T; V^p_{\Gamma_1})} \Bigr)^{p-1} 
\Vert \tilde \vartheta \zeta \Vert_{L^{p}(0,T; V_{0}^{p})} 
\\
& 
\displaystyle 
+  \left|  \left\langle \int_{\Omega}  \pi div( \tilde \vartheta) \, dx, \zeta \right\rangle_{{\mathcal D}'(0,T), {\mathcal D}(0,T)}  \right|.
\end{array}
\end{eqnarray*}
Moreover we know that $ \pi \in H^{-1} \bigl( 0,T; L^{p'}_0(\Omega) \bigr)$. It follows that 
\begin{eqnarray*}
\left|  \left\langle \int_{\Omega}  \pi div( \tilde \vartheta) \, dx, \zeta \right\rangle_{{\mathcal D}'(0,T), {\mathcal D}(0,T)}  \right| 
\le  \Vert  \pi \Vert_{H^{-1}(0,T; L^{p'}(\Omega))} \Vert div (\tilde \vartheta) \zeta \Vert_{H^1_0(0,T; L^p(\Omega))}
\end{eqnarray*}
and there exists a real number $C''>0$ such that
\begin{eqnarray} \label{estim_vitesse}
\begin{array}{ll}
&  \displaystyle{ \left| \int_0^T ( \overline{\upsilon}  , \tilde \vartheta)_{{\bf L}^2(\Omega)} \zeta' \, dt \right| } 
 \le C''  \Vert \tilde \vartheta \zeta \Vert_{L^p(0,T; V_0^p)}
+  \Vert  \pi \Vert_{H^{-1}(0,T; L^{p'}(\Omega))} \Vert div (\tilde \vartheta) \zeta \Vert_{H^1_0(0,T; L^p(\Omega))}
\\
& \displaystyle 
\le   C''  \Vert \tilde \vartheta \zeta \Vert_{L^p(0,T; V_0^p)}
 + 3  \Vert  \pi \Vert_{H^{-1}(0,T; L^{p'}(\Omega))} \Vert \tilde \vartheta \zeta \Vert_{H^1_0(0,T; V_0^p)}
\end{array}
\end{eqnarray}
for all $\tilde \vartheta \in V_0^{p'}$ and $\zeta \in {\mathcal D}(0,T)$.

\bigskip
 
Furthermore  $V_{0}^{p'}$ is  dense into  $V_{0}^p$. Indeed, by definition of $V_{0}^p$ we have   $\varphi=(\varphi_{1},\varphi_{2},\varphi_{3})\in V_{0}^p$ if and only if $\varphi=(\varphi_{1},\varphi_{2},\varphi_{3})\in W^{1,p}_{\Gamma_{1}\cup\Gamma_{L}}(\Omega) \times W^{1,p}_{\Gamma_{1}\cup\Gamma_{L}}(\Omega) \times W^{1,p}_0(\Omega)$ where 
\begin{eqnarray*}
W^{1,p}_{\Gamma_{1}\cup\Gamma_{L}}(\Omega) =\bigl\{ w\in  {W}^{1,p}(\Omega)  ; \   w=0 \  \mbox{on}\ \Gamma_{1}\cup\Gamma_{L} \bigr\}.
\end{eqnarray*}
Let us define 
\begin{eqnarray*}
W=\bigl\{ w \in  \mathcal{D}(\overline{\Omega}) ; \   w =0 \  \mbox{on} \  \Gamma_{1}\cup\Gamma_{L} \bigr\}
\end{eqnarray*}
and let $Q_+$ and $Q$ be given as
 \begin{eqnarray*}
 \begin{array}{ll}
 & \displaystyle   Q_ {+}=
\Omega =\bigl\{ (x',x_{3}) \in \mathbb{R}^{3}; \   x'=(x_{1},x_{2}) \in \omega \  \mbox{and} \   0<x_{3}< h(x')\bigr\}, \\
 & \displaystyle   
Q =\bigl\{ (x',x_{3}) \in \mathbb{R}^{3}; \  x'=(x_{1},x_{2})\in \omega \ \mbox{and} \  |x_{3} | < h(x')  \bigr\}.
\end{array}
\end{eqnarray*}


For any $u \in W^{1,p} (\Omega)$ we build the extension $u^*$ of $u$ to $Q$ by reflexion, i.e.
\begin{eqnarray*}
u^{*}(x',x_{3})= 
\left\{
\begin{array}{ll}
u(x',x_{3}) & \textrm{if $x_{3}>0$} ,\\
u(x',-x_{3}) & \textrm{if $x_{3}<0$}.\\ 
\end{array} 
\right.
\end{eqnarray*}
With classical results we know that $u^* \in W^{1,p}(Q)$. 
Hence for all $u \in W^{1,p}_{\Gamma_{1}\cup\Gamma_{L}}(\Omega)$ we have $u^* \in W^{1,p}_0(Q)$. Since ${\mathcal D}(Q)$ is dense into $W^{1,p}_0(Q)$ there exists a sequence $(v_n)_{n \ge 1}$ such that $v_n \in {\mathcal D}(Q)$ for all $n \ge 1$ and 
\begin{eqnarray*}
v_n \longrightarrow u^* \quad \hbox{\rm strongly in $W^{1,p}_0(Q)$.}
\end{eqnarray*}
It follows that $u_n = {v_n}_{|Q_+} \in W$
for all $n \ge 1$ and 
\begin{eqnarray*}
u_n \longrightarrow u \quad \hbox{\rm strongly in $W^{1,p}_{\Gamma_{1}\cup\Gamma_{L}}(\Omega)$.}
\end{eqnarray*}
We infer that $W$ is dense into $W^{1,p}_{\Gamma_{1}\cup\Gamma_{L}}(\Omega)$ and $W \times W \times {\mathcal D} (\Omega)$ is dense into $V^p_0$. Since $W \times W \times {\mathcal D}(\Omega) \subset V^{p'}_0 \subset V^p_0$ we obtain the announced density result.

\bigskip

Going back to (\ref{estim_vitesse}) we obtain
\begin{eqnarray*} 
 \displaystyle{ \left| \int_0^T ( \overline{\upsilon}  , \tilde \vartheta)_{{\bf L}^2(\Omega)} \zeta' \, dt \right| }
  \le C''  \Vert \tilde \vartheta \zeta \Vert_{L^p(0,T; V_0^p)}
 +   \Vert  \pi \Vert_{H^{-1}(0,T; L^{p'}(\Omega))} \Vert div( \tilde \vartheta) \zeta \Vert_{H^1_0(0,T; L^p (\Omega) )}
\end{eqnarray*}
for all $\tilde \vartheta \in V_0^p$ and for all $\zeta \in {\mathcal D}(0,T)$. Moreover if $\tilde \vartheta \in V_{0.div}^p$ we get 
\begin{eqnarray*} 
 \displaystyle{ \left| \int_0^T ( \overline{\upsilon}  , \tilde \vartheta)_{{\bf L}^2(\Omega)} \zeta' \, dt \right| } \le C''  \Vert \tilde \vartheta \zeta \Vert_{L^p(0,T; V_0^p)} \quad \forall \tilde \vartheta \in V_{0.div}^p, \ \forall \zeta \in {\mathcal D}(0,T).
\end{eqnarray*}
With the density of  $\mathcal{D}(0,T)\otimes V_{0.div}^p$ in  $L^{p}(0,T;V_{0.div}^p)$ we may conclude  that  $\displaystyle \frac{\partial \overline{\upsilon}}{\partial t} \in    L^{ p'} \bigl( 0,T;(V_{0.div}^p)' \bigr)$. 
\end{proof}

\bigskip

Let us recall the definition of the space ${\mathcal Y}$ already introduced in Section \ref{framework}
\begin{eqnarray*}
{\mathcal Y} = \bigl\{ \psi \in {\bf L}^2(\Omega); \ div(\psi) \in L^2(\Omega) \bigr\}
\end{eqnarray*}
 endowed with its canonical norm
 \begin{eqnarray*}
 \Vert \psi \Vert_{\mathcal Y} = \Bigl( \Vert \psi \Vert^2_{{\bf L}^2(\Omega)} + \Vert div(\psi) \Vert^2_{{ L}^2(\Omega)} \Bigr)^{1/2} \qquad \forall \psi \in {\mathcal Y} 
 \end{eqnarray*}
 and  its subspace $H$ given by 
 $H= \bigl\{ \psi \in {\bf L}^2(\Omega); \ div(\psi) = 0 \ {\rm in } \ \Omega, \ \psi \cdot n = 0 \ {\rm on } \ \partial \Omega \bigr\}$.
For any $p \ge 6/5$ the  embedding of $V^p_{0.div}$ into $H$ is continuous and dense (\cite{3'}). Hence we may consider the Gelfand triplet $V^p_{0.div} \subset H \equiv H' \subset  (V^p_{0.div} )'$ and we obtain that $\overline{\upsilon} \in C \bigl( [0,T]; H \bigr)$.






 \begin{theorem} \label{existence_Pu}
 Let ${\bf u} \in L^ {p} \bigl(0,T; {\bf L}^p (\Omega) \bigr)$. Let     $f\in  L^ {p'} \bigl(0,T; {\bf L}^2(\Omega) \bigr)$, $k \in L^{p'} \bigl(0,T;L_{+}^{p'}(\Gamma_0) \bigr)$, $\mu$ satisfying (\ref{rop})-(\ref{mlo}), 
 $\theta \in  L^{\tilde q}\bigl(0,T;L^{\tilde p}(\Omega)\bigr)$ with $\tilde q \ge 1$ and $\tilde p \ge 1$, $s\in L^{ p} \bigl(0,T; {\bf L}^{p}(\Gamma_0)\bigr)$, $\xi\in W^{1,p'}(0,T)$ satisfying (\ref{xi})
and  $\upsilon_{0}\in \textbf{W}^{1,p }(\Omega)$ satisfying  (\ref{coco}). 
Then     problem (P${}_{\bf u}$) admits an unique solution. 
 \end{theorem}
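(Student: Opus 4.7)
The plan is to pass to the limit as $\varepsilon \to 0^+$ in the approximate variational inequality of Proposition \ref{existence-epsilon} to obtain existence, and then use a monotonicity argument for uniqueness. Combining the a priori estimates of Propositions \ref{apriori-estimates1} and \ref{apriori-estimates2} with the compactness results (\ref{conv_epsilon1})--(\ref{conv_epsilon8bis}), Lemma \ref{trace}, and the regularity established in Proposition \ref{coro 1.1}, a candidate limit $(\overline{\upsilon}, \pi)$ with all the regularity required by $(P_{{\bf u}})$ is already available, and the initial condition $\overline{\upsilon}(0) = 0$ follows from (\ref{conv_epsilon8}) and (\ref{3.15}). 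Only the variational inequality remains to be verified.

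Starting from (\ref{3.14}) with test function $\overline{\varphi} = \tilde\vartheta \zeta$, $\tilde\vartheta \in V^{p'}_0$, $\zeta \in \mathcal{D}(0,T)$, the time derivative term passes to the limit via (\ref{conv_epsilon4}), the right-hand side via (\ref{conv_epsilon1bis}), the pressure term via (\ref{conv_pression}) (noting that $div(\tilde\vartheta)\zeta \in H^1_0(0,T; L^p(\Omega))$), and the $J$-terms converge to $J(\overline{\upsilon} + \tilde\vartheta\zeta) - J(\overline{\upsilon})$ by the strong trace convergence of Lemma \ref{trace}. The additional viscosity term vanishes since
\begin{eqnarray*}
2\varepsilon \left| \int_0^T \int_\Omega \bigl|D(\overline{\upsilon}_\varepsilon) \bigr|^{p'-2} D(\overline{\upsilon}_\varepsilon) : D(\tilde\vartheta)\zeta \, dx dt \right|
& \le & 2 \varepsilon^{1/p'} \bigl( \varepsilon^{1/p'} \Vert \overline{\upsilon}_\varepsilon \Vert_{L^{p'}(0,T; V^{p'}_{0.div})} \bigr)^{p'-1} \Vert \tilde\vartheta\zeta \Vert_{L^{p'}(0,T; V^{p'}_{0.div})}
\end{eqnarray*}
tends to zero by (\ref{es_epsilon1}).

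The main obstacle is the nonlinear term $\int_0^T \int_\Omega \mathcal{F}(\theta, {\bf u} + \upsilon_0\xi, D(\overline{\upsilon}_\varepsilon + \upsilon_0\xi)) : D(\tilde\vartheta) \zeta \, dx dt$ since only weak convergence of $D(\overline{\upsilon}_\varepsilon)$ is available. I plan to resolve this by a Minty-type argument: using (\ref{3.14bis}) with $\overline{\varphi} = w - \overline{\upsilon}_\varepsilon$, $w \in L^{p'}(0,T; V^{p'}_{0.div})$, and applying the monotonicity of ${\mathcal A}^{\varepsilon}_{{\bf u}}$ (Lemma 1 in \cite{BDP1} combined with (\ref{m5})) yields $[[{\mathcal A}^{\varepsilon}_{{\bf u}}(\overline{\upsilon}_\varepsilon), w - \overline{\upsilon}_\varepsilon]] \le [[{\mathcal A}^{\varepsilon}_{{\bf u}}(w), w - \overline{\upsilon}_\varepsilon]]$, which transfers the nonlinearity onto the known function $w$. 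Passing to the limit then relies on the integration-by-parts identity $\int_0^t \langle \partial_t \overline{\upsilon}_\varepsilon, \overline{\upsilon}_\varepsilon\rangle \, d\tilde t = \frac{1}{2}\Vert \overline{\upsilon}_\varepsilon(t)\Vert^2_{{\bf L}^2(\Omega)}$ together with the weak lower semi-continuity of the $L^2$-norm at $t=T$ provided by (\ref{conv_epsilon8bis}). The limiting inequality is converted back to the original form by choosing $w = \overline{\upsilon} + \lambda \tilde\vartheta\zeta$ with $\lambda > 0$ and a divergence-free $\tilde\vartheta$, dividing by $\lambda$, letting $\lambda \to 0^+$, and invoking the hemicontinuity of the limit operator together with convexity of $J$ (which controls $\bigl(J(\overline{\upsilon} + \lambda\tilde\vartheta\zeta) - J(\overline{\upsilon})\bigr)/\lambda$ by $J(\overline{\upsilon} + \tilde\vartheta\zeta) - J(\overline{\upsilon})$). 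Extension from $V^{p'}_0$ to $V^p_0$ test functions by the density result proved in Proposition \ref{coro 1.1}, combined with the already-handled pressure term, completes the verification.

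For uniqueness, given two solutions $(\overline{\upsilon}_i, \pi_i)_{i=1,2}$, I will extend the variational inequality by density to divergence-free test functions in $L^p(0,T; V^p_{0.div})$ (eliminating the pressure) and substitute $\overline{\varphi} = (\overline{\upsilon}_{3-i} - \overline{\upsilon}_i){\bf 1}_{[0,t]}$ into the inequality for $\overline{\upsilon}_i$. Summing the two inequalities, the $J$-terms cancel and the combination of the monotonicity of ${\mathcal A}_{{\bf u}}$ with integration by parts gives
\begin{eqnarray*}
\frac{1}{2}\Vert (\overline{\upsilon}_1 - \overline{\upsilon}_2)(t)\Vert^2_{{\bf L}^2(\Omega)} \le 0 \qquad \forall t \in [0,T]
\end{eqnarray*}
whence $\overline{\upsilon}_1 \equiv \overline{\upsilon}_2$. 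Uniqueness of $\pi$ then follows from the gradient-operator properties already invoked in the proof of Proposition \ref{coro 1.1}.
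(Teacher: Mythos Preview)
Your overall strategy---passing to the limit in the vanishing-viscosity approximations and handling the nonlinear term by a monotonicity argument---matches the paper's, and your uniqueness argument is essentially identical to the paper's. However, your Minty-type treatment of the nonlinear term has two gaps.

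First, after passing to the limit in the monotonicity inequality you obtain
\begin{eqnarray*}
\int_0^T \Big\langle \frac{\partial\overline{\upsilon}}{\partial t}, w-\overline{\upsilon}\Big\rangle\,dt + \bigl[\mathcal{A}_{{\bf u}}(w), w-\overline{\upsilon}\bigr] + J(w) - J(\overline{\upsilon}) \ge \int_0^T(\overline{f}, w-\overline{\upsilon})_{{\bf L}^2(\Omega)}\,dt
\end{eqnarray*}
only for $w \in L^{p'}(0,T; V^{p'}_{0.div})$, since this is the space of admissible test functions in (\ref{3.14bis}). Your substitution $w = \overline{\upsilon} + \lambda\tilde\vartheta\zeta$ is then illegitimate because $\overline{\upsilon}$ lies only in $L^p(0,T; V^p_{0.div})$, not in $L^{p'}$. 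This can be repaired by first extending the inequality to all $w \in L^p(0,T; V^p_{0.div})$ via density of $V^{p'}_{0.div}$ in $V^p_{0.div}$ (true, via a Bogovskii correction of the approximants built in Proposition~\ref{coro 1.1}, but not stated anywhere). The paper sidesteps this entirely: rather than substituting $\overline{\upsilon}$, it substitutes $\overline{\upsilon}_{\varepsilon'}$ (which does lie in $L^{p'}$) as test function and takes a second limit $\varepsilon'\to 0$, exploiting the improved regularity $\partial_t\overline{\upsilon} \in L^{p'}\bigl(0,T;(V^p_{0.div})'\bigr)$ from Proposition~\ref{coro 1.1}.

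Second, and more seriously, even once you have the divergence-free inequality you have not verified the full variational inequality of (P${}_{\bf u}$) with the pressure term for arbitrary $\tilde\vartheta \in V^p_0$. Passing to the limit directly in (\ref{3.14}) for non-divergence-free $\tilde\vartheta$ produces an inequality involving the weak limit $\chi$ of $\mathcal{A}_{{\bf u}}(\overline{\upsilon}_\varepsilon)$ in $L^{p'}\bigl(0,T;(V^p_0)'\bigr)$, not $\mathcal{A}_{{\bf u}}(\overline{\upsilon})$; your Minty argument on the divergence-free subspace does not identify $\chi$ on all of $V^p_0$, so ``combining with the already-handled pressure term'' does not yield the desired conclusion. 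This is precisely why the paper works harder: it proves $\limsup_{\varepsilon}\bigl[\mathcal{A}_{{\bf u}}(\overline{\upsilon}_\varepsilon), \overline{\upsilon}_\varepsilon - \overline{\upsilon}\bigr] \le 0$ (via the $\overline{\upsilon}_{\varepsilon'}$ trick above) and then invokes pseudo-monotonicity of $\mathcal{A}_{{\bf u}}$ to conclude $\chi = \mathcal{A}_{{\bf u}}(\overline{\upsilon})$ in $\bigl(L^p(0,T;V^p_0)\bigr)'$. Only after this identification can one pass to the limit in (\ref{3.14}) term by term for general $\tilde\vartheta \in V^{p'}_0$ and recover the inequality with the limit pressure $\pi$.
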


  \begin{proof}
  Let us introduce the operator  $\mathcal{A}_{{\bf u}} : L^{p} \bigl(0,T;V^p_{0} \bigr)\rightarrow   L^{p'} \bigl(0,T; (V^p_{0} )' \bigr) $  defined by
\begin{eqnarray*}
\bigl[\mathcal{A}_{{\bf u}}  (u) ,\overline{\varphi} \bigr]
= \int_0^T \int_{\Omega} \mathcal{F} \bigl( \theta, {\bf u} + \upsilon_0 \xi, D( u + \upsilon_0 \xi) \bigr) : D( \overline{\varphi}) \, dx dt 
\qquad \forall \overline{\varphi}\in L^{p} \bigl(0,T;V^p_{0} \bigr) 
\end{eqnarray*}
where   $[.,.]$ denotes  the duality product
between  $L^{p } \bigl(0,T;V^p_{0} \bigr)$ and  its dual $ \bigl( L^{p} \bigl(0,T;V^p_{0})  \bigr)'$.  

 \bigskip 
 

Now let $\varepsilon >0$ and $\overline{\varphi} = \tilde \vartheta \zeta$ with $\tilde \vartheta \in V^{p'}_{0.div} \subset V^{p}_{0.div}$ and $\zeta \in {\mathcal D}(0,T)$.
With   (\ref{3.14bis}) we have
 \begin{eqnarray*}
\begin{array}{ll}
&\displaystyle{ \int_{0}^{T} \left\langle \frac{\partial \overline{\upsilon}_{\varepsilon}}{\partial t},\overline{\varphi}-\overline{\upsilon}_{\varepsilon} \right\rangle_{(V_{0.div}^{p'})',V_{0.div}^{p'}} \, dt
+ \bigl[\mathcal{A}_{{\bf u}} (\overline{\upsilon}_{\varepsilon}),\overline{\varphi}-\overline{\upsilon}_{\varepsilon} \bigr]}
\\
& 
\displaystyle{+ 2\varepsilon \int_{0}^{T}\int_{\Omega} \bigl|D(\overline{\upsilon}_{\varepsilon} ) \bigr|^{p'-2} D(\overline{\upsilon}_{\varepsilon} ):D(\overline{\varphi}-\overline{\upsilon}_{\varepsilon} ) \, dx dt
+ J(\overline{\varphi})-J(\overline{\upsilon}_{\varepsilon})\geq  \int_0^T ( \overline{f},\overline{\varphi}-\overline{\upsilon}_{\varepsilon})_{{\bf L}^2(\Omega)} }.
\end{array}
\end{eqnarray*}
Hence
\begin{eqnarray} \label{lundi1}
\begin{array}{ll}
& \displaystyle{ \bigl[ \mathcal{A}_{{\bf u}} (\overline{\upsilon}_{\varepsilon}),\overline{\upsilon}_{\varepsilon}-\overline{\upsilon} \bigr]
\leq
\int_{0}^{T} \left\langle \frac{\partial \overline{\upsilon}_{\varepsilon}}{\partial t} ,\overline{\varphi}-\overline{\upsilon}_{\varepsilon} \right\rangle_{(V_{0.div}^{p'})',V_{0.div}^{p'}} \, dt 
+ \bigl[ \mathcal{A}_{{\bf u}} (\overline{\upsilon}_{\varepsilon}) ,\overline{\varphi}-\overline{\upsilon} \bigr] }
\\
&
\displaystyle{
- 2\varepsilon \int_{0}^{T} \int_{\Omega} \bigl| D(\overline{\upsilon}_{\varepsilon} ) \bigr|^{p'} \, dx dt 
+ 2\varepsilon \int_{0}^{T} \int_{\Omega} \bigl| D(\overline{\upsilon}_{\varepsilon} ) \bigr|^{p'-2}
D(\overline{\upsilon}_{\varepsilon} ):D(\overline{\varphi} ) \,dx dt }
\\
& \displaystyle{
+J(\overline{\varphi})-J(\overline{\upsilon}_{\varepsilon})
- \int_0^T ( \overline{f},\overline{\varphi}-\overline{\upsilon}_{\varepsilon} )_{{\bf L}^2(\Omega))} \, dt }
\\
&
\displaystyle{ \leq
- \int_0^T ( \overline{\upsilon}_{\varepsilon}, \tilde \vartheta )_{{\bf L}^2(\Omega)} \zeta' \, dt 
- \frac{1}{2} \Vert \overline{\upsilon}_{\varepsilon} (T) \Vert_{{\bf L}^2(\Omega))}^2
+ \bigl[ \mathcal{A}_{{\bf u}} (\overline{\upsilon}_{\varepsilon}) ,\overline{\varphi}-\overline{\upsilon} \bigr] }
\\
& 
\displaystyle{ 
+ 2 \varepsilon^{1/p'} \Bigl( \varepsilon^{1/p'} \Vert \overline{\upsilon}_{\varepsilon} \Vert_{L^{p'} (0,T; V_{0.div}^{p'})} 
\Bigr)^{p'-1} \Vert D(\overline{\varphi} ) \Vert_{L^{p'} (0,T; (L^{p'}(\Omega)^{3 \times 3} ))} }
\\
& 
\displaystyle{ +J(\tilde \vartheta \zeta )-J(\overline{\upsilon}_{\varepsilon})- \int_0^T (\overline{f},\tilde \vartheta \zeta -\overline{\upsilon}_{\varepsilon} )_{{\bf L}^2(\Omega))} \, dt }.
\end{array}
\end{eqnarray}
With (\ref{conv_epsilon1bis}) we have immediately
\begin{eqnarray*}
\begin{array}{ll}
& \displaystyle 
\int_0^T (\overline{f},\tilde \vartheta \zeta -\overline{\upsilon}_{\varepsilon} )_{{\bf L}^2(\Omega)} \, dt
\longrightarrow  \int_0^T (\overline{f},\tilde \vartheta \zeta -\overline{\upsilon} )_{{\bf L}^2(\Omega)} \, dt ,
\\
& \displaystyle 
\int_0^T ( \overline{\upsilon}_{\varepsilon}, \tilde \vartheta )_{{\bf L}^2(\Omega)} \zeta' \, dt 
\longrightarrow \int_0^T ( \overline{\upsilon}, \tilde \vartheta )_{{\bf L}^2(\Omega)} \zeta' \, dt 
\end{array}
\end{eqnarray*}
and with (\ref{conv_trace}) we get
\begin{eqnarray*}
\begin{array}{ll}
\displaystyle \bigl| J( \overline{\upsilon}_{\varepsilon} ) - J (\overline{\upsilon} ) \bigr| 
\le \int_0^T \int_{\Gamma_0} k \bigl| \overline{\upsilon}_{\varepsilon} - \overline{\upsilon} \bigr| \, d x' dt 
\\
\displaystyle 
\le \Vert k \Vert_{L^{p'} (0,T; L^{p'} (\Gamma_0) )} \Vert \gamma_p (\overline{\upsilon}_{\varepsilon}) - \gamma_p ( \overline{\upsilon} ) \Vert_{L^p(0,T; {\bf L}^p(\partial \Omega))} \longrightarrow 0.
\end{array}
\end{eqnarray*}
Moreover ${\mathcal A}_{{\bf u}} $ is a bounded operator from $L^p \bigl( 0,T; V_{0}^p \bigr)$ to $L^{p'} \bigl( 0,T; (V_{0}^p)' \bigr)$ and $( \overline{\upsilon}_{\varepsilon} )_{\varepsilon >0}$ is bounded in $L^p \bigl( 0,T; V_{0.div}^p \bigr)$. So, by possibly extracting another subsequence still denoted $( \overline{\upsilon}_{\varepsilon} , \pi_{\varepsilon})_{\varepsilon >0}$, there exists $\chi \in L^{p'} \bigl( 0,T; (V_{0}^p)' \bigr)$ such that
\begin{eqnarray} \label{toto1_epsilon}
{\mathcal A}_{{\bf u}}  ( \overline{\upsilon}_{\varepsilon} ) \rightharpoonup \chi \quad \hbox{\rm weakly in $L^{p'} \bigl( 0,T; (V_{0}^p)' \bigr)$.}
\end{eqnarray}
By using  estimate (\ref{es_epsilon1}) we infer from  (\ref{lundi1})  
\begin{eqnarray*}
\begin{array}{ll}
\displaystyle \lim \sup \bigl[ {\mathcal A}_{{\bf u}} (  \overline{\upsilon}_{\varepsilon}) ,  \overline{\upsilon}_{\varepsilon} -  \overline{\upsilon} \bigr] 
\le 
- \int_0^T (  \overline{\upsilon}, \tilde \vartheta )_{{\bf L}^2(\Omega) } \zeta' \, dt 
- \frac{1}{2} \lim \inf \Vert  \overline{\upsilon}_{\varepsilon} (T) \Vert_{{\bf L}^2(\Omega)}^2 
\\
\displaystyle + [ \chi , \tilde \vartheta \zeta  -  \overline{\upsilon} ]
+ J( \tilde \vartheta \zeta) - J( \overline{\upsilon} ) 
-  \int_0^T ( \overline{f}, \tilde \vartheta \zeta - \overline{\upsilon} )_{{\bf L}^2(\Omega)} \, dt 
\end{array}
\end{eqnarray*}
and with (\ref{conv_epsilon8bis}) we get
\begin{eqnarray*}
\begin{array}{ll}
\displaystyle \lim \sup \bigl[ {\mathcal A}_{{\bf u}} (  \overline{\upsilon}_{\varepsilon}) ,  \overline{\upsilon}_{\varepsilon} -  \overline{\upsilon} \bigr] 
\le 
- \int_0^T (  \overline{\upsilon}, \tilde \vartheta )_{{\bf L}^2(\Omega) } \zeta' \, dt 
- \frac{1}{2}  \Vert  \overline{\upsilon} (T) \Vert_{{\bf L}^2(\Omega)}^2 
\\
\displaystyle + [ \chi , \tilde \vartheta \zeta  -  \overline{\upsilon} ]
+ J( \tilde \vartheta \zeta) - J( \overline{\upsilon} ) 
-  \int_0^T ( \overline{f}, \tilde \vartheta \zeta - \overline{\upsilon} )_{{\bf L}^2(\Omega)} \, dt .
\end{array}
\end{eqnarray*}

With Proposition \ref{coro 1.1} we know that $\displaystyle \frac{\partial \overline{\upsilon}}{\partial t} \in L^{p'} \bigl( 0,T; (V_{0.div}^p)' \bigr)$ and $\tilde \vartheta \zeta \in L^{p} \bigl( 0,T; V_{0.div}^{p} \bigr) $. So
\begin{eqnarray*}
- \int_0^T (  \overline{\upsilon}, \tilde \vartheta )_{{\bf L}^2(\Omega) } \zeta' \, dt
= \left\langle \frac{\partial}{\partial t} (\overline{\upsilon}, \tilde \vartheta )_{{\bf L}^2(\Omega) },  \zeta
\right\rangle_{{\mathcal D}'(0,T), {\mathcal D}(0,T)} 
= \int_0^T \left\langle \frac{\partial \overline{\upsilon}}{\partial t} ,  \tilde \vartheta \zeta \right\rangle_{(V_{0.div}^p)', V_{0.div}^p } \, dt 
\end{eqnarray*}
and by density of ${\mathcal D}(0,T) \otimes V_{0.div}^{p'}$ into $L^{p} (0,T; V_{0.div}^{p'})$ we have 
\begin{eqnarray*}
\begin{array}{ll}
\displaystyle \lim \sup \bigl[ {\mathcal A}_{{\bf u}}  (  \overline{\upsilon}_{\varepsilon}) ,  \overline{\upsilon}_{\varepsilon} -  \overline{\upsilon} \bigr] 
\le 
\int_0^T \left\langle \frac{\partial \overline{\upsilon}}{\partial t} ,  \overline{\varphi} \right\rangle_{(V_{0.div}^p)', V_{0.div}^p } \, dt 
 - \frac{1}{2}  \Vert  \overline{\upsilon}  (T) \Vert_{{\bf L}^2(\Omega)}^2 
\\
\displaystyle + [ \chi , \overline{\varphi} -  \overline{\upsilon} ]
+ J( \overline{\varphi}) - J( \overline{\upsilon} ) 
-  \int_0^T  (  \overline{f}, \overline{\varphi} - \overline{\upsilon} )_{{\bf L}^2(\Omega)} \, dt 
\qquad \forall \overline{\varphi} \in L^{p} (0,T; V_{0.div}^{p'}).
\end{array}
\end{eqnarray*}
By choosing $\overline{\varphi} =  \overline{\upsilon}_{\varepsilon'}$ and taking the limit as $\varepsilon '$ tends to zero in the right-hand side of the previous inequality we obtain
\begin{eqnarray} \label{toto2_epsilon} 
\displaystyle \qquad \qquad  \quad \lim \sup \bigl[ {\mathcal A}_{{\bf u}} (  \overline{\upsilon}_{\varepsilon}) ,  \overline{\upsilon}_{\varepsilon} -  \overline{\upsilon} \bigr] 
\le 
\underbrace{ \int_0^T \left\langle \frac{\partial \overline{\upsilon}}{\partial t} , \overline{\upsilon}  \right\rangle_{(V_{0.div}^p)', V_{0.div}^p } }_{= \frac{1}{2} \Vert  \overline{\upsilon} (T) \Vert_{{\bf L}^2(\Omega)}^2 }
 - \frac{1}{2}  \Vert  \overline{\upsilon} (T) \Vert_{{\bf L}^2(\Omega)}^2  = 0 .
 \end{eqnarray}


By recalling that the mapping $\lambda_2 \mapsto {\mathcal F} ( \cdot, \cdot, \lambda_2)$ is monotone in ${\mathbb R}^{3 \times 3}$ for any $p>1$ (see Lemma 1 in \cite{BDP1}) we infer that the operator ${\mathcal A}_{{\bf u}}$ is monotone. Moreover with (\ref{rop}) and (\ref{eq17ante}) we obtain that ${\mathcal A}_{{\bf u}}$ is bounded and hemicontinuous.
Hence  ${\mathcal A}_{{\bf u}}$ is pseudo-monotone. We obtain
\begin{eqnarray*}
\displaystyle \lim \inf \bigl[ {\mathcal A}_{{\bf u}} (  \overline{\upsilon}_{\varepsilon}) ,  \overline{\upsilon}_{\varepsilon} -  \overline{\varphi} \bigr] 
\ge  \bigl[ {\mathcal A}_{{\bf u}} (  \overline{\upsilon}) ,  \overline{\upsilon} -  \overline{\varphi} \bigr]
\qquad \forall \overline{\varphi} \in L^{p} (0,T; V_{0}^{p}) 
\end{eqnarray*}
and with (\ref{toto1_epsilon}) we get 
\begin{eqnarray} \label{toto3_epsilon}
\displaystyle \lim \inf \bigl[ {\mathcal A}_{{\bf u}} (  \overline{\upsilon}_{\varepsilon}) ,  \overline{\upsilon}_{\varepsilon} \bigr] 
- [ \chi,   \overline{\varphi} ] 
\ge  \bigl[ {\mathcal A}_{{\bf u}} (  \overline{\upsilon}) ,  \overline{\upsilon} -  \overline{\varphi} \bigr]
\qquad \forall \overline{\varphi} \in L^{p} (0,T; V_{0}^{p}) .
\end{eqnarray}
By choosing $ \overline{\varphi}  =  \overline{\upsilon} $ we infer that 
\begin{eqnarray*}
\displaystyle \lim \inf \bigl[ {\mathcal A}_{{\bf u}} (  \overline{\upsilon}_{\varepsilon}) ,  \overline{\upsilon}_{\varepsilon} \bigr] 
\ge  [ \chi,   \overline{\upsilon} ] .
\end{eqnarray*}
On the other hand, with (\ref{toto2_epsilon}) we have 
\begin{eqnarray*}
\lim \sup  \bigl[ {\mathcal A}_{{\bf u}} (  \overline{\upsilon}_{\varepsilon}) ,  \overline{\upsilon}_{\varepsilon}  -   \overline{\upsilon} \bigr] 
= \lim \sup   \bigl[ {\mathcal A}_{{\bf u}} (  \overline{\upsilon}_{\varepsilon}) ,  \overline{\upsilon}_{\varepsilon} \bigr]  - [ \chi,    \overline{\upsilon} ] 
\le 0.
\end{eqnarray*}
It follows that 
\begin{eqnarray*}
[ \chi,   \overline{\upsilon} ]  =  \lim \inf \bigl[ {\mathcal A}_{{\bf u}} (  \overline{\upsilon}_{\varepsilon}) ,  \overline{\upsilon}_{\varepsilon} \bigr] 
=  \lim \sup \bigl[ {\mathcal A}_{{\bf u}} (  \overline{\upsilon}_{\varepsilon}) ,  \overline{\upsilon}_{\varepsilon} \bigr] .
\end{eqnarray*}
Going back to (\ref{toto3_epsilon}) we obtain 
\begin{eqnarray*}
[ \chi,   \overline{\upsilon}  - \overline{\varphi} ]  \ge  \bigl[ {\mathcal A}_{{\bf u}} (  \overline{\upsilon}) ,  \overline{\upsilon} -  \overline{\varphi} \bigr]
\qquad \forall \overline{\varphi} \in L^{p} (0,T; V_{0}^{p}) 
\end{eqnarray*}
and with $\overline{\varphi} = \overline{\upsilon} \pm \overline{\psi}$ with $\overline{\psi} \in L^{p} (0,T; V_{0}^{p})$ we may conclude that 
\begin{eqnarray*}
[ \chi -  {\mathcal A}_{{\bf u}} (  \overline{\upsilon}) ,    \overline{\psi} \bigr] =0
\qquad \forall \overline{\psi} \in L^{p} (0,T; V_{0}^{p}) 
\end{eqnarray*}
which yields $\chi = {\mathcal A}_{{\bf u}} (  \overline{\upsilon}) $ in $ L^{p'} \bigl(0,T; (V_{0}^{p})' \bigr)$, i.e.
\begin{eqnarray*}
{\mathcal A}_{{\bf u}} (  \overline{\upsilon}_{\varepsilon} ) \rightharpoonup {\mathcal A}_{{\bf u}} (  \overline{\upsilon}) \quad \hbox{\rm weakly in $ L^{p'} \bigl(0,T; (V_{0}^{p})' \bigr)$.}
\end{eqnarray*}

\bigskip

Now let $\tilde \vartheta  \in V_{0}^{p'}$ and $\zeta \in {\mathcal D}(0,T)$. Owing that $V_{0}^{p'} \subset V_{0}^p$ we obtain
\begin{eqnarray*}
\bigl[ {\mathcal A}_{{\bf u}} (  \overline{\upsilon}_{\varepsilon} ), \tilde \vartheta \zeta \bigr] \longrightarrow \bigl[ {\mathcal A}_{{\bf u}} (  \overline{\upsilon} ), \tilde \vartheta \zeta \bigr].
\end{eqnarray*}
We already know that 
\begin{eqnarray*}
\bigl| J( \overline{\upsilon}_{\varepsilon} ) - J (\overline{\upsilon} ) \bigr| 
\le \Vert k \Vert_{L^{p'} (0,T; L^{p'} (\Gamma_0) )} \Vert \gamma_p (\overline{\upsilon}_{\varepsilon}) - \gamma_p ( \overline{\upsilon} ) \Vert_{L^p(0,T; {\bf L}^p(\partial \Omega))} 
\longrightarrow 0
\end{eqnarray*}
and similarly
\begin{eqnarray*}
\bigl| J( \overline{\upsilon}_{\varepsilon} + \tilde \vartheta \zeta) - J (\overline{\upsilon} + \tilde \vartheta \zeta) \bigr| 
\le \Vert k \Vert_{L^{p'} (0,T; L^{p'} (\Gamma_0) )} \Vert \gamma_p (\overline{\upsilon}_{\varepsilon}) - \gamma_p ( \overline{\upsilon} ) \Vert_{L^p(0,T; {\bf L}^p(\partial \Omega))}.
 \longrightarrow 0.
\end{eqnarray*}
Moreover 
\begin{eqnarray*}
\begin{array}{ll}
& \displaystyle
\left| 2\varepsilon \int_{0}^{T}\int_{\Omega} \bigl|D(\overline{\upsilon}_{\varepsilon} ) \bigr|^{p'-2} D(\overline{\upsilon}_{\varepsilon} ):D(\tilde \vartheta \zeta) \, dx dt \right| 
 \\
 & \displaystyle  
\le 
 2 \varepsilon^{1/p'} \Bigl( \varepsilon^{1/p'} \Vert \overline{\upsilon}_{\varepsilon} \Vert_{L^{p'} (0,T; V_{0.div}^{p'})} 
\Bigr)^{p'-1} \Vert D(\tilde \vartheta \zeta ) \Vert_{L^{p'} (0,T; (L^{p'}(\Omega)^{3 \times 3} ))} 
\longrightarrow 0
  \end{array}
  \end{eqnarray*}
and 
\begin{eqnarray*}
\begin{array}{ll}
& \displaystyle
 \left\langle \frac{\partial}{\partial t} ( \overline{\upsilon}_{\varepsilon}, \tilde \vartheta)_{{\bf L}^2(\Omega)} , \zeta \right\rangle_{{\mathcal D}'(0,T), {\mathcal D}(0,T)} 
 = - \int_0^T ( \overline{\upsilon}_{\varepsilon}, \tilde \vartheta)_{{\bf L}^2(\Omega)}  \zeta' \, dt
 \\
 & \displaystyle
  \longrightarrow - \int_0^T ( \overline{\upsilon}, \tilde \vartheta)_{{\bf L}^2(\Omega)}  \zeta' \, dt 
  =  \left\langle \frac{\partial}{\partial t} ( \overline{\upsilon} , \tilde \vartheta)_{{\bf L}^2(\Omega)} , \zeta \right\rangle_{{\mathcal D}'(0,T), {\mathcal D}(0,T)} .
  \end{array}
  \end{eqnarray*}
Finally with (\ref{conv_pression}) we get
\begin{eqnarray*}
\left\langle \int_{\Omega} \pi_{\varepsilon} div(\tilde \vartheta) , \zeta \right\rangle_{{\mathcal D}'(0,T), {\mathcal D}(0,T)} 
 \longrightarrow \left\langle \int_{\Omega} \pi div(\tilde \vartheta) , \zeta \right\rangle_{{\mathcal D}'(0,T), {\mathcal D}(0,T)} .
 \end{eqnarray*}
 
 Thus we can pass to the limit in (\ref{3.14}) as $\varepsilon$ tends to zero and we get 
 \begin{eqnarray*}\label{3.14.1}
\begin{array}{ll}
\displaystyle{ \Big<\frac{\partial}{\partial t}(\overline{\upsilon}, \tilde\vartheta )_{\bf{L}^2(\Omega)},\zeta\Big>_{\mathcal{D}'(0,T),\mathcal{D}(0,T)}
+ \int_{0}^{T} \int_{\Omega} \mathcal{F} \bigl( \theta, {\bf u} + \upsilon_0 \xi, D( \overline{\upsilon} + \upsilon_0 \xi) \bigr) : D( \tilde\vartheta) \zeta \, dx dt } 
\medskip 
\\
\displaystyle{
-  \Big< \int_{\Omega} \pi \ div(\tilde \vartheta ) \, dx , \zeta\Big>_{\mathcal{D}'(0,T),\mathcal{D}(0,T)} } 
\displaystyle{
+ J( \overline{\upsilon} +\tilde\vartheta \zeta)-J(\overline{\upsilon}) }
\\
\displaystyle{
\geq\,\, \int_{0}^{T} \left( f + \frac{\partial \xi}{\partial t} \upsilon_0, \tilde\vartheta \right)_{{\bf L}^2(\Omega)}\zeta\,dt} 
\qquad \quad  \forall \tilde\vartheta\in V^{p'}_{0},  \  \forall \zeta\in \mathcal{D}(0,T) . 
 \end{array}
 \end{eqnarray*}
Finally, by using the density of $V^{p'}_{0}$ into $V^{p}_{0}$ and reminding that $\pi \in H^{-1} \bigl(0,T; L^{p'}_0(\Omega) \bigr)$, the same inequality holds for any $\tilde \vartheta \in V^p_0$ and $\zeta \in  \mathcal{D}(0,T)$ and we may conclude that $( \overline{\upsilon}, \pi)$ is a solution of problem (P${}_{{\bf u}}$).

\smallskip

Let ${\overline{\varphi}}= \tilde\vartheta \zeta$ with $\tilde\vartheta \in V^{p}_{0.div}$ and $\zeta\in \mathcal{D}(0,T) $. Owing that $\displaystyle \frac{\partial  \overline{\upsilon}}{\partial t} \in L^{p'} \bigl( 0,T; (V_{0.div}^{p})' \bigr)$ 
 we have 
\begin{eqnarray*}\label{3.14bisbis}
\displaystyle 
\underbrace{ \Big<\frac{\partial}{\partial t}(\overline{\upsilon} , \tilde\vartheta )_{\bf{L}^2(\Omega)},\zeta\Big>_{\mathcal{D}'(0,T),\mathcal{D}(0,T)}}_{\displaystyle = \int_0^T \Big< \frac{\partial \overline{\upsilon} }{\partial t}, {\overline{\varphi}}  \Big>_{ (V^{p}_{0.div})', V^{p}_{0.div} }    \, dt }
+  \bigl[\mathcal{A}_{{\bf u}} (\overline{\upsilon} ) , {\overline{\varphi}}  \bigr]
\displaystyle{
+ J( \overline{\upsilon} + {\overline{\varphi}} )-J(\overline{\upsilon})
\geq\,\, \int_{0}^{T} \left( f + \frac{\partial \xi}{\partial t} \upsilon_0, {\overline{\varphi}}  \right)_{{\bf L}^2(\Omega)} \,dt} 
 \end{eqnarray*}
and by density of $\mathcal{D}(0,T) \otimes V^{p}_{0.div}$ into $L^{p} \bigl(0,T; V^{p}_{0.div} \bigr)$ the same inequality is true for any ${\overline{\varphi}} \in L^{p} \bigl(0,T; V^{p}_{0.div} \bigr)$.

\bigskip


Let us prove now that  (P${}_{{\bf u}}$) admits an unique solution. Let us argue by contradiction and let us assume that this problem admits two solutions  $(\overline{\upsilon}_{1}, \pi_1) $ and $(\overline{\upsilon}_{2}, \pi_2) $. So, we have  the following two inequalities
\begin{eqnarray*}
\begin{array}{ll}
\displaystyle
\int_{0}^{T} \left\langle \frac{ \partial \overline{\upsilon}_1}{\partial t}, \overline{\varphi} - \overline{\upsilon}_{1} \right\rangle_{(V_{0.div}^p)', V_{0.div}^p}  \,dt
+ \bigl[ \mathcal{A}_{{\bf u}} (\overline{\upsilon}_{1}) ,\overline{\varphi}-\overline{\upsilon}_{1} \bigr]
+ J(\overline{\varphi}) - J(\overline{\upsilon}_{1})
\\
\displaystyle \geq \int_0^T (\overline{f},\overline{\varphi}-\overline{\upsilon}_{1} )_{{\bf L}^2(\Omega) }  \, dt 
\end{array}
\end{eqnarray*}
\begin{eqnarray*}
\begin{array}{ll}
\displaystyle \int_{0}^{T} \left\langle \frac{ \partial \overline{\upsilon}_2}{\partial t}, \overline{\varphi} - \overline{\upsilon}_{2} \right\rangle_{(V_{0.div}^p)', V_{0.div}^p}  \,dt
+ \bigl[ \mathcal{A}_{{\bf u}} (\overline{\upsilon}_{2}) ,\overline{\varphi}-\overline{\upsilon}_{2} \bigr]
+ J(\overline{\varphi}) - J(\overline{\upsilon}_{2})
\\
\displaystyle 
\geq \int_0^T (\overline{f},\overline{\varphi}-\overline{\upsilon}_{2} )_{{\bf L}^2(\Omega) }  \, dt
\end{array}
\end{eqnarray*}
for all $\overline{\varphi}\in L^{p}(0,T;V^p_{0.div})$. 

Let  $t\in (0,T]$. We may choose   $\overline{\varphi}=\overline{\upsilon}_{2} {\bf 1}_{[0,t]}+\overline{\upsilon}_{1} (1-{\bf 1}_{[0,t]}) $ in the first inequality  and  $\overline{\varphi}=\overline{\upsilon}_{1} {\bf 1}_{[0,t]}+\overline{\upsilon}_{2} (1-{\bf 1}_{[0,t]}) $ in the second one, where  ${\bf 1}_{[0,t]}$  denotes  the indicatrix function of the interval $[0,t]$.
By adding the two inequalities, we get
\begin{eqnarray*}
\frac{1}{2} \Vert  \overline{\upsilon}_{1} (t) - \overline{\upsilon}_{2} (t) \Vert_{{\bf L}^2(\Omega)} 
+ \bigl[ \mathcal{A}_{{\bf u}} (\overline{\upsilon}_{1}) -\mathcal{A}_{{\bf u}} (\overline{\upsilon}_{2}) ,(\overline{\upsilon}_{1}-\overline{\upsilon}_{2}) {\bf 1}_{[0,t]} \bigr] \leq \frac{1}{2} \Vert  \overline{\upsilon}_{1} (0) - \overline{\upsilon}_{1} (0) \Vert_{{\bf L}^2(\Omega)}  = 0.
\end{eqnarray*}
On the other hand
 \begin{eqnarray*}
 \begin{array}{ll}
\displaystyle  \bigl[ \mathcal{A}_{{\bf u}} (\overline{\upsilon}_{1}) -\mathcal{A}_{{\bf u}} (\overline{\upsilon}_{2}) ,(\overline{\upsilon}_{1}-\overline{\upsilon}_{2}) {\bf 1}_{[0,t]} \bigr] 
 \\
 \displaystyle 
 = \int_0^t \int_{\Omega} \bigl( {\mathcal F} \bigl( \theta, {\bf u} + \upsilon_0 \xi, D( \upsilon_1 + \upsilon_0 \xi) \bigr)  -   {\mathcal F} \bigl( \theta, {\bf u} + \upsilon_0 \xi, D( \upsilon_2 + \upsilon_0 \xi) \bigr) \bigr) : D( \upsilon_1 - \upsilon_2) \, dx d \tilde t.
\end{array}
\end{eqnarray*}
With Lemma 1 in \cite{BDP1} we know that the mapping $\lambda_2 \mapsto {\mathcal F} ( \cdot, \cdot, \lambda_2)$ is monotone on $\mathbb{R}^{3 \times 3}$. It follows that  
  \begin{eqnarray*}
 \bigl[ \mathcal{A}_{{\bf u}} (\overline{\upsilon}_{1}) -\mathcal{A}_{{\bf u}} (\overline{\upsilon}_{2}) ,(\overline{\upsilon}_{1}-\overline{\upsilon}_{2}) {\bf 1}_{[0,t]} \bigr]  \ge 0
 \end{eqnarray*}
 and 
 \begin{eqnarray*}
  \Vert  \overline{\upsilon}_{1} (t) - \overline{\upsilon}_{1} (t) \Vert_{{\bf L}^2(\Omega)}  \le  \Vert  \overline{\upsilon}_{1} (0) - \overline{\upsilon}_{1} (0) \Vert_{{\bf L}^2(\Omega)}  = 0 \quad \forall t \in (0,T].
  \end{eqnarray*}
Thus  $ \overline{\upsilon}_{1} = \overline{\upsilon}_{2}$.
  It follows that 
\begin{eqnarray*}
\left\langle \int_{\Omega} (\pi_1 - \pi_2 ) \, div(\tilde \vartheta) \, dx , \zeta  \right\rangle_{{\mathcal D}'(0,T), {\mathcal D}(0,T)} =0 \quad \forall \tilde \vartheta \in {\bf W}^{1,p}_0 (\Omega), \ \forall \zeta \in {\mathcal D}(0,T).
\end{eqnarray*}
By using the same notations as in Proposition \ref{apriori-estimates2} we obtain that 
\begin{eqnarray*}
\begin{array}{ll}
\displaystyle \left\langle \int_{\Omega} (\pi_1 - \pi_2 ) \, \varpi^* \, dx , \zeta  \right\rangle_{{\mathcal D}'(0,T), {\mathcal D}(0,T)} 
= \left\langle \int_{\Omega} (\pi_1 - \pi_2 ) \, \varpi \, dx , \zeta  \right\rangle_{{\mathcal D}'(0,T), {\mathcal D}(0,T)} 
\\
\displaystyle 
= \left\langle \int_{\Omega} (\pi_1 - \pi_2 ) \, div\bigl( P_p(\varpi) \bigr)  \, dx , \zeta  \right\rangle_{{\mathcal D}'(0,T), {\mathcal D}(0,T)} 
=0 \quad \forall \varpi^* \in L^p (\Omega), \ \forall \zeta \in {\mathcal D}(0,T)
\end{array}
\end{eqnarray*}
  which yields $\pi_1 = \pi_2$ and concludes the proof.
\end{proof}

\bigskip


\section{Existence result for problem (P)} \label{fixed_point}

\bigskip

As already explained at the end of Section \ref{description} we will prove now the existence of a solution to problem (P) by using a fixed point argument. More precisely we consider the mapping $\Lambda: L^p \bigl(0,T; {\bf L}^p (\Omega) \bigr) \to L^p \bigl(0,T; {\bf L}^p (\Omega) \bigr)$   defined by $\Lambda ({\bf u}) = \overline{\upsilon}$ where  $(\overline{\upsilon}, \pi)$ is the unique solution of problem (P${}_{\bf u}$) for all ${\bf u} \in L^p \bigl(0,T; {\bf L}^p (\Omega) \bigr)$ i.e. $\Lambda ({\bf u}) = \overline{\upsilon}$ satisfies
\begin{eqnarray}\label{3.14.1bis}
\begin{array}{ll}
\displaystyle 
 \int_0^T \Big< \frac{\partial \overline{\upsilon} }{\partial t}, {\overline{\varphi}}  \Big>_{ (V^{p}_{0.div})', V^{p}_{0.div} }    \, dt 
+  \bigl[\mathcal{A}_{{\bf u}} (\overline{\upsilon} ) , {\overline{\varphi}}  \bigr]
\medskip\\
\displaystyle{
+ J( \overline{\upsilon} + {\overline{\varphi}} )-J(\overline{\upsilon})
\geq\,\, \int_{0}^{T} \left( f + \frac{\partial \xi}{\partial t} \upsilon_0, {\overline{\varphi}}  \right)_{{\bf L}^2(\Omega)} \,dt} 
 \quad \forall {\overline{\varphi}} \in L^{p} \bigl(0,T; V^{p}_{0.div} \bigr).
 \end{array}
  \end{eqnarray}
By using Schauder's fixed point theorem we will prove that $\Lambda$ admits a fixed point and we will use   De Rham's theorem to construct  the pressure term.

\begin{theorem} \label{main_theo}
  Let   $f\in  L^ {p'}\bigl(0,T;{\bf L}^2(\Omega) \bigr)$, $k \in L^{p'}\bigl(0,T;L_{+}^{p'}(\Gamma_0)\bigr)$, $\mu$ satisfying (\ref{rop})-(\ref{mlo}), 
  $\theta \in  L^{\tilde q} \bigl(0,T;L^{\tilde p}(\Omega) \bigr)$ with $\tilde q \ge 1$ and $\tilde p \ge 1$,
$s\in L^{ p} \bigl(0,T;{\bf L}^p(\Gamma_0) \bigr)$, $\xi\in W^{1,p'}(0,T)$ satisfying (\ref{xi})
and  $\upsilon_{0}\in \textbf{W}^{1,p}(\Omega)$ satisfying  (\ref{coco}). 
Then problem (P) admits a solution i.e. there exist $\overline{\upsilon}\in C \bigl([0,T];{\bf L}^2(\Omega) \bigr)\cap L^{ p} \bigl(0,T;V^p_{0.div} \bigr)$  with  $\displaystyle \frac{\partial \overline{\upsilon}}{\partial t} \in L^{p'} \bigl(0,T; (V^p_{0.div})' \bigr)$  and  $\pi \in H^{-1}\bigl(0,T;L_{0}^{p'}(\Omega) \bigr)$ such that 
\begin{eqnarray*}\label{3.14.2}
\begin{array}{ll}
\displaystyle{ \Big<\frac{\partial}{\partial t}(\overline{\upsilon} , \tilde\vartheta )_{\bf{L}^2(\Omega)},\zeta\Big>_{\mathcal{D}'(0,T),\mathcal{D}(0,T)}
+ \int_{0}^{T} \int_{\Omega} \mathcal{F} \bigl( \theta, \overline{\upsilon} + \upsilon_0 \xi, D( \overline{\upsilon} + \upsilon_0 \xi) \bigr) : D( \tilde\vartheta) \zeta \, dx dt}
\medskip\\
\displaystyle{
-  \Big< \int_{\Omega} \pi \ div(\tilde \vartheta ), \zeta\Big>_{\mathcal{D}'(0,T),\mathcal{D}(0,T)} 
+ J( \overline{\upsilon}+\tilde\vartheta \zeta)-J(\overline{\upsilon}) } 
\medskip\\
\displaystyle{

\geq\,\, \int_{0}^{T} \left( f + \frac{\partial \xi}{\partial t} \upsilon_0, \tilde\vartheta \right)_{\bf{L}^2(\Omega)}\zeta\,dt} 
\qquad  \quad  \forall \tilde\vartheta\in V^p_{0},  \  \forall \zeta\in \mathcal{D}(0,T)  
 \end{array}
 \end{eqnarray*}
and the initial condition
 \begin{eqnarray*}\label{3.15}
\overline{\upsilon}(0)=\upsilon_{0}-\upsilon_{0}\xi(0)=0\qquad  \mbox{\rm in}\  \Omega.
\end{eqnarray*}
 \end{theorem}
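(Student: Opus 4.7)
The plan is to apply Schauder's fixed point theorem to the mapping
$\Lambda : L^p \bigl( 0,T; {\bf L}^p(\Omega) \bigr) \to L^p \bigl( 0,T; {\bf L}^p(\Omega) \bigr)$
introduced just before the statement, and then to recover the pressure from Theorem \ref{existence_Pu} applied at the fixed point. I would exhibit a closed convex bounded subset $K$ of $L^p \bigl( 0,T; {\bf L}^p(\Omega) \bigr)$ with $\Lambda(K) \subset K$, verify continuity of $\Lambda_{|K}$, and prove relative compactness of $\Lambda(K)$. Once a fixed point ${\bf u}^{\ast} = \Lambda({\bf u}^{\ast}) = \overline{\upsilon}^{\ast}$ is obtained, Theorem \ref{existence_Pu} supplies $\pi \in H^{-1}(0,T; L^{p'}_0(\Omega))$ and the inequality of (P${}_{{\bf u}^{\ast}}$) becomes that of (P), since $\mathcal{F}(\theta, {\bf u}^{\ast} + \upsilon_0 \xi, \cdot) = \mathcal{F}(\theta, \overline{\upsilon}^{\ast} + \upsilon_0 \xi, \cdot)$; the De Rham step is already built into Proposition \ref{coro 1.1}.

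For invariance and compactness I would use the a priori estimates of Propositions \ref{apriori-estimates1} and \ref{apriori-estimates2} together with the subsequent remarks, which provide a constant $C>0$ \emph{independent of ${\bf u}$} with $\| \Lambda({\bf u}) \|_{L^p(0,T; V^p_{0.div})} \le C$ and $\| \partial_t \Lambda({\bf u}) \|_{L^{p'}(0,T; (V^p_{0.div})')} \le C$. The continuous Sobolev embedding $V^p_{0.div} \hookrightarrow {\bf L}^p(\Omega)$ yields a uniform bound $\| \Lambda({\bf u}) \|_{L^p(0,T; {\bf L}^p(\Omega))} \le R$, so one may take $K$ to be the closed ball of radius $R$. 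Since $V^p_{0.div}$ embeds compactly into ${\bf L}^p(\Omega)$ by Rellich--Kondrachov and continuously into $(V^p_{0.div})'$, Aubin--Lions gives relative compactness of $\Lambda(K)$ in $L^p \bigl(0,T; {\bf L}^p(\Omega) \bigr)$.

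The main obstacle is the continuity of $\Lambda$. Given ${\bf u}_n \to {\bf u}$ strongly in $L^p(0,T; {\bf L}^p(\Omega))$ (and thus a.e.\ up to extraction), set $\overline{\upsilon}_n = \Lambda({\bf u}_n)$. The uniform estimates combined with Aubin--Lions, Simon's lemma and Lemma \ref{trace} allow the extraction of a subsequence with $\overline{\upsilon}_n \rightharpoonup \overline{\upsilon}^{\ast}$ weakly in $L^p(0,T; V^p_{0.div})$ and weakly-$\ast$ in $L^{\infty}(0,T; {\bf L}^2(\Omega))$, $\partial_t \overline{\upsilon}_n \rightharpoonup \partial_t \overline{\upsilon}^{\ast}$ weakly in $L^{p'}(0,T; (V^p_{0.div})')$, strongly in $L^p(0,T; {\bf L}^p(\Omega))$ and in $L^p(0,T; {\bf L}^p(\partial \Omega))$, and with $\overline{\upsilon}_n(T) \rightharpoonup \overline{\upsilon}^{\ast}(T)$ weakly in ${\bf L}^2(\Omega)$. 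The delicate point is to identify the weak limit of $\mathcal{A}_{{\bf u}_n}(\overline{\upsilon}_n)$: the dependence of $\mathcal{F}$ on its second slot ${\bf u}_n + \upsilon_0 \xi$ is non-monotone. I would reproduce the Minty--Browder device from the proof of Theorem \ref{existence_Pu}: testing the inequality for $\overline{\upsilon}_n$ with $\overline{\varphi} = \overline{\upsilon}^{\ast}$ and integrating in time yields $\limsup \bigl[ \mathcal{A}_{{\bf u}_n}(\overline{\upsilon}_n), \overline{\upsilon}_n - \overline{\upsilon}^{\ast} \bigr] \le 0$ as in (\ref{toto2_epsilon}); the a.e.\ convergence of ${\bf u}_n + \upsilon_0 \xi$ together with the Carath\'eodory continuity (\ref{rop}) of $\mu$, the growth bound (\ref{F1}) and Vitali's convergence theorem give $\mathcal{F}(\theta, {\bf u}_n + \upsilon_0 \xi, D(w + \upsilon_0 \xi)) \to \mathcal{F}(\theta, {\bf u} + \upsilon_0 \xi, D(w + \upsilon_0 \xi))$ strongly in $L^{p'}\bigl(0,T; (L^{p'}(\Omega))^{3\times 3}\bigr)$ for every fixed $w \in L^p(0,T; V_{0.div}^p)$; combined with the monotonicity of $\lambda_2 \mapsto \mathcal{F}(\cdot,\cdot,\lambda_2)$ (Lemma~1 of \cite{BDP1}) and the pseudo-monotonicity argument leading to (\ref{toto3_epsilon}), this identifies the weak limit as $\mathcal{A}_{{\bf u}}(\overline{\upsilon}^{\ast})$. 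Passing to the limit in the remaining terms of (\ref{3.14.1bis}) is then routine thanks to the weak convergence of $\partial_t \overline{\upsilon}_n$ for the evolution term and the strong trace convergence for $J$. Thus $\overline{\upsilon}^{\ast}$ solves (P${}_{\bf u}$), and by uniqueness (Theorem \ref{existence_Pu}) $\overline{\upsilon}^{\ast} = \Lambda({\bf u})$; since the limit is independent of the extracted subsequence, the whole sequence $\Lambda({\bf u}_n)$ converges to $\Lambda({\bf u})$ in $L^p(0,T; {\bf L}^p(\Omega))$. Schauder's theorem then yields the fixed point and concludes.
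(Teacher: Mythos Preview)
Your proposal is correct and the overall Schauder strategy matches the paper's, but you diverge from the paper in two notable ways.

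\textbf{Continuity of $\Lambda$.} The paper does \emph{not} go through a Minty--Browder identification plus uniqueness. Instead it tests the inequality for $\overline{\upsilon}_n = \Lambda({\bf u}_n)$ with $\overline{\varphi} = \overline{\upsilon} = \Lambda({\bf u})$ and the inequality for $\overline{\upsilon}$ with $\overline{\varphi} = \overline{\upsilon}_n$, adds, and splits $\mathcal{F} = \overline{\mathcal{F}}_1 + \overline{\mathcal{F}}_2$ with $\overline{\mathcal{F}}_1(\lambda_2) = \mu_0 |\lambda_2|^{p-2}\lambda_2$. A pointwise quantitative inequality for the $p$-Laplacian (see (\ref{inegp})) then yields
\[
\|D(\overline{\upsilon}_n - \overline{\upsilon})\|_{L^p}^2 \le C\,\|\overline{\mathcal F}_2(\theta,{\bf u}+\upsilon_0\xi,D(\overline{\upsilon}+\upsilon_0\xi)) - \overline{\mathcal F}_2(\theta,{\bf u}_n+\upsilon_0\xi,D(\overline{\upsilon}+\upsilon_0\xi))\|_{L^{p'}}^2,
\]
and the right-hand side tends to zero by dominated convergence. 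This is more direct and actually delivers \emph{strong} convergence of $\overline{\upsilon}_n$ in $L^p(0,T;V^p_{0.div})$. Your compactness-plus-pseudo-monotonicity route is softer: it avoids the algebraic identity (\ref{inegp}) and uses only the plain monotonicity of $\lambda_2\mapsto\mathcal F(\cdot,\cdot,\lambda_2)$, at the price of recycling the whole Minty machinery already used in Theorem~\ref{existence_Pu}. Both are valid; just be aware that you must check $\overline{\upsilon}^{\ast}(0)=0$ (via Simon's lemma, as you list) before invoking uniqueness.

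\textbf{Pressure.} Here your shortcut is genuinely more economical than the paper. The paper reconstructs $\pi$ from scratch via De Rham's theorem after obtaining the fixed point. Your observation that, at the fixed point ${\bf u}^{\ast}=\overline{\upsilon}^{\ast}$, the pair $(\overline{\upsilon}^{\ast},\pi)$ produced by Theorem~\ref{existence_Pu} already solves (P${}_{{\bf u}^{\ast}}$)${}={}$(P) is correct and avoids a page of computation. One minor inaccuracy: the pressure is not ``built into'' Proposition~\ref{coro 1.1}; it comes from the weak limit (\ref{conv_pression}) of $\pi_\varepsilon$, with Proposition~\ref{coro 1.1} only upgrading its regularity to $H^{-1}(0,T;L^{p'}_0(\Omega))$. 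Similarly, the uniform bound on $\partial_t\Lambda({\bf u})$ in $L^{p'}(0,T;(V^p_{0.div})')$ that you need for Aubin--Lions is established in Proposition~\ref{coro 1.1}, not in Proposition~\ref{apriori-estimates2} (which gives $L^p(0,T;(V^{p'}_{0.div})')$); in the paper these bounds are simply re-derived directly as (\ref{esom1})--(\ref{esom2}).
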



\begin{proof}
As a first step we prove that $\Lambda$ satisfies the assumptions of Schauder's fixed point theorem.

\bigskip

Let    ${\bf u}\in L^p\bigl(0,T; {\bf L}^p(\Omega) \bigr)$ and $\Lambda ({\bf u}) = \overline{\upsilon}$. With $\overline{\varphi}= - \overline{\upsilon}$ in  (\ref{3.14.1bis}) we obtain
\begin{eqnarray*}\label{bnb}
\underbrace{ \int_0^T \Big< \frac{\partial \overline{\upsilon} }{\partial t}, {\overline{\upsilon}}  \Big>_{ (V^{p}_{0.div})', V^{p}_{0.div} }    \, dt }_{=  \frac{1}{2} \Vert\overline{\upsilon}(T)\Vert_{{\bf L}^2(\Omega)}^2} 
 +\bigl[ \mathcal{A}_{\bf{u}}(\overline{\upsilon}),\overline{\upsilon} \bigr]
+ J(\overline{\upsilon}) \leq \int_{0}^{T} (\overline{f} , {\overline{\upsilon}}  )_{{\bf L}^2(\Omega)} \,dt
+J(0).
\end{eqnarray*}
Hence
\begin{eqnarray*}\label{bnb}
\bigl[ \mathcal{A}_{\bf{u}}(\overline{\upsilon}),\overline{\upsilon} \bigr]  \le  \frac{1}{2} \Vert\overline{\upsilon}(T)\Vert_{{\bf L}^2(\Omega)}^2 +  \bigl[ \mathcal{A}_{\bf{u}}(\overline{\upsilon}),\overline{\upsilon} \bigr] + J(\overline{\upsilon})
\leq  \int_{0}^{T} (\overline{f} , {\overline{\upsilon}}  )_{{\bf L}^2(\Omega)} \,dt  + J(0).
\end{eqnarray*}
With the same kind of computations as in Proposition \ref{apriori-estimates1} we get 
\begin{eqnarray*}
\begin{array}{ll}
\displaystyle  2(C_{Korn, p})^p\mu_{0} 
\left| 1 - \frac{ \Vert \upsilon_{0}\xi \Vert_{L^p(0,T;V_{\Gamma_{1}}^p)} }{\Vert \overline{\upsilon} \Vert_{L^p(0,T;V_{0.div}^p)} } \right|^p  
\le   \widetilde C \Vert \overline{f} \Vert_{L^{p'} (0,T; {\bf L}^2(\Omega))} \Vert \overline{\upsilon} \Vert_{L^p(0,T;V_{0.div}^p)}^{1-p} 
\\ 
\displaystyle 
+  \frac{J(0)}{\Vert \overline{\upsilon}\Vert_{L^p(0,T;V_{0.div}^p)}^p}
+ 2 \mu_{1}  \left( 1 + \frac{ \Vert \upsilon_{0}\xi \Vert_{L^p(0,T;V_{\Gamma_{1}}^p) } }{\Vert \overline{\upsilon} \Vert_{L^p(0,T;V_{0.div}^p)}}  \right)^{p-1}  
\frac{ \Vert \upsilon_{0}\xi  \Vert_{L^{p} (0,T;V_{\Gamma_{1}}^p)}  }{\Vert \overline{\upsilon} \Vert_{L^p(0,T;V_{0.div}^p)}} 
\end{array}
\end{eqnarray*}
if $ \Vert \overline{\upsilon} \Vert_{L^p(0,T;V_{0.div}^p)} \not=0$, where we recall that $\widetilde C$ denotes the norm of the continuous injection of $V_0^p$ into ${\bf L}^2 (\Omega)$.
Since the mapping 
\begin{eqnarray*}
\begin{array}{ll}
\displaystyle z \mapsto  2(C_{Korn, p})^p\mu_{0} \left| 1 - \frac{ \Vert \upsilon_{0}\xi \Vert_{L^p(0,T;V_{\Gamma_{1}}^p) } }{z} \right|^p  
\displaystyle - \widetilde C  \frac{ \Vert \overline{f} \Vert_{L^{p'}(0,T;{\bf L}^2(\Omega))} }{z^{p-1}} - \frac{J(0)}{z^p} 
\\
\displaystyle - 2 \mu_{1}  \left( 1 + \frac{ \Vert\upsilon_{0}\xi \Vert_{L^p(0,T;V_{\Gamma_{1}}^p) } }{z}  \right)^{p-1}  
\frac{\Vert \upsilon_{0}\xi  \Vert_{L^{p} (0,T;V_{\Gamma_{1}}^p)}  }{z} 
\end{array}
\end{eqnarray*}
admits $2(C_{Korn, p})^p\mu_{0} >0$ as limit when $z$ tends to $+ \infty$, there exists a real number $C>0$, independent of ${\bf u}$, such that 
\begin{eqnarray} \label{esom1}
 \Vert \Lambda ({\bf u})  \Vert_{L^p(0,T;V_{0.div}^p)}=  \Vert \overline{\upsilon} \Vert_{L^p(0,T;V_{0.div}^p)}\leq C  \qquad \forall {\bf u} \in L^p\bigl(0,T; {\bf L}^p(\Omega) \bigr).
\end{eqnarray}
Hence
\begin{eqnarray*} 
\Vert  \Lambda ({\bf u}) \Vert_{L^p(0,T; {\bf L}^p(\Omega))} \leq C_p  C \qquad \forall {\bf u}\in L^p\bigl(0,T; {\bf L}^p(\Omega) \bigr)
\end{eqnarray*}
where $C_p$ is the norm of the identity mapping from $ L^{p} \bigl(0,T;V_{0.div}^p \bigr)$ into $L^p\bigl(0,T; {\bf L}^p(\Omega) \bigr)
$. 


Let us consider now ${\overline \varphi} = \pm \tilde \vartheta \zeta$ with $ \tilde \vartheta \in V^p_{0.div}$ and  $\zeta \in {\mathcal D} (0,T)$.  With 
  the same computations as in Proposition \ref{apriori-estimates2} we obtain 
\begin{eqnarray*}
\begin{array}{ll}
\displaystyle{\left| \int_0^T \left\langle \frac{\partial {\overline \upsilon}}{\partial t} , \tilde \vartheta \zeta \right\rangle_{(V_{0.div}^p)', V_{0.div}^p}  \, dt \right|}
\\
\displaystyle \leq 
\Bigl(
\| \gamma_p\|_{ {\mathcal L} ( {\bf W}^{1,p}(\Omega) , {\bf L}^p( \partial \Omega) )} 
 \Vert k\Vert_{L^{p'}(0,T;L^{p'}(\Gamma_0))}
 + \widetilde C 
 \Vert\overline{f}\Vert_{L^{p'}(0,T;{\bf L}^2(\Omega)) }
  \Bigr) \Vert\tilde \vartheta \zeta\Vert_{ L^p(0,T;V_{0.div}^p)}
  +\bigl| \bigl[ \mathcal{A}_{\bf u}(\overline{\upsilon}), \tilde \vartheta \zeta \bigr] \bigr|  
\end{array}
\end{eqnarray*}
where $\gamma_p$ is  the trace operator from$ {\bf W}^{1,p} (\Omega) $ into $ {\bf L}^p (\partial \Omega)$.
But with (\ref{mlo}) we have 
\begin{eqnarray*}
\begin{array}{ll}
\displaystyle \bigl| \bigl[ \mathcal{A}_{\bf u}(\overline{\upsilon}), \tilde \vartheta \zeta \bigr] \bigr|  
\le 2 \mu_1 \Vert D( \overline{\upsilon} + \upsilon_0 \xi) \Vert_{L^{p} (0,T; (L^p(\Omega))^{3 \times 3}) }^{p-1} 
\Vert D( \tilde \vartheta \zeta ) \Vert_{L^{p} (0,T; (L^p(\Omega))^{3 \times 3} ) }
\\
\displaystyle \le  2 \mu_1 \bigl( \Vert  {\overline \upsilon}  \Vert_{L^{p} (0,T;V_{0.div}^p )}   +  \Vert  \upsilon_0 \xi \Vert_{L^{p} (0,T;V_{\Gamma_1}^p) } \bigr)^{p-1}
\Vert  \tilde \vartheta \zeta \Vert_{L^{p} (0,T;V_{0.div}^p ) } 
\\
\displaystyle
 \le 2 \mu_1 \bigl( C +  \Vert  \upsilon_0 \xi \Vert_{L^{p} (0,T;V_{\Gamma_1}^p) } \bigr)^{p-1} 
 \Vert  \tilde \vartheta \zeta \Vert_{L^{p} (0,T;V_{0.div}^p ) } .
\end{array}
\end{eqnarray*}
It follows that 
\begin{eqnarray} \label{esom2}
\begin{array}{ll}
\displaystyle 
\left\| \frac{\partial {\overline \upsilon}}{\partial t}  \right\|_{L^{p'}  (0,T; (V_{0.div}^p)')} 
 \le  \| \gamma_p\|_{ {\mathcal L} ( {\bf W}^{1,p}(\Omega) , {\bf L}^p( \partial \Omega) )} 
 \Vert k\Vert_{L^{p'}(0,T;L^{p'}(\Gamma_0))}
\\
\displaystyle
 + \widetilde C 
 \Vert\overline{f}\Vert_{L^{p'}(0,T;{\bf L}^2(\Omega)) }
 +  2 \mu_1 \bigl( C +  \Vert  \upsilon_0 \xi \Vert_{L^{p} (0,T;V_{\Gamma_1}^p) } \bigr)^{p-1} 
\end{array}
\end{eqnarray}
for all ${\bf u}\in L^p\bigl(0,T; {\bf L}^p(\Omega) \bigr)$.
   
 \bigskip
 
By using Aubin's lemma we infer that the closed ball ${\mathcal B} = B_{L^p (0,T; {\bf L}^p(\Omega) )} (0, C_p C) $ satisfies $\Lambda({\mathcal B} ) \subset {\mathcal B}$ and that $\Lambda({\mathcal B} )$ is relatively compact in $L^p\bigl(0,T; {\bf L}^p(\Omega) \bigr)
$.

\bigskip


It remains to prove that the mapping   ${\Lambda}$ is continuous. Indeed,
let  $({\bf u}_{n})_{n\geq0}$ be a sequence of $ L^p \bigl(0,T;{\bf L}^p(\Omega)\bigr)$ which converges strongly to  $\textbf{u}$. Let us prove that 
\begin{eqnarray*}
{\Lambda}(\textbf{u}_{n}) = \overline{\upsilon}_{n} \rightarrow \overline{\upsilon} = {\Lambda}(\textbf{u}) \quad \mbox{\rm strongly in $L^p\bigl(0,T;{\bf L}^p(\Omega) \bigr)$.}
\end{eqnarray*}
 The sequence $(\overline{\upsilon}_{n})_{n\geq0}$  satisfies  (\ref{esom1}) and (\ref{esom2}) so it  admits  strongly converging subsequences in $ L^p \bigl(0,T;{\bf L}^p(\Omega) \bigr)$. We consider such a subsequence, still denoted   $(\overline{\upsilon}_{n})_{n\geq0}$.   
 For all ${\overline{\varphi}} \in L^{p} \bigl(0,T; V^{p}_{0.div} \bigr)$ we have 
  \begin{eqnarray*}
\displaystyle 
 \int_0^T \Big< \frac{\partial \overline{\upsilon}_n }{\partial t}, {\overline{\varphi}} - \overline{\upsilon}_n  \Big>_{ (V^{p}_{0.div})', V^{p}_{0.div} }    \, dt 
+  \bigl[\mathcal{A}_{{\bf u_n}} (\overline{\upsilon}_n ) , {\overline{\varphi}} - \overline{\upsilon}_n  \bigr]
\displaystyle{
+ J(  {\overline{\varphi}} )-J(\overline{\upsilon}_n)
\geq\,\, \int_{0}^{T} ( \overline{f} , {\overline{\varphi}} - \overline{\upsilon}_n )_{{\bf L}^2(\Omega)} \,dt} 
 \end{eqnarray*}
 and 
  \begin{eqnarray*}
\displaystyle 
 \int_0^T \Big< \frac{\partial \overline{\upsilon} }{\partial t}, {\overline{\varphi}}  - \overline{\upsilon}  \Big>_{ (V^{p}_{0.div})', V^{p}_{0.div} }    \, dt 
+  \bigl[\mathcal{A}_{{\bf u}} (\overline{\upsilon} ) , {\overline{\varphi}} - \overline{\upsilon}  \bigr]
\displaystyle{
+ J(  {\overline{\varphi}} )-J(\overline{\upsilon})
\geq\,\, \int_{0}^{T} ( \overline{f} , {\overline{\varphi}} -\overline{\upsilon}  )_{{\bf L}^2(\Omega)} \,dt} .
 \end{eqnarray*}
 Let us choose    $\overline{\varphi}=\overline{\upsilon} $ as test-function in the first inequality  and   $\overline{\varphi}=\overline{\upsilon}_n $ as test-function in second one.   
By adding the two inequalities we obtain 
 \begin{eqnarray*}\label{3.5.86}
\underbrace{ \int_0^T \Big< \frac{\partial \overline{\upsilon}_n }{\partial t} -  \frac{\partial \overline{\upsilon} }{\partial t} ,  \overline{\upsilon}_n  - {\overline{\upsilon}}  \Big>_{ (V^{p}_{0.div})', V^{p}_{0.div} }    \, dt  }_{= \frac{1}{2}  \Vert\overline{\upsilon}_{n}(T)-\overline{\upsilon}(T)\Vert_{{\bf L}^2(\Omega)}^2 }
+ \bigl[\mathcal{A}_{{\bf u}_{n}} (\overline{\upsilon}_{n})-\mathcal{A}_{{\bf u}} (\overline{\upsilon}) , \overline{\upsilon}_{n}-\overline{\upsilon}  \bigr]\leq 0.
\end{eqnarray*}
Thus 
\begin{eqnarray*}\label{3.5.89}
\displaystyle{\frac{1}{2}  \Vert\overline{\upsilon}_{n}(T)-\overline{\upsilon}(T)\Vert_{\mathbf{L}^2(\Omega)}^2
+ \bigl[\mathcal{A}_{{\bf u}_{n}} (\overline{\upsilon}_{n})-\mathcal{A}_{{\bf u}_{n}} (\overline{\upsilon}) , \overline{\upsilon}_{n}-\overline{\upsilon}  \bigr]}
\displaystyle \leq \bigl[\mathcal{A}_{{\bf u} } (\overline{\upsilon}) -\mathcal{A}_{{\bf u}_{n}} (\overline{\upsilon} ) , \overline{\upsilon}_{n}-\overline{\upsilon} \bigr].
\end{eqnarray*}
The second term of the left hand side may be rewritten as 
\begin{eqnarray*}
\begin{array}{ll}
\displaystyle 
\bigl[ \mathcal{A}_{{\bf u}_{n}} (\overline{\upsilon}_{n}) -\mathcal{A}_{{\bf u}_{n}} (\overline{\upsilon}) , \overline{\upsilon}_{n}-\overline{\upsilon}  \bigr]
\\
= \displaystyle{ \int_{0}^{T} \int_{\Omega}  \Big( {\mathcal F} \bigl( \theta,{\bf u}_{n} + \upsilon_0 \xi , D( \overline{\upsilon}_{n} + \upsilon_0 \xi) \bigr) 
 - {\mathcal F} \bigl( \theta,{\bf u}_{n} + \upsilon_0 \xi , D( \overline{\upsilon} + \upsilon_0 \xi) \bigr)  \Big)
 : D( \overline{\upsilon}_{n}-\overline{\upsilon}  ) \, dx dt} 
 \\ 
 = \displaystyle{ \int_{0}^{T}  \int_{\Omega}  \Big( {\overline{\mathcal{F}}_1 } \bigl( D( \overline{\upsilon}_{n} + \upsilon_0 \xi) \bigr)
  - {\overline{\mathcal{F}}_1 } \bigl( D( \overline{\upsilon} + \upsilon_0 \xi) \bigr) \Big)
 : D( \overline{\upsilon}_{n}-\overline{\upsilon}  ) \, dx dt }
 \\
 \displaystyle{ 
 +  \int_{0}^{T}  \int_{\Omega}  \Big( {\overline{\mathcal{F}}_2 } \bigl( \theta,{\bf u}_{n} + \upsilon_0 \xi , D( \overline{\upsilon}_{n} + \upsilon_0 \xi) \bigr)
 - {\overline{\mathcal{F}}_2 } \bigl( \theta,{\bf u}_{n} + \upsilon_0 \xi , D( \overline{\upsilon} + \upsilon_0 \xi) \bigr) \Big)
 : D( \overline{\upsilon}_{n}-\overline{\upsilon}  ) \, dx dt} 
 \end{array}
\end{eqnarray*}
where 
\begin{eqnarray*}
\displaystyle 
{\overline{\mathcal{F}}_1 } (\lambda_{2})
=  \mu_0 |\lambda_{2}|^{p-2}\lambda_{2}
 \quad \hbox{\rm if }  \lambda_2 \not = 0_{\mathbb{R}^{3\times3}}, 
\quad {\overline{\mathcal{F}}_1 } (\lambda_{2})= 0_{\mathbb{R}^{3\times3}}
 \quad \hbox{\rm otherwise}
\end{eqnarray*}
\begin{eqnarray*}
\left\{\begin{array}{ll}
\displaystyle 
{\overline{\mathcal{F}}_2 } (\lambda_{0}, \lambda_{1},\lambda_{2})
= 2 {\overline \mu} \bigl(\lambda_{0} ,\lambda_{1},|\lambda_{2}| \bigr)|\lambda_{2}|^{p-2}\lambda_{2}
 \quad \hbox{\rm if }  \lambda_2 \not = 0_{\mathbb{R}^{3\times3}}, 
\quad
\\
\displaystyle  {\overline{\mathcal{F}}_2 } (\lambda_{0}, \lambda_{1},\lambda_{2})= 0_{\mathbb{R}^{3\times3}}
 \quad \hbox{\rm otherwise}
 \end{array}
 \right.
\end{eqnarray*}
and $\displaystyle {\overline \mu} = \mu - \frac{\mu_0}{2}$. Since ${\overline \mu}$ satisfies 
\begin{eqnarray*} 
\displaystyle   d\mapsto {\overline \mu} (\cdot, \cdot,d) \quad \mbox{is monotone  increasing   on} 
\  \mathbb{R_{+}},
\end{eqnarray*}
\begin{eqnarray*}
\displaystyle  0<\frac{\mu_{0}}{2} \leq {\overline \mu} (o,e,d) \leq \mu_{1} - \frac{\mu_0}{2} 
\ \hbox{\rm for all} \  (o,e,d)\in \mathbb{R}\times\mathbb{R}^{3}\times\mathbb{R}_{+},
\end{eqnarray*}
we infer with  Lemma 1 in \cite{BDP1}  that $\lambda_2 \mapsto {\overline{\mathcal F}}_2 (\cdot, \cdot, \lambda_2)$ is  monotone in $\mathbb{R}^{3\times3}$. Hence
\begin{eqnarray*}
\int_{0}^{T}  \int_{\Omega}  \Big( {\overline {\mathcal F}}_2 \bigl( \theta,{\bf u}_{n} + \upsilon_0 \xi , D( \overline{\upsilon}_{n} + \upsilon_0 \xi) \bigr) - {\overline {\mathcal F}}_2 \bigl(\theta,{\bf u}_{n} + \upsilon_0 \xi , D( \overline{\upsilon} + \upsilon_0 \xi) \bigr) \Big)
 : D( \overline{\upsilon}_{n}-\overline{\upsilon}  ) \, dx dt \ge 0
 \end{eqnarray*}
 and 
\begin{eqnarray*}
\displaystyle 
\bigl[ \mathcal{A}_{{\bf u}_{n}} (\overline{\upsilon}_{n}) -\mathcal{A}_{{\bf u}_{n}} (\overline{\upsilon}) , \overline{\upsilon}_{n}-\overline{\upsilon}  \bigr]
 \ge
  \displaystyle{ \int_{0}^{T}  \int_{\Omega}  \Big( {\overline{\mathcal{F}}_1 } \bigl( D( \overline{\upsilon}_{n} + \upsilon_0 \xi) \bigr)
  - {\overline{\mathcal{F}}_1 } \bigl( D( \overline{\upsilon} + \upsilon_0 \xi) \bigr) \Big)
 : D( \overline{\upsilon}_{n}-\overline{\upsilon}  ) \, dx dt } .
\end{eqnarray*}

 
But for all $(\lambda, \lambda') \in {\mathbb R}^{3 \times 3} \times {\mathbb R}^{3 \times 3}$ we have
\begin{eqnarray} \label{inegp}
\displaystyle \bigl( |\lambda| + |\lambda'| \bigr)^{2-p}  
\bigl( {\overline{\mathcal{F}}_1 } (\lambda) - {\overline{\mathcal{F}}_1 } (\lambda') \bigr)
 : ( \lambda - \lambda' ) 
\ge \mu_0 (p-1)  |\lambda - \lambda' |^2.
\end{eqnarray}
 Indeed if $\lambda = 0_{\mathbb{R}^{3\times3}}$ and/or $\lambda' =0_{\mathbb{R}^{3\times3}}$ the result is obvious. If $|\lambda| = |\lambda'| \not = 0$ we have
\begin{eqnarray*}
\displaystyle \bigl( |\lambda| + |\lambda'| \bigr)^{2-p} 
\bigl( {\overline{\mathcal{F}}_1 } (\lambda) - {\overline{\mathcal{F}}_1 } (\lambda') \bigr)
 : ( \lambda - \lambda' ) 
= \mu_0  2^{2-p}  |\lambda - \lambda' |^2
\end{eqnarray*}
and the conclusion follows from the inequality $2^{2-p} > p-1$ for all $p \in (1,2)$. Otherwise, if $\lambda \not = 0_{\mathbb{R}^{3\times3}}$, $\lambda' \not = 0_{\mathbb{R}^{3\times3}}$ and $|\lambda| \not= |\lambda'|$, without loss of generality we may assume that $|\lambda| > |\lambda'|$ and
we let
\begin{eqnarray*}
G(\lambda, \lambda') = \frac{\bigl( |\lambda| + |\lambda'| \bigr)^{2-p}  
\bigl( {\overline{\mathcal{F}}_1 } (\lambda) - {\overline{\mathcal{F}}_1 } (\lambda') \bigr)
: ( \lambda - \lambda' ) }{ |\lambda - \lambda' |^2}.
\end{eqnarray*}
With the elementary algebraic computations we get
\begin{eqnarray*}
\begin{array}{ll}
\displaystyle G(\lambda, \lambda') = \frac{\mu_0 }{2} \frac{ \bigl( |\lambda| + |\lambda'| \bigr)^{2-p}}{ |\lambda - \lambda' |^2}
\Bigl( \bigl(   |\lambda|^{p-2} + |\lambda '|^{p-2} \bigr) |\lambda - \lambda' |^2 
 +  \bigl(   |\lambda|^2 - | \lambda' |^2 \bigr)  \bigl(   |\lambda|^{p-2} - |\lambda '|^{p-2} \bigr) \Bigr)
\\
\displaystyle
\ge \mu_0 \frac{ \bigl( |\lambda| + |\lambda'| \bigr)^{2-p}}{2} 
\left( \bigl(   |\lambda|^{p-2} + |\lambda '|^{p-2} \bigr) 
 +  \frac{ \bigl(   |\lambda| + | \lambda' | \bigr)  \bigl(   |\lambda|^{p-2} - |\lambda '|^{p-2} \bigr)}{     |\lambda| - | \lambda' | } \right).
 \end{array}
 \end{eqnarray*}
Let $\displaystyle t = \frac{|\lambda| }{ |\lambda'|} >1$. Thus
 \begin{eqnarray*}
\begin{array}{ll}
\displaystyle G(\lambda, \lambda') \ge \mu_0  \frac{ \bigl( 1+t \bigr)^{2-p}}{2} 
\left( (  1+t^{p-2}) 
 +  \frac{ (1+t)  (t^{p-2} -1 )}{  t-1 } \right) \\
 \displaystyle = \mu_0  \frac{  (1+t)^{2-p} (t^{p-1} -1)}{t-1} 
 \ge \mu_0  \frac{t (1-t^{1-p})}{t-1} = \mu_0 \left( 1 - \frac{t^{2-p} -1}{t-1} \right) .
 \end{array}
 \end{eqnarray*}
But, for all $t >1$ we have $\displaystyle \frac{t^{2-p} -1}{t-1} < 2-p$ and (\ref{inegp}) is satisfied.

Hence 
\begin{eqnarray*}
\displaystyle \Bigl( \bigl( |\lambda| + |\lambda'| \bigr)^p \Bigr)^{\frac{2-p}{2}}
 \Bigl( 
 \bigl( {\overline{\mathcal{F}}_1 } (\lambda) - {\overline{\mathcal{F}}_1 } (\lambda') \bigr)
 : ( \lambda - \lambda' ) \Bigr)^{\frac{p}{2}}
\ge \bigl( \mu_0 (p-1) \bigr)^{\frac{p}{2}}  |\lambda - \lambda' |^p.
\end{eqnarray*}
Since $p>1$ we have also 
\begin{eqnarray*}
 \bigl( |\lambda| + |\lambda'| \bigr)^p \le 2^{p-1}  \bigl( |\lambda|^p + |\lambda'|^p \bigr)
 \end{eqnarray*}
 which yields
 \begin{eqnarray*}
\displaystyle  \bigl( |\lambda|^p + |\lambda'|^p \bigr)^{\frac{2-p}{2}}
 \Bigl( 
 \bigl( {\overline{\mathcal{F}}_1 } (\lambda) - {\overline{\mathcal{F}}_1 } (\lambda') \bigr)
 : ( \lambda - \lambda' ) \Bigr)^{\frac{p}{2}}
\ge \frac{ \bigl( \mu_0 (p-1) \bigr)^{\frac{p}{2}}}{ 2^{ \frac{(p-1)(2-p)}{2} } } |\lambda - \lambda' |^p \ge 0
\end{eqnarray*}
for all $(\lambda, \lambda') \in {\mathbb R}^{3 \times 3} \times {\mathbb R}^{3 \times 3}$. 

\bigskip

By replacing $\lambda = D( \overline{\upsilon}_n + \upsilon_0 \xi)$, $\lambda' = D( \overline{\upsilon} + \upsilon_0 \xi)$ and using H\"older's inequality we obtain
\begin{eqnarray*}
\begin{array}{ll}
\displaystyle \frac{ \bigl( \mu_0 (p-1) \bigr)^{\frac{p}{2}}}{ 2^{ \frac{(p-1)(2-p)}{2} } } \int_0^T \int_{\Omega} \bigl| D( \overline{\upsilon}_n  - \overline{\upsilon}) \bigr|^p \, dx dt 
\\
\displaystyle \le \left(\int_0^T  \int_{\Omega} 
\Bigl( 
{\overline{\mathcal{F}}_1 } \bigl( D( \overline{\upsilon}_n + \upsilon_0 \xi) \bigr)  - {\overline{\mathcal{F}}_1 } \bigl( D( \overline{\upsilon} + \upsilon_0 \xi) \bigr) \Bigr)
 : D(  \overline{\upsilon}_n -  \overline{\upsilon}) \, dx dt \right)^{\frac{p}{2}}
  \\
\displaystyle \times \left( \int_0^T \int_{\Omega} \Bigl(  \bigl| D(  \overline{\upsilon}_n + \upsilon_0 \xi ) \bigr|^p +  \bigl| D(  \overline{\upsilon} + \upsilon_0 \xi ) \bigr|^p \Bigr) \, dx  dt \right)^{\frac{2-p}{2}}
\\
\displaystyle 
\le 
 \left(\int_0^T  \int_{\Omega} 
\Bigl( 
{\overline{\mathcal{F}}_1 } \bigl( D( \overline{\upsilon}_n + \upsilon_0 \xi) \bigr) - {\overline{\mathcal{F}}_1 } \bigl( D ( \overline{\upsilon} + \upsilon_0 \xi) \bigr) \Bigr)
 : D(  \overline{\upsilon}_n -  \overline{\upsilon}) \, dx dt \right)^{\frac{p}{2}}
  \\
\displaystyle \times 
\left(   \|   \overline{\upsilon}_n + \upsilon_0 \xi  \|_{L^p(0,T; V^p_{\Gamma_1})}^p 
+  \|   \overline{\upsilon} + \upsilon_0 \xi  \|_{L^p(0,T; V^p_{\Gamma_1})}^p  \right)^{\frac{2-p}{2}}
\end{array}
\end{eqnarray*}
and with (\ref{esom1})
\begin{eqnarray*}
\begin{array}{ll}
\displaystyle \frac{ \bigl( \mu_0 (p-1) \bigr)^{\frac{p}{2}}}{ 2^{ \frac{(p-1)(2-p)}{2} } }
 \bigl\| D( \overline{\upsilon}_n  - \overline{\upsilon}) \bigr\|_{L^p(0,T; (L^p(\Omega))^{3\times 3})}^p  
\\
\displaystyle
\le \left(\int_0^T  \int_{\Omega} 
\Bigl( 
{\overline{\mathcal{F}}_1 } \bigl( D ( \overline{\upsilon}_n + \upsilon_0 \xi)  \bigr) - {\overline{\mathcal{F}}_1 } \bigl( D ( \overline{\upsilon} + \upsilon_0 \xi) \bigr) \Bigr)
 : D(  \overline{\upsilon}_n -  \overline{\upsilon}) \, dx dt \right)^{\frac{p}{2}}
  \\
\times \Bigl( 2 \bigl(C +  \|   \upsilon_0 \xi  \|_{L^p(0,T; V^p_{\Gamma_1})} \bigr)^p  \Bigr)^{\frac{2-p}{2}}.
\end{array}
\end{eqnarray*}
Hence
\begin{eqnarray*}\label{3.5.95}
 \bigl[ \mathcal{A}_{{\bf u}_{n}} (\overline{\upsilon}_{n}) -\mathcal{A}_{{\bf u}_{n}}(\overline{\upsilon}) , \overline{\upsilon}_{n}-\overline{\upsilon}  \bigr]
\geq 
 \frac{  \mu_0 (p-1)  }{  2^{ (2-p) }  
  \bigl( C + \|  \upsilon_0 \xi  \|_{ L^p (0,T; V^p_{\Gamma_1} ) } \bigr)^{ (2-p)    } }
 \Vert D(\overline{\upsilon}_{n}-\overline{\upsilon})\Vert_{ L^p (0,T; (L^p (\Omega) )^{3\times 3} ) }^2   .
\end{eqnarray*}

On the other hand,
\begin{eqnarray*}
\begin{array}{ll}
\displaystyle \bigl[ \mathcal{A}_{{\bf u} } (\overline{\upsilon}) -\mathcal{A}_{{\bf u}_{n}} (\overline{\upsilon} ) , \overline{\upsilon}_{n}-\overline{\upsilon}  \bigr] \\
\displaystyle 
= \int_{0}^{T} \int_{\Omega} \Big( {\overline {\mathcal F}}_2 \bigl(\theta,{\bf u} + \upsilon_0 \xi , D( \overline{\upsilon} + \upsilon_0 \xi) \bigr) - {\overline {\mathcal F}}_2 \bigl(\theta,{\bf u}_{n} + \upsilon_0 \xi , D( \overline{\upsilon} + \upsilon_0 \xi) \bigr) \Big)
 : D( \overline{\upsilon}_{n}-\overline{\upsilon}  ) \, dx dt \\
 \displaystyle 
 \le \bigl\| {\overline {\mathcal F}}_2 \bigl(\theta,{\bf u} + \upsilon_0 \xi , D( \overline{\upsilon} + \upsilon_0 \xi) \bigr) - {\overline {\mathcal F}}_2 \bigl(\theta,{\bf u}_{n} + \upsilon_0 \xi , D( \overline{\upsilon} + \upsilon_0 \xi) \bigr) \bigr\|_{L^{p'}(0,T; (L^{p'}(\Omega))^{3 \times 3})} 
\\
\displaystyle \times  \Vert D(\overline{\upsilon}_{n}-\overline{\upsilon})\Vert_{L^p(0,T; (L^p(\Omega))^{3 \times 3})}
 \end{array}
 \end{eqnarray*}
and we get
\begin{eqnarray*}
\begin{array}{ll}
\displaystyle \bigl\| {\overline {\mathcal F}}_2 \bigl(\theta,{\bf u} + \upsilon_0 \xi , D( \overline{\upsilon} + \upsilon_0 \xi) \bigr) - {\overline {\mathcal F}}_2\bigl(\theta,{\bf u}_{n} + \upsilon_0 \xi , D( \overline{\upsilon} + \upsilon_0 \xi) \bigr) \bigr\|_{L^{p'}(0,T; (L^{p'}(\Omega))^{3 \times 3})} 
\\
\displaystyle \times  \Vert D(\overline{\upsilon}_{n}-\overline{\upsilon})\Vert_{L^p(0,T; (L^p(\Omega))^{3 \times 3})} \\
 \displaystyle
 \le \frac{1}{4 \epsilon} \bigl\| {\overline {\mathcal F}} \bigl(\theta,{\bf u} + \upsilon_0 \xi , D( \overline{\upsilon} + \upsilon_0 \xi) \bigr) - {\overline {\mathcal F}} \bigl(\theta,{\bf u}_{n} + \upsilon_0 \xi , D( \overline{\upsilon} + \upsilon_0 \xi) \bigr) \bigr\|_{L^{p'}(0,T; (L^{p'}(\Omega))^{3 \times 3})}^{2} 
\\
\displaystyle + \epsilon \Vert D(\overline{\upsilon}_{n}-\overline{\upsilon})\Vert_{L^p(0,T; (L^p(\Omega))^{3 \times 3})}^2
\end{array}
\end{eqnarray*}
where 
we choose 
$\displaystyle \epsilon = \frac{1}{2}  \frac{  \mu_0 (p-1)  }{  2^{  (2-p)  }  
  \bigl( C + \|  \upsilon_0 \xi  \|_{ L^p (0,T; V^p_{\Gamma_1} ) } \bigr)^{ (2-p)   } }$.
We obtain 
 \begin{eqnarray*}\label{3.5.97}
\begin{array}{ll}
\displaystyle{\frac{1}{2}  \Vert\overline{\upsilon}_{n}(T)-\overline{\upsilon}(T)\Vert_{\mathbf{L}^2(\Omega)}^2 
+\epsilon (C_{Korn, p})^2 \Vert \overline{\upsilon}_{n}-\overline{\upsilon} \Vert_{ L^p(0,T; {\bf L}^p(\Omega))}^2 } \\
\displaystyle 
\le \frac{1}{4 \epsilon} \bigl\| {\overline {\mathcal F}}_2 \bigl(\theta,{\bf u} + \upsilon_0 \xi , D( \overline{\upsilon} + \upsilon_0 \xi) \bigr) - {\overline {\mathcal F}}_2 \bigl(\theta,{\bf u}_{n} + \upsilon_0 \xi , D( \overline{\upsilon} + \upsilon_0 \xi) \bigr) \bigr\|_{L^{p'}(0,T; (L^{p'}(\Omega))^{3 \times 3})}^{2} .
\end{array}
\end{eqnarray*}
The sequence  $({\bf u}_{n})_{n \ge 0}$ converges to $ {\bf u}$ strongly in $L^p(0,T;{\bf L}^p(\Omega))$. So, by possibly extracting a subsequence still denoted $({\bf u}_{n})_{n \ge 0}$, we have 
\begin{eqnarray*}
{\bf u}_{n} (t,x) \longrightarrow {\bf u} (t,x) \quad \forall \  {\rm a.a.} \  (x,t) \in (0,T) \times \Omega.
\end{eqnarray*}
By using the continuity and the uniform boundedness of the mapping $\overline{\mu}$, we infer from Lebesgue's theorem that 
\begin{eqnarray*}
{\overline {\mathcal F}}_2 \bigl(\theta,{\bf u}_n + \upsilon_0 \xi , D( \overline{\upsilon} + \upsilon_0 \xi) \bigr) \longrightarrow  {\overline {\mathcal F}}_2  \bigl(\theta,{\bf u} + \upsilon_0 \xi , D( \overline{\upsilon} + \upsilon_0 \xi) \bigr) \quad \hbox{\rm strongly in $L^{p'} \bigl(0,T; (L^{p'}(\Omega))^{3 \times 3} \bigr)$.}
\end{eqnarray*}
%
%
It follows that   
\begin{eqnarray*}
\lim_{n\rightarrow+\infty}\Vert \overline{\upsilon}_{n}-\overline{\upsilon}\Vert_{L^p(0,T;{\bf L}^p(\Omega))} = 0.
\end{eqnarray*}
 So,   any subsequence  $(\overline{\upsilon}_{n})_{n\geq 0}$ which is strongly convergent in $L^p \bigl( 0,T;{\bf L}^p(\Omega) \bigr)$ converges to $\overline{\upsilon}= \Lambda({\bf u}) $. Recalling that the whole sequence $(\overline{\upsilon}_{n})_{n\geq 0}$ is bounded in $L^p \bigl(0,T;{\bf L}^p(\Omega) \bigr)$, we infer that the whole sequence $(\overline{\upsilon}_{n})_{n\geq 0}$ converges to  $\overline{\upsilon}=\Lambda({\bf u})$  which proves the continuity of the mapping  $\Lambda$.
 
 \bigskip
 
 With Schauder's  fixed point theorem, we may conclude that  $\Lambda$  admits a fixed point ${\bf u} \in L^p \bigl(0,T;{\bf L}^p(\Omega) \bigr)$. Let us denote $\Lambda({\bf u})=\textbf{u}= \overline{\upsilon}$. We obtain that $\overline{\upsilon}\in C \bigl(0,T;\textbf{L}^{2}(\Omega) \bigr)\cap L^p \bigl(0,T;V^p_{0.div} \bigr)$  with   $\displaystyle \frac{\partial \overline{\upsilon}}{\partial t} \in L^{p'} \bigl(0,T;(V^p_{0.div})' \bigr)$ and satisfies
 \begin{eqnarray} \label{3.17}
\begin{array}{ll}
\displaystyle 
 \int_0^T \Big< \frac{\partial \overline{\upsilon} }{\partial t}, {\overline{\varphi}}  \Big>_{ (V^{p}_{0.div})', V^{p}_{0.div} }    \, dt 
+  \bigl[\mathcal{A}_{\overline{\upsilon}} (\overline{\upsilon} ) , {\overline{\varphi}}  \bigr]
\medskip\\
\displaystyle{
+ J( \overline{\upsilon} + {\overline{\varphi}} )-J(\overline{\upsilon})
\geq\,\, \int_{0}^{T} \left( f + \frac{\partial \xi}{\partial t} \upsilon_0, {\overline{\varphi}}  \right)_{{\bf L}^2(\Omega)} \,dt} 
 \quad \forall {\overline{\varphi}} \in L^{p} \bigl(0,T; V^{p}_{0.div} \bigr)
 \end{array}
 \end{eqnarray}
 with $\overline{\upsilon} (0) = 0$.

 
 \bigskip
 
In order to conclude the study of problem (P) it remains now to construct the pressure term. As usual the key tool is De Rham's theorem.

\bigskip


Reminding that $\overline{\upsilon} \in C\bigl( [0,T]; {\bf L}^2(\Omega) \bigr)$ we may define $F(t) \in \bigl(V_0^p \bigr)'$ for all $t \in [0,T]$ by
\begin{eqnarray*}
\begin{array}{ll}
\displaystyle
 F(t)(\tilde\vartheta)= \left( \int_{0}^{t} \overline{f}\,d \tilde t  , \tilde \vartheta \right)_{{\bf L}^2(\Omega)}
 - \bigl(\overline{\upsilon}(t),  \tilde \vartheta \bigr)_{{\bf L}^2(\Omega)} 
 \\
 \displaystyle
 - \int_{0}^{t} \int_{\Omega} {\mathcal F} \bigl(\theta, \overline{\upsilon} + \upsilon_0 \xi, D(\overline{\upsilon} + \upsilon_0 \xi ) \bigr): D( \tilde \vartheta) \, dx d\tilde t \quad \forall \tilde \vartheta \in V_0^p.
\end{array}
\end{eqnarray*}
The mapping $F$ belongs to $C\bigl( [0,T]; (V_0^p)' \bigr)$. Moreover for all $\tilde\vartheta\in W^{1,p}_{0.div}$  and for all $t \in [0,T]$ we may consider $\vartheta = \tilde \vartheta {\bf 1}_{| [0,t]} \in L^p \bigl(0,T; V_{0.div}^p \bigr)$ and with $\overline{\varphi}=  \pm\vartheta$ in the variational inequality  (\ref{3.17}) we obtain
\begin{eqnarray*}\label{3.59}
\begin{array}{ll}
\displaystyle{
\int_{0}^{T} \Big< \frac{\partial \overline{\upsilon} }{\partial t}, \vartheta  \Big>_{ (V^{p}_{0.div})', V^{p}_{0.div} }    \, dt 
+\int_{0}^{T} \int_{\Omega} {\mathcal F} \bigl(\theta, \overline{\upsilon} + \upsilon_0 \xi, D(\overline{\upsilon} + \upsilon_0 \xi ) \bigr): D(  \vartheta) 
\, dx dt }
\\
\displaystyle{=\int_{0}^{T} ( \overline{f},\vartheta)_{{\bf L}^2(\Omega) } \,dt}
\end{array}
\end{eqnarray*}
which implies that $F(t) (\tilde \vartheta) =0$.
We infer that, for all $t \in [0,T]$, there exists a unique distribution $\tilde\pi(t)\in L^{p'}_0(\Omega)$  such that
\begin{eqnarray*}\label{con d'}
 F(t)=\nabla  \tilde \pi(t)
\end{eqnarray*}
(see for instance Lemma 2.7 in  \cite{1'}). Since the gradient operator is an endomorphism from $L^{p'}_{0}(\Omega)$   into  $\textbf{W}^{-1,p'}(\Omega)$ (see Corollary 2.5 in \cite{1'}), we obtain that $\nabla \tilde \pi\in C \bigl([0,T];\textbf{W}^{-1,p'}(\Omega) \bigr)$ and  
$\tilde\pi\in C \bigl([0,T];L_{0}^{p'}(\Omega) \bigr)$. Then, for all $t \in [0,T]$ and for all $\tilde \vartheta\in {\bf D} (\Omega)=(\mathcal{D}(\Omega))^3$, we have 
\begin{eqnarray*}
\begin{array}{ll}
\displaystyle  F(t)(\tilde\vartheta)= \left(  \int_{0}^{t} \overline{f}\,d\tilde t, \tilde \vartheta \right)_{{\bf L}^2(\Omega)}
 - \bigl(\overline{\upsilon}(t),  \tilde \vartheta \bigr)_{ {\bf L}^2(\Omega)}
 \\
 \displaystyle -\int_{0}^{t} \int_{\Omega} {\mathcal F} \bigl(\theta, \overline{\upsilon} + \upsilon_0 \xi, D(\overline{\upsilon} + \upsilon_0 \xi ) \bigr): D( \tilde \vartheta) \, dx d\tilde t \\
 \displaystyle = \bigl\langle \nabla  \tilde \pi(t),\tilde\vartheta \bigr\rangle_{ {\bf D}' (\Omega),{\bf  D} (\Omega)}\quad \forall \tilde\vartheta\in {\bf D} (\Omega)  
 \end{array}
 \end{eqnarray*}
 and with  Green's formula 
 \begin{eqnarray*}
 \begin{array}{ll}
\displaystyle  \left( \int_{0}^{t} \overline{f}\, d \tilde t  , \tilde \vartheta \right)_{{\bf L}^2(\Omega)}
 - \bigl(\overline{\upsilon}(t),  \tilde \vartheta \bigr)_{{\bf L}^2(\Omega)}-\int_{0}^{t} \int_{\Omega} {\mathcal F} \bigl(\theta, \overline{\upsilon} + \upsilon_0 \xi, D(\overline{\upsilon} + \upsilon_0 \xi ) \bigr): D( \tilde \vartheta) \, dx d \tilde t  \\
 \displaystyle 
 =- \bigl\langle \tilde \pi(t),div(\tilde \vartheta ) \bigr\rangle_{{\mathcal D}'(\Omega), {\mathcal D} (\Omega)}
 = - \int_{\Omega}  \tilde \pi(t) div(\tilde \vartheta) \, dx.
 \end{array}
\end{eqnarray*} 
The density of  ${\bf D} (\Omega)$ in  $ \mathbf{W}^{1,p}_{0}(\Omega)$ yields the same equality  for all   $\tilde \vartheta \in   \mathbf{W}^{1,p}_{0}(\Omega)$. 
By deriving with respect to $t$ we get
 \begin{eqnarray*} \label{3.63}
 \begin{array}{ll}
\displaystyle  ( \overline{f}, \tilde \vartheta )_{{\bf L}^2(\Omega)}
 - \frac{\partial}{\partial t} \bigl(\overline{\upsilon},  \tilde \vartheta \bigr)_{{\bf L}^2(\Omega)}
 - \int_{\Omega} {\mathcal F} \bigl(\theta, \overline{\upsilon} + \upsilon_0 \xi, D(\overline{\upsilon} + \upsilon_0 \xi ) \bigr): D( \tilde \vartheta) \, dx
\\
\displaystyle  =-  \int_{\Omega}  \frac{\partial \tilde \pi}{\partial t} div(\tilde \vartheta)  \, dx
\quad \hbox{\rm in ${\mathcal D}'(0,T)$}
\end{array}
\end{eqnarray*} 
and we define $\displaystyle \pi = \frac{\partial \tilde \pi}{ \partial t} \in {\mathcal D}' \bigl(0,T; L^{p'}_0 (\Omega) \bigr)$.


\bigskip

Hence, for all $\zeta\in \mathcal{D}(0,T)$ and $\tilde \vartheta\in {\bf D} (\Omega)=(\mathcal{D}(\Omega))^3$, we have
\begin{eqnarray*}\label{3.1.67.000}
\begin{array}{ll}
\displaystyle{ \int_{0}^{T}\frac{ \partial }{\partial t}(\overline{\upsilon},\tilde \vartheta )_{ {\bf L}^2(\Omega)}\zeta \,dt
+\int_{0}^{T}  \int_{\Omega} {\mathcal F} \bigl(\theta, \overline{\upsilon} + \upsilon_0 \xi, D(\overline{\upsilon} + \upsilon_0 \xi ) \bigr): D( \tilde \vartheta \zeta) \, dx dt } \\
\displaystyle{ - \left\langle \int_{\Omega}  \pi div(\tilde \vartheta) \, dx , \zeta \right\rangle_{{\mathcal D}'(0,T), {\mathcal D}(0,T)} }
\displaystyle{ =\int_{0}^{T} (\overline{f},\tilde \vartheta)_{{\bf L}^2(\Omega) } \zeta\,dt.}
\end{array}
\end{eqnarray*}
But, for all $\omega\in L_{0}^{p}(\Omega) $ we know that there exists $\tilde \vartheta \in   \mathbf{W}^{1,p}_{0}(\Omega)$ such that 
\begin{eqnarray*}
div(\tilde \vartheta)=\omega
 \end{eqnarray*}
 and the mapping $P_p : L_{0}^{p}(\Omega) \to \mathbf{W}^{1,p}_{0}(\Omega)$ given by $\tilde \vartheta= P_p (\omega)$ is linear and continuous (see Corollary 3.1 in \cite{1'}). So for all $\omega\in L_{0}^{p}(\Omega) $ we have
\begin{eqnarray*}\label{3.1.67.0000}
\begin{array}{ll}
\displaystyle \left\langle \int_{\Omega}  \pi \omega \, dx, \zeta \right\rangle_{{\mathcal D}'(0,T), {\mathcal D}(0,T)}
= 
- \int_{0}^{T} \bigl(\overline{\upsilon},P_p(\omega) \bigr)_{ {\bf L}^2(\Omega)}\zeta' \,dt 
\\
\displaystyle
+\int_{0}^{T}  \int_{\Omega} {\mathcal F} \bigl(\theta, \overline{\upsilon} + \upsilon_0 \xi, D(\overline{\upsilon} + \upsilon_0 \xi ) \bigr): D \bigl( P_p(\omega) \zeta \bigr) \, dx dt 
-\int_{0}^{T}  ( \overline{f},P_p(\omega) )_{{\bf L}^2(\Omega) }\zeta\, dt
\end{array}
\end{eqnarray*}
and
\begin{eqnarray}\label{3.1.67}
\begin{array}{ll}
\displaystyle \left| \left\langle \int_{\Omega}  \pi \omega \, dx, \zeta \right\rangle_{{\mathcal D}'(0,T), {\mathcal D}(0,T)} \right| 
\le 
\Vert\overline{\upsilon}\Vert_{L^2(0,T;{\bf L}^2(\Omega))}\Vert P_p(\omega) \zeta'\Vert_{L^2(0,T;{\bf L}^2(\Omega))}
\\ 
\displaystyle + 2 \mu_{1} T^{\frac{1}{p}} 
\bigl( 
\Vert\overline{\upsilon}\Vert_{L^p(0,T;V^p_{0.div})}
+  \Vert D(\upsilon_{0} \xi) \Vert_{L^p(0,T; (L^p(\Omega)^{3\times 3}) } ) 
\bigr)^{p-1}
\Vert P_p(\omega) \zeta \Vert_{L^{\infty}(0,T;V^p_{0} )}
\\
\displaystyle + T^{\frac{1}{p}} \Vert\overline{f}\Vert_{L^{p'}(0,T; {\bf L}^2(\Omega) )}
\Vert P_p (\omega)\zeta \Vert_{L^{\infty}(0,T;{\bf L}^2(\Omega) )}.
\end{array}
\end{eqnarray}
By using the continuous embedding of $\mathbf{W}^{1,p}_{0}(\Omega)$ into ${\bf L}^2(\Omega)$ and $H^1_0(0,T; \mathbb{R})$ into $L^{\infty} (0,T; \mathbb{R} )$ we infer from (\ref{3.1.67}) that there exists a positive real number $C^*$, independent of $\omega$ and $\zeta$, such that 
\begin{eqnarray*}
 \displaystyle \left| \left\langle \int_{\Omega}  \pi \omega \, dx, \zeta \right\rangle_{{\mathcal D}'(0,T), {\mathcal D}(0,T)} \right| 
\le  C^* \Vert \omega \zeta \Vert_{H^{1}(0,T;L^p(\Omega) )} \qquad \forall \omega \in L^p_0(\Omega), \ \forall \zeta \in {\mathcal D} (0,T).
\end{eqnarray*}
Then, for all $\omega^*\in L^{p}(\Omega)$, we may apply the previous inequality with 
\begin{eqnarray*}
\omega=\omega^* -\frac{1}{{\rm meas} (\Omega)}\int_{\Omega}\omega^* \,dx  \quad \in L^p_0(\Omega).
\end{eqnarray*}
Since $\pi \in {\mathcal D}' \bigl(0,T; L^{p'}_0 (\Omega) \bigr)$ we have  
\begin{eqnarray*}
\begin{array}{ll}
\displaystyle \left\langle \int_{\Omega}  \pi \omega \, dx, \zeta \right\rangle_{{\mathcal D}'(0,T), {\mathcal D}(0,T)} 
\displaystyle =  \left\langle \int_{\Omega}  \pi \left( \omega^* -\frac{1}{{\rm meas} (\Omega) }\int_{\Omega}\omega^* \,dx  \right) \, dx, \zeta \right\rangle_{{\mathcal D}'(0,T), {\mathcal D}(0,T)}
 \\
\displaystyle
 = \left\langle \int_{\Omega}  \pi \omega^* \, dx, \zeta \right\rangle_{{\mathcal D}'(0,T), {\mathcal D}(0,T)}
 \end{array}
 \end{eqnarray*}
 and  
\begin{eqnarray*} \label{krahtbazf}
\Vert \omega\Vert_{L^p(\Omega)}
\leq 2 \Vert \omega^*\Vert_{L^p(\Omega)} .
\end{eqnarray*}
So, for all $\omega^*\in L ^{p}(\Omega)$ and   $ \zeta\in \mathcal{D}(0,T)$ we obtain
\begin{eqnarray*}
 \displaystyle \left| \left\langle \int_{\Omega}  \pi \omega^* \, dx, \zeta \right\rangle_{{\mathcal D}'(0,T), {\mathcal D}(0,T)} \right| 
\le  2 C^* \Vert \omega^* \zeta \Vert_{H^{1}(0,T;L^p(\Omega) )}.
\end{eqnarray*}
Finally the density of   $\mathcal{D}(0,T)\otimes L^p(\Omega)$ into  $H_{0}^{1}(0,T;L^{p}(\Omega))$     allows us to 
conclude that $\pi\in H^{-1} \bigl(0,T;L_{0}^{p'}(\Omega) \bigr)$.


\bigskip

Now let $\tilde \vartheta \in {\bf D}(\Omega) = \bigl( {\mathcal D}(\Omega) \bigr)^3$ and $\zeta \in {\mathcal D}(0,T)$. 
We have 
\begin{eqnarray*}\label{3.1.67.0}
\begin{array}{ll}
\displaystyle{ \int_0^T ({\overline f}, \tilde \vartheta)_{ {\bf L}^2(\Omega)} \zeta \,dt 
 - \int_{0}^{T} \frac{ \partial }{\partial t}(\overline{\upsilon},\tilde \vartheta )_{ {\bf L}^2(\Omega) }\zeta\,dt}
\\
 \displaystyle{
= \int_{0}^{T}  \int_{\Omega} {\mathcal F} \bigl(\theta, \overline{\upsilon} + \upsilon_0 \xi, D(\overline{\upsilon} + \upsilon_0 \xi ) \bigr): D( \tilde \vartheta \zeta) \, dx dt 
 + \int_0^T \bigl\langle \nabla \pi , \tilde \vartheta \bigr\rangle_{{\bf D}'(\Omega), {\bf D}(\Omega)}  \zeta \, dt }
\end{array}
\end{eqnarray*}
and with Green's formula
\begin{eqnarray*}\label{3.1.67.00}
\begin{array}{ll}
\displaystyle{\int_0^T ({\overline f}, \tilde \vartheta) _{ {\bf L}^2(\Omega)} \zeta\,dt 
 - \int_{0}^{T} \frac{ \partial }{\partial t}(\overline{\upsilon},\tilde \vartheta )_{ {\bf L}^2(\Omega) }\zeta\,dt}
 \displaystyle{
= -  \int_0^T \bigl\langle div (\sigma) , \tilde \vartheta \bigr\rangle_{ {\bf D}'(\Omega), {\bf D}(\Omega)}  \zeta \, dt 
}
\end{array}
\end{eqnarray*}
where $\sigma = {\mathcal F} \bigl(\theta, \overline{\upsilon} + \upsilon_0 \xi, D(\overline{\upsilon} + \upsilon_0 \xi ) \bigr) - \pi {\rm Id}$.
It follows that
\begin{eqnarray*}
\begin{array}{ll}
\displaystyle \left|\int_{0}^{T} \bigl\langle  div(\sigma) ,\tilde \vartheta \bigr\rangle_{ {\bf D}'(\Omega),{\bf D}(\Omega)}\zeta\,dt\right|
  \\
\displaystyle \leq 
(  C_{\infty} T^{\frac{1}{p}}   \Vert \overline{f}\Vert_{  L^{p'}(0,T;{\bf L}^2(\Omega))}+ \Vert\overline{\upsilon}\Vert_{  L^2(0,T;{\bf L}^2(\Omega))} ) 
  \Vert\tilde \vartheta \zeta\Vert_{  H^{1}(0,T;{\bf L}^2(\Omega))}
  \end{array}
\end{eqnarray*}
where  we recall that $C_{\infty}$  denotes  the norm of the continuous injection   of    $H^{1}(0,T; \mathbb{R})$  into $L^{\infty}(0,T ; \mathbb{R})$.
By density of  $\mathcal{D}(0,T)\otimes {\bf D}(\Omega)$ into  $H_{0}^{1} \bigl(0,T;{\bf L}^{2}(\Omega) \bigr)$   we may conclude that   $div(\sigma)\in H^{-1} \bigl(0,T; {\bf L}^{2}(\Omega) \bigr)$ and we have  
\begin{eqnarray*} \label{smai5'0}
\begin{array}{ll}
\displaystyle{\int_{0}^{T} (\overline{f},\tilde \vartheta )_{ {\bf L}^2(\Omega)}\zeta \,dt
- \int_{0}^{T}\frac{ \partial }{\partial t}(\overline{\upsilon},\tilde \vartheta )_{ {\bf L}^2(\Omega)}\zeta \,dt}
\\
\displaystyle{=-\int_{0}^{T}\int_{\Omega} div(\sigma)  \tilde \vartheta \zeta \,dx dt 
\qquad\forall \tilde \vartheta \in {\bf L}^2(\Omega),\  \forall \zeta\in \mathcal{D}(0,T) .}
\end{array}
 \end{eqnarray*}
Next we observe that   $ \sigma\in H^{-1} \bigl(0,T;\widetilde{\mathcal{Y}} \bigr)$ where
\begin{eqnarray*}
\widetilde{ \mathcal{Y} }= \Bigl\{ \tilde \sigma \in \bigl({L}^{2} (\Omega) \bigr)^{3 \times 3} ; \ div(\tilde \sigma) \in {\bf L}^{2}(\Omega) \Bigr\}.
\end{eqnarray*}
It follows that we may use Green's formula and we get 
\begin{eqnarray*}
\displaystyle - \int_{0}^{T}\int_{\Omega} div(\sigma)  \tilde \vartheta \zeta \,dx dt 
\displaystyle = 
\int_{0}^{T}\int_{\Omega} \sigma : \nabla  \tilde \vartheta \zeta \,dx dt - \int_0^T \int_{\partial \Omega} \sum_{i,j=1}^3 \sigma_{ij} n_j \tilde \vartheta_i \zeta \, dY dt
\end{eqnarray*}
for all $\tilde \vartheta \in {\bf W}^{1,2}(\Omega)$ and for all $\zeta \in {\mathcal D}(0,T)$.

\smallskip

Therefore, for all $\tilde \vartheta \in  V_0^2 \subset V^p_{0}$ and for all $\zeta \in {\mathcal D}(0,T)$ we get
 \begin{eqnarray}\label{sbon}
 \begin{array}{ll}
\displaystyle{\int_{0}^{T}\frac{ \partial }{\partial t}(\overline{\upsilon},\tilde \vartheta )_{ {\bf L}^2(\Omega)} \zeta \,dt
+ \int_{0}^{T}  \int_{\Omega} {\mathcal F} \bigl(\theta, \overline{\upsilon} + \upsilon_0 \xi, D(\overline{\upsilon} + \upsilon_0 \xi ) \bigr): D( \tilde \vartheta \zeta) \, dx dt }
\\
\displaystyle{-\int_{0}^{T} \int_{\Omega} \pi div(\tilde \vartheta) \zeta \, dx dt
+ J(\overline{\upsilon}+\tilde \vartheta \zeta) - J(\overline{\upsilon})  = \int_{0}^{T}( \overline{f},\tilde \vartheta )_{{\bf L}^2(\Omega)}\zeta \,dt +  A(\overline{\upsilon},\tilde \vartheta )}
\end{array}
 \end{eqnarray}
  where
 \begin{eqnarray*}
  A(\overline{\upsilon},\tilde \vartheta )= \int_{0}^{T} \int_{\Gamma_0} (\tilde \sigma_{\tau} \cdot \tilde\vartheta)  \zeta \,dx' dt \  +J( \overline{\upsilon}+\tilde \vartheta \zeta)- J(\overline{\upsilon}) .
 \end{eqnarray*}
 But
 \begin{eqnarray*}
 \int_{\partial\Omega} \tilde \vartheta\cdot n\,dY=0 \qquad \forall \tilde \vartheta \in V^2_{0}.
 \end{eqnarray*}
We infer that there exists  $\hat  \vartheta\in \textbf{W}^{1,2}(\Omega)$ satisfying
 \begin{eqnarray*}
 div(\hat \vartheta)=0\quad \mbox{in} \ \Omega, \quad \hat \vartheta=\tilde  \vartheta \quad\mbox{on} \ \partial\Omega
 \end{eqnarray*}
 (see for instance Lemma  3.3 in \cite{1'} or   Chapter 1, Lemma 2.2 in \cite{3'}).
 Hence  $\hat \vartheta\in V^2_{0.div}  \subset V^p_{0.div} $  and with  $\overline{\varphi}=\hat  \vartheta \zeta$ in   (\ref{3.17}) we obtain
\begin{eqnarray*}
\begin{array}{ll}
\displaystyle \int_{0}^{T}\frac{ \partial }{\partial t}(\overline{\upsilon},\hat  \vartheta )_{ {\bf L}^2(\Omega)}\zeta\,dt
+\int_{0}^{T}  \int_{\Omega} {\mathcal F} \bigl(\theta, \overline{\upsilon} + \upsilon_0 \xi, D(\overline{\upsilon} + \upsilon_0 \xi ) \bigr): D( \tilde \vartheta \zeta) \, dx dt \\
\displaystyle 
+ J( \overline{\upsilon}+\hat  \vartheta \zeta)- J(\overline{\upsilon})  
- \int_{0}^{T} (\overline{f},\hat  \vartheta \zeta)_{{\bf L}^2(\Omega) } \,dt \geq 0
\end{array}
 \end{eqnarray*}
and thus
$  A(\overline{\upsilon},\hat  \vartheta )\geq 0$.
 By observing that   $ A(\overline{\upsilon},\hat  \vartheta )= A(\overline{\upsilon},\tilde  \vartheta )$  since $\hat  \vartheta=\tilde  \vartheta$ on $\partial\Omega$  we get
  $ A(\overline{\upsilon},\tilde  \vartheta )\geq 0$
 and   (\ref{sbon}) becomes
  \begin{eqnarray*}
 \begin{array}{ll}
\displaystyle{ \int_{0}^{T} \frac{ \partial }{\partial t} (\overline{\upsilon},\tilde \vartheta )_{ {\bf L}^2(\Omega) } \zeta \,dt
+ \int_{0}^{T}  \int_{\Omega} {\mathcal F} \bigl(\theta, \overline{\upsilon} + \upsilon_0 \xi, D(\overline{\upsilon} + \upsilon_0 \xi ) \bigr): D( \tilde \vartheta \zeta) \, dx dt }
\\
\displaystyle{-\int_{0}^{T} \int_{\Omega} \pi div(\tilde \vartheta) \zeta \, dx dt
+ J(\overline{\upsilon}+\tilde \vartheta \zeta) - J(\overline{\upsilon}) \ge \int_{0}^{T}( \overline{f},\tilde \vartheta )_{{\bf L}^2(\Omega)}\zeta \,dt 
\quad \forall \tilde \vartheta \in V_0^2, \   \forall \zeta \in \mathcal D(0,T).}
\end{array}
 \end{eqnarray*}
 By density of $V_0^2$ into $V_0^p$ the same inequality holds for all $\tilde \vartheta \in V_0^p$ and $\zeta \in {\mathcal D} (0,T)$ and we may conclude that   $ (\overline{\upsilon},\pi )$  is a  solution   of problem (P).

\end{proof}

\bigskip

\begin{remark} If the vicosity $\mu$ does not depend on the velocity, then Theorem \ref{existence_Pu} yields directly the existence and uniqueness of a solution to problem (P).
\end{remark}

\bigskip




 \end{document}